\newtheorem{definition}{Definition}[section]
\newtheorem{theorem}{Theorem}[section]
\newtheorem{lemma}[theorem]{Lemma}
\newtheorem{corollary}[theorem]{Corollary}
\newtheorem{proposition}[theorem]{Proposition}
\theoremstyle{remark}
\newtheorem{remark}[theorem]{Remark}
\numberwithin{equation}{section}
\numberwithin{theorem}{section}
\numberwithin{figure}{section}
\newcommand{\norm}[1]{\left\|#1\right\|}
\newcommand{\intd }{\,{\rm d}}
\DeclareMathOperator{\dtd}{\frac{d}{d{\emph t}}}
\newcommand{\ZZ}{\mathbb{Z}}
\newcommand{\NN}{\mathbb{N}}
\newcommand{\RR}{\mathbb{R}}
\DeclareMathOperator{\Div}{div}
\begin{document}
\title[Global well-posedness for the 2D Maxwell-Navier-Stokes equations]{Global well-posedness for the two-dimensional Maxwell-Navier-Stokes equations}

\author[C. Miao]{Changxing Miao}
\address[C. Miao]{Institute of Applied Physics and Computational Mathematics,
        P.O. Box 8009, Beijing 100088, P.R. China.}

\email{miao\_{}changxing@iapcm.ac.cn}

\author[X. Zheng]{Xiaoxin Zheng}

\address[X. Zheng]{School of Mathematics and Systems Science, Beihang University, Beijing 100191, P.R. China}

\email{xiaoxinzheng@buaa.edu.cn}

\date{\today}
\subjclass[2010]{35Q30; 35B40;  76D05. }
\keywords{Maxwell-Navier--Stokes equations;  weak solutions; uniqueness; Localization.}

\begin{abstract}
In this paper, we investigate Cauchy problem of the two-dimensional full Maxwell-Navier-Stokes system, and prove the global-in-time existence and uniqueness of solution  in the borderline space which is very close to $L^2$-energy space by developing the new estimate of $\sup_{j\in\ZZ}2^{2j}\int_0^t\sum_{k\in\mathbb{Z}^2}\big\|\sqrt{\phi_{i,k}}u(\tau)\big\|^2_{L^2(\RR^2)}\intd\tau<\infty$.
This  solves  the open problem in the framework of borderline space  purposed by Masmoudi in \cite{Masmoudi-10}.

\end{abstract}

\maketitle
\section{Introduction}\label{Intr}
\setcounter{section}{1}\setcounter{equation}{0}
We consider a coupled system of equations consisting of the Navier-Stokes equations of fluid dynamics and Maxwell¡¯s equations of electromagnetism. The coupling comes from the Lorentz force in the fluid equation and the electric current in the Maxwell equations which takes the following form
 \begin{equation}\label{eq.MNS}
 \left\{\begin{array}{ll}
 u_t+(u\cdot\nabla)u-\nu\Delta u+\nabla\pi= j\times B\quad (t,x)\in\RR^+\times\RR^2,\\
 E_t-\mathrm{curl}\,B=-j,\\
 B_t+\mathrm{curl}\,E=0,\\
 \Div u=\Div B=0,\\
 j=\sigma(E+u\times B).
 \end{array}\right.
 \end{equation}
System \eqref{eq.MNS} should be supplemented with an initial condition
\[u(0,x)=u_0(x),\quad B(0,x)=B_0(x),\quad E(0,x)=E_0(x),\]
where $u_0(x)$ and $B_0(x)$ satisfy $\Div u_0=\Div B_0=0$. Here, $u=(u^1,u^2,u^3)(t,x_1,x_2)$ stands for velocity of the fluid.  $E=(E^1,E^2,E^3)(t,x_1,x_2)$ and $B=(B^1,B^2,B^3)(t,x_1,x_2)$  electric field and magnetic field, respectively.
The scalar function $\pi$ is the pressure which can be recovered  at least formally by $u$ and $j\times B$ via Calder\'on-Zygmund operators, that is,
\[\pi=-\mathbb{P}\big((u\cdot\nabla)u-(j\times B)\big),\]
where $\mathbb{P}$ is the Leray projector. $j$ is the electric current which is given by Ohm's law and $j\times B$ is the Lorentz force. In addition, $\nu$ is the viscosity and $\sigma$ is the electric conductivity. For simplicity, we will take $\nu = \sigma = 1 $ in the following parts.

This system has strong physical background, the reader can refer to \cite{BIS,DAV} for more physical introduction concerning on magnetohydrodynamics. By the divergence-free condition and the following vanishing condition that
\begin{equation}\label{eq.dis}
\int_{\RR^2}j\cdot (u\times B)\intd x+\int_{\RR^2}(j\times B)\cdot u\intd x=0,
\end{equation}
it is easy to show that for a smooth solution,
\begin{equation}\label{est-energy}
\big\|\big(u,E,B\big)(t)\big\|^2_{L^2(\RR^2)}+2\int_0^t\left(\|\nabla u(\tau)\|^2_{L^2(\RR^2)}+\|j(\tau)\|^2_{L^2(\RR^2)}\right)\intd\tau
=\|(u_0,E_0,B_0)\|^2_{L^2(\RR^2)}.
 \end{equation}
 This natural energy equality is very similar to that for the bi-dimensional Navier-Stokes equations. As we know, with the help of the energy estimate, Leray \cite{Leray} showed that the bi-dimensional Navier-Stokes system has a unique global-in-time weak solution. Inspired by this Leray theory, a natural question is that does system \eqref{eq.MNS} exist a unique global-in-time weak solution enjoying the energy estimate \eqref{est-energy}. However, due to the hyperbolic nature of the Maxwell equation, it is difficult to get compactness of $B$ and hence passing to the limit in the product $j\times B$ seems to be a challenge problem. This leads to that it is difficult to get  global-in-time existence of the $L^2$ energy weak solution. The essential reason is the lack of the control of $\int_0^\infty\|u(t)\|_{L^\infty}^2\intd t$. In fact, the $ L^2$ energy estimate just provides us the control of $\int_0^\infty\|u(t)\|_{\mathrm{BMO}}^2\intd t$ which is bounded by $\int_0^\infty\|\nabla u(t)\|_{L^2}^2\intd t.$¡¡ Hence, proving global existence of weak solutions to system \eqref{eq.MNS} in
the energy space $(L^2)^3$ or the borderline space $L^2\times L^2_{\rm log}\times L^2_{\rm log}$ seems is an open problem.

From above, we easily find that $\int_0^\infty\|u(t)\|_{L^\infty}^2\intd t$ is a very important quantity to show the global regularity of weak solutions. Unfortunately, we don't have the control of this quantity. But, it is very close to $\int_0^\infty\|u(t)\|_{\rm BMO}^2\intd t$. With this observation, Masmoudi \cite{Masmoudi-10} proved the existence and uniqueness of global strong solutions in the $H^s(\RR^2)$ framework to problem \eqref{eq.MNS} with $s>0$. His proof highly relies on a time-space logarithmic inequality that enabled him to upper estimate the $L^\infty$-norm of the velocity field by
the energy norm and higher Sobolev norms. Another line of research was pursued by Ibrahim and Keraani~\cite{Sahbi-SIAM}, they proved a local-in-time strong solution in the borderline space $\dot{B}^0_{2,1}\times(L^2_{\rm log})^2$ by using parabolic regularization arguments giving control of the $L^\infty$ norm of the velocity field of the solution. Based on this, a global-in-time result for small initial data and a local-in-time result for the large initial data in the borderline space $L^2\times(L^2_{\rm log})^2$ were obtained in \cite{PSN} by establishing an $L^2_tL^\infty$ estimate on the velocity field.  Very recently, Ibrahim, Masmoudi and  Lemari\'e-Rieusset in \cite{IML} proved the existence
of time-periodic small solutions and  their asymptotic
stability for the 3D Navier-Stokes-Maxwell problem in the presence of external time-periodic forces.

In our paper, our target is to show the global-in-time existence and uniqueness of solution for the large initial data in the borderline space $L^2\times(L^2_{\rm log})^2$. Therefore, the main task is to bridge the gap between $\int_0^\infty\|u(t)\|_{\rm BMO}^2\intd t$ and $\int_0^\infty\|u(t)\|_{L^\infty}^2\intd t$. But the previous methods for problem \eqref{eq.MNS} including the argument used in \cite{Masmoudi-10} do not work. This requires us to develop a new method to overcome this difficulty. Now, we take the linear heat equation as an example to illustrate our main idea. Our strategy is to use micro-analysis in physical space to bootstrap the regularity of solution. Let $f$ be the smooth solution of the linear heat equation
$\partial_tf-\Delta f=0.$ Multiplying  this linear heat equation by $\varphi_{j,k}f$, we see that
\[\frac12\partial_t\big(\varphi_{j,k}f^2\big)-\varphi_{j,k}f\Delta f=0,\]
where $\varphi_{j,k}$ is the solution of the eigenvalue problem, see Lemma \ref{Lem-En} for details. Integrating the above equality in space variable over $\RR^2$ and using Corollary \ref{coro-test-prop}, one has
\begin{equation*}
\begin{split}
&\big\|\sqrt{\varphi_{j,k}}f\big\|^2_{L^\infty_tL^2(\RR^2)}+\lambda_1 2^{2j}\big\|\sqrt{\varphi_{j,k}}f\big\|^2_{L^2_tL^2(\RR^2)}+2\big\|\sqrt{\varphi_{j,k}}\nabla f\big\|^2_{L^2_tL^2(\RR^2)}\\
=&\big\|\sqrt{\varphi_{j,k}}f_0\big\|^2_{L^2(\RR^2)}-\int_0^t\int_{\partial B_{2^{-j}}(k)}f^{2}\nabla\varphi_{j,k}\cdot n\intd S\intd\tau.
\end{split}
\end{equation*}
By the trace theorem and the H\"older inequality, we finally get that
\begin{align*}
&\|f\|_{L^\infty_tL^2(\RR^2)}^2+\int_0^t\|\nabla f(\tau)\|_{L^2(\RR^2)}^2\intd\tau+ \lambda_1\sup_{j\in\ZZ} 2^{2j}\sum_{k\in\ZZ^2}\|\sqrt{\varphi_{j,k}}f\|_{L^2_tL^2(\RR^2)}^2\\
\leq &C\|f_0\|_{L^2(\RR^2)}^2+C\int_0^t\|f(\tau)\|_{H^1(\RR^2)}^2\intd\tau.
\end{align*}
This together with the following natural $L^2$-energy estimate
\[\|f(t)\|^{2}_{L^2(\RR^2)}+\int_0^t\|\nabla f(\tau)\|_{L^2(\RR^2)}^2\intd\tau\leq\|f_0\|_{L^2(\RR^2)}^2\]
allows us to infer that
\[\sup_{j\in\ZZ} 2^{2j}\int_0^t\sum_{k\in\ZZ^2}\big\|\sqrt{\phi_{j,k}}f(\tau)\big\|^{2}_{L^2(\RR^2)} \intd\tau<\infty,\]
which plays the key role in our proof. In virtue of the Morrey-Campanato type characterization of $L^\infty(\RR^2)$, we know that this quantity is very close to $L^2_tL^\infty$. Thus,
with this global-in-time bound, we further establish the global-in-time bound of solution in in the borderline space $L^2\times(L^2_{\rm log})^2$ in terms of techniques in harmonic analysis. As a result, we eventually get the control of $\int_0^t\|u(t')\|_{L^\infty}^2\intd t'$.
This enables us to remove the small assumption for initial data in  \cite{PSN}.

Now we state our main result as follows:
\begin{theorem}\label{THM-2}
Let $u_0\in L^2(\RR^2)$ and $(E_0,B_0)\in (L^2_{\rm log}(\RR^2))^2.$ Then system \eqref{eq.MNS} admits a unique global-in-time solution $(u(t),E(t),B(t))\in C_{\mathrm b}(\RR^+;\,L^2(\RR^2))\times\big(C(\RR^+;\,L^2_{\rm log}(\RR^2))\big)^2$ such that \eqref{est-energy} and
\begin{equation*}
\begin{split}
&\big\|u(t)\big\|_{L^2(\RR^2)}^2+\big\|(E,B)\big\|_{\widetilde{L}^\infty_tL_{\rm log}^2(\RR^2)}^2+ \int_0^t\big\|j(\tau)\big\|_{L^2_{\rm log}(\RR^2)}^2\intd\tau+\int_0^t\|u(\tau)\|_{L^\infty(\RR^2)}^2\intd\tau\\
\leq& C\left(t,\|u_0\|_{L^2(\RR^2)},\|(E_0,B_0)\|_{L^2_{\rm log}(\RR^2)}\right).
\end{split}
\end{equation*}
\end{theorem}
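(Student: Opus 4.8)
The plan is to establish the a priori estimates stated in the theorem on a regularized family of solutions, recover a genuine solution by compactness, and then prove uniqueness; the pivot of the whole argument is the global control of $\int_0^t\|u(\tau)\|_{L^\infty}^2\intd\tau$, which I extract from the localized energy bound sketched in the introduction. First I would regularize \eqref{eq.MNS} so as to restore compactness of the hyperbolic block --- for instance by a Friedrichs truncation of the nonlinearities together with an artificial viscosity $-\eps\Delta$ in the $E,B$ equations --- producing smooth approximations $(u^\eps,E^\eps,B^\eps)$ for which the cancellation \eqref{eq.dis} and the divergence-free constraints give the uniform energy bound \eqref{est-energy} (with the extra dissipation only helping). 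This already furnishes $u^\eps\in L^\infty_tL^2\cap L^2_t\dot H^1$, $(E^\eps,B^\eps)\in L^\infty_tL^2$ and $j^\eps\in L^2_tL^2$, all bounded independently of $\eps$.

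The heart of the proof is the localized Morrey--Campanato estimate for $u^\eps$. Testing the velocity equation against $\varphi_{j,k}u^\eps$, with $\varphi_{j,k}$ the eigenfunction from Lemma \ref{Lem-En}, and integrating over $\RR^2$, I reproduce the heat-equation computation of the introduction but now with three extra contributions: the convection term, which after an integration by parts exploiting $\Div u^\eps=0$ collapses to $\tfrac12\int(u^\eps\cdot\nabla\varphi_{j,k})|u^\eps|^2$; the pressure term, which I would handle through the Calder\'on--Zygmund representation $\pi=-\mathbb{P}((u\cdot\nabla)u-(j\times B))$; and the Lorentz forcing $\int\varphi_{j,k}\,u^\eps\cdot(j^\eps\times B^\eps)$. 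Summing over $k\in\ZZ^2$, taking the supremum over $j\in\ZZ$, and invoking the trace theorem and Corollary \ref{coro-test-prop} exactly as in the model case, I expect to close
\[
\sup_{j\in\ZZ}2^{2j}\int_0^t\sum_{k\in\ZZ^2}\big\|\sqrt{\varphi_{j,k}}\,u^\eps(\tau)\big\|_{L^2(\RR^2)}^2\intd\tau\leq C\big(t,\|u_0\|_{L^2},\|(E_0,B_0)\|_{L^2_{\rm log}}\big)
\]
uniformly in $\eps$. Because this quantity is the Morrey--Campanato norm characterizing $L^\infty(\RR^2)$, it dominates $\int_0^t\|u^\eps(\tau)\|_{L^\infty}^2\intd\tau$ up to the logarithmic loss that is precisely carried by the $L^2_{\rm log}$ regularity of $(E,B)$.

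With the $L^2_tL^\infty$ bound on $u^\eps$ in hand, the remaining estimates follow. Running the logarithmically weighted energy estimate on the Maxwell block and using Ohm's law $j=E+u\times B$, the only dangerous coupling $u^\eps\times B^\eps$ is absorbed by $\|u^\eps\|_{L^\infty}\|B^\eps\|_{L^2_{\rm log}}$ and closed by Gr\"onwall against the now-integrable weight $\int_0^t\|u^\eps\|_{L^\infty}^2\intd\tau$, yielding the uniform bounds on $\|(E^\eps,B^\eps)\|_{\widetilde L^\infty_tL^2_{\rm log}}$ and $\int_0^t\|j^\eps\|_{L^2_{\rm log}}^2\intd\tau$ claimed in the statement. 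The parabolic smoothing then gives strong compactness of $u^\eps$ by Aubin--Lions, while the artificial viscosity together with the uniform $L^2_{\rm log}$ control supply enough compactness of $(E^\eps,B^\eps)$ to pass to the limit in the product $j^\eps\times B^\eps$ --- the very step that collapses at the bare $L^2$ level and that the $L^\infty$ bound on $u$ now rescues. Uniqueness is obtained by the same mechanism: estimating the difference of two solutions in the energy norm, the convection and Lorentz-force differences are each dominated by $\|u\|_{L^\infty}$, so Gr\"onwall with the weight $\int_0^t\|u\|_{L^\infty}^2\intd\tau$ forces the difference to vanish.

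The step I expect to be the main obstacle is closing the localized estimate of the second paragraph. In contrast to the linear heat model, the convection term and the Lorentz forcing couple all dyadic scales $2^j$ and all physical cells $k$ simultaneously, so bounding them uniformly in $(j,k)$ --- and uniformly in the regularization parameter $\eps$ --- without degrading the sharp Morrey--Campanato scaling is delicate; this is exactly the gap between $\int_0^t\|u\|_{\rm BMO}^2\intd\tau$ and $\int_0^t\|u\|_{L^\infty}^2\intd\tau$ that the earlier $H^s$ and small-data methods could not cross.
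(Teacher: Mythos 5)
Your proposal correctly identifies the pivotal quantity, but it contains a genuine gap at the exact point where the paper has to work hardest. You assert that the localized bound $\sup_{j\in\ZZ}2^{2j}\int_0^t\sum_{k}\|\sqrt{\phi_{j,k}}u\|^2_{L^2}\intd\tau$ ``dominates'' $\int_0^t\|u(\tau)\|^2_{L^\infty}\intd\tau$ up to a logarithmic loss. It does not: the supremum over scales $j$ sits \emph{outside} the time integral, whereas the $L^2_tL^\infty$ norm requires the supremum \emph{inside}, and $\sup_j\int\leq\int\sup_j$ goes the wrong way. This is precisely the Chemin--Lerner-type obstruction the paper flags in Remark \ref{Prop-heat-im}'s remark (the convergence is not uniform in $j$), and it is why the paper never uses the localized quantity as an $L^\infty$ bound on $u$ itself. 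What the localized bound actually controls, via Lemma \ref{local-Bernstein}, is $\sup_{p}\|S_p u\|_{L^2_tL^\infty}$ --- low-frequency truncations, uniformly in the truncation index. The paper's a priori estimates are engineered around this weaker object: in Proposition \ref{prop-improve} the Lorentz force is treated by the trilinear paraproduct Lemma \ref{Lem-tri}, which only ever produces factors $\sup_p\|S_p|u|\|_{L^2_tL^\infty}$; these are absorbed into the left-hand side by Cauchy--Schwarz together with choosing the frequency cutoff $N$ so large that $\big(\sum_{|q|\geq N}\|\dot{\Delta}_qj\|^2_{L^2_tL^2}\big)^{1/2}\leq\lambda_1/(8\widetilde{C})$ (possible because $j\in L^2_tL^2$ globally by the energy identity) --- a coupled absorption between the velocity and Maxwell blocks that your sketch leaves open (you yourself flag this step as the ``main obstacle''). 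Likewise, your Gr\"onwall argument for the Maxwell block is circular as written: it invokes $\|u\|_{L^\infty}$, which is not yet available; Proposition \ref{prop-strong} avoids this because in the paraproduct $\dot{T}_{u}B$ only $\dot{S}_{q+5}u$ multiplies $\dot{\Delta}_qB$, so the exponent in Gr\"onwall is $\int_0^t\|\dot{S}_{q+5}u\|^2_{L^\infty}\intd\tau$, which the localized bound controls uniformly in $q$. The genuine $L^2_tL^\infty$ estimate of the theorem is then obtained in the paper by an entirely separate argument (Proposition \ref{coro-L2LI}): a Duhamel splitting $u=u_{2d}+u_2$, Fourier--Herz space estimates (Proposition \ref{prop.negative}), and paraproduct bounds that consume the already-established $L^2_{\rm log}$ regularity of $j$ and $B$. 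This whole second stage is absent from your proposal, and without it the $\int_0^t\|u\|^2_{L^\infty}\intd\tau$ bound in the statement --- and your uniqueness argument, which relies on it --- has no proof.

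A secondary, smaller divergence: you regularize by Friedrichs truncation plus artificial viscosity $-\eps\Delta$ in the Maxwell equations and invoke Aubin--Lions, claiming the artificial viscosity supplies compactness for $(E^\eps,B^\eps)$. That compactness is not uniform in $\eps$ (the viscosity bound degenerates as $\eps\to0$), so passing to the limit in $j^\eps\times B^\eps$ is not justified this way --- this is exactly the hyperbolic compactness failure the introduction warns about. The paper sidesteps it by approximating only the data, $(S_{N+1}u_0,S_{N+1}E_0,S_{N+1}B_0)$, solving the \emph{unmodified} system globally via Masmoudi's $H^s$ theorem, and then proving the approximations form a Cauchy sequence in $L^\infty_tL^2\cap L^2_t\dot{H}^1\times(L^\infty_tL^2)^2$ by running the uniqueness-type difference estimate (which is legitimate there because the uniform $L^2_tL^\infty$-type bounds have already been established for the approximate solutions); strong convergence then replaces any compactness argument.
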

\begin{remark}
Compared with result in \cite{Sahbi-SIAM,PSN},  we extend the local-in-time solution established in \cite{Sahbi-SIAM} to the global-in-time solution in theorem \ref{THM-2}, while we removes the small assumption for initial data in \cite{PSN}.
\end{remark}
\begin{remark}
Let us point out that in our paper, we develop the following new estimate
\[\sup_{j\in\ZZ}2^{2j}\int_0^t\sum_{k\in\mathbb{Z}^2}\big\|\sqrt{\phi_{i,k}}u(\tau)\big\|^2_{L^2(\RR^2)}\intd\tau<\infty.\]
 In terms of the Morrey-Campanato type characterization of $L^\infty(\RR^2)$, we easily find that it is very close to  $L^2_tL^\infty$-estimate for $u$. In the other words, this type space can be viewed as the Chemin-Lerner space in the framework of localization.
\end{remark}

\textbf{Acknowledgments:}
This work was supported in part by the National Natural Science Foundation of China.   C. Miao was also supported  by Beijing Center for Mathematics and Information
Interdisciplinary Sciences.

\section{Preliminary}\label{Pre}
\setcounter{section}{2}\setcounter{equation}{0}

\subsection{Littlewood-Paley Theory and the functional spaces}
In this subsection, we first review the so-called Littlewood-Paley decomposition described, e.g., in \cite{BCD11,Can1,Can2,MWZ2012}. Next, we introduce some useful functional spaces such as Morrey-Campanato space and its properties.
Let $(\chi,\psi)$ be a couple of smooth functions with  values in $[0,1]$
such that $\chi$ is supported in the ball $\big\{\xi\in\mathbb{R}^{d}\big||\xi|\leq\frac{4}{3}\big\}$,
$\varphi$ is supported in the ring $\big\{\xi\in\mathbb{R}^{d}\,\big|\,\frac{3}{4}\leq|\xi|\leq\frac{8}{3}\big\}$ and
\begin{equation*}
    \chi(\xi)+\sum_{j\in \mathbb{N}}\psi(2^{-j}\xi)=1\quad {\rm for \ each\ }\xi\in \mathbb{R}^{d}.
\end{equation*}
For any $u\in \mathcal{S}'(\mathbb{R}^{d})$, one can define the
dyadic blocks as
\begin{equation*}
 \Delta_{-1}u=\chi(D)u\quad\text{and}\quad   {\Delta}_{j}u:=\Psi(2^{-j}D)u\quad {\rm for\ each\ }j\in\mathbb{N}.
\end{equation*}
We also define the following low-frequency cut-off:
\begin{equation*}
    {S}_{j}u:=\chi(2^{-j}D)u.
\end{equation*}
According to the support in frequency space, it is easy to verify that
\begin{equation*}
    u=\sum_{j\geq-1}{\Delta}_{j}u,\quad \text{in}\quad\mathcal{S}'(\mathbb{R}^{d}),
\end{equation*}
and this is called the \emph{inhomogeneous Littlewood-Paley decomposition}.
It has nice properties of quasi-orthogonality:
\begin{equation*}
    {\Delta}_{j}{\Delta}_{j'}u\equiv 0\quad \text{if}\quad |j-j'|\geq 2.
\end{equation*}
\begin{equation*}
{\Delta}_{j}({S}_{j'-1}u{\Delta}_{j'}v)\equiv0\quad \text{if}\quad |j-j'|\geq5.
\end{equation*}
We shall also use the \emph{homogeneous Littlewood-Paley} operators as follows:
\begin{equation*}
    \dot{S}_{j}u:=\chi(2^{-j}D)u \quad\text{and}\quad\dot{\Delta}_{j}u:=\Phi(2^{-j}D)u\quad\text{for each}\,\,j\in\mathbb{Z},
\end{equation*}
which enjoy the properties of quasi-orthogonality as above for inhomogeneous operator.
\begin{definition}
Let $\mathcal{S}_{h}'(\RR^d)$ be the space of tempered distributions $u$ such that
\begin{equation*}
    \lim_{q\rightarrow-\infty}\dot{S}_{j}u=0,\quad \text{in} \quad \mathcal{S}'(\RR^d).
\end{equation*}
\end{definition}
\begin{definition} \label{def-2.5}
For any  $u, v\in\mathcal {S}_{h}^{\prime}(\RR^d)$,  the product $u
v$ has the homogeneous Bony decomposition:
$$u v=\dot{T}_{u} v+\dot{T}_{v}u+\dot{R}(u,v),
$$
where the paraproduct term
$$\dot{T}_{u}v=\sum_{j\leq{k-2}}\dot{\Delta}_{j}u\dot{\Delta}_{k}v=\sum_{j}{\dot{S}_{j-1}u}{\dot{\Delta}_{j}v},$$
and the remainder term
$$\dot{R}(u,v)=\sum_{j}\dot{\Delta}_{j}u\widetilde{\dot{\Delta}}_{j}v, \quad \widetilde{\dot{\Delta}}_{j}:=\sum_{k=-1}^1\dot{\Delta}_{j-k}.$$
\end{definition}
In the similar way, we can define the inhomogeneous Bony decomposition:
$$u v= T _{u} v+ T _{v}u+ R (u,v),
$$
one can refer to \cite{BCD11} for the details.
Now  we introduce the Bernstein lemma which will be useful
throughout this paper.
\begin{lemma}\label{bernstein}
Let $1\leq a\leq b\leq\infty$ and  $f\in L^a(\RR^d)$. Then there exists a positive constant $C$ such that for $q,\,k\in\NN$,
\begin{equation*}
\sup_{|\alpha|=k}\|\partial ^{\alpha}\dot{S}_{q}f\|_{L^b(\RR^d)} \leq  C^k\,2^{q\left(k+d(\frac{1}{a}-\frac{1}{b})\right)}\|\dot{S}_{q}f\|_{L^a(\RR^d)},
\end{equation*}
\begin{equation*}
C^{-k}2^{qk}\|{\dot{\Delta}}_{q}f\|_{L^a(\RR^d)} \leq \sup_{|\alpha|=k}\|\partial ^{\alpha}{\dot{\Delta}}_{q}f\|_{L^a(\RR^d)}\,\leq\,C^k2^{qk}\|{\dot{\Delta}}_{q}f\|_{L^a(\RR^d)}.
\end{equation*}
\end{lemma}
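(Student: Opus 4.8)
The plan is to exploit the frequency localization of the Littlewood--Paley blocks and reduce every estimate to Young's convolution inequality, keeping careful track of the exponential-in-$k$ constant. Recall that $\dot{S}_q f=\chi(2^{-q}D)f$ has Fourier support in the ball $\{|\xi|\le\frac43 2^q\}$, while $\dot\Delta_q f=\Phi(2^{-q}D)f$ has Fourier support in the annulus $\{\frac34 2^q\le|\xi|\le\frac83 2^q\}$. For the first inequality I would fix once and for all a function $\tilde\chi\in C_c^\infty$ equal to $1$ on the ball $\{|\xi|\le\frac43\}$; then $\tilde\chi(2^{-q}\xi)$ equals $1$ on the support of $\widehat{\dot S_q f}$, so $\dot S_q f=\tilde\chi(2^{-q}D)\dot S_q f$. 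Writing $\tilde h:=\mathcal{F}^{-1}\tilde\chi$, differentiation under the convolution gives $\partial^\alpha\dot S_q f=K_\alpha^{(q)}\ast\dot S_q f$ with $K_\alpha^{(q)}(x)=2^{q(d+k)}(\partial^\alpha\tilde h)(2^q x)$ for $|\alpha|=k$. I would then pick the Young exponent $c$ with $1+\frac1b=\frac1a+\frac1c$ and compute by a dyadic rescaling that $\|K_\alpha^{(q)}\|_{L^c}=2^{q(k+d(\frac1a-\frac1b))}\|\partial^\alpha\tilde h\|_{L^c}$, where I use $1-\frac1c=\frac1a-\frac1b$. Young's inequality $\|K_\alpha^{(q)}\ast\dot S_q f\|_{L^b}\le\|K_\alpha^{(q)}\|_{L^c}\|\dot S_q f\|_{L^a}$ then yields exactly the claimed bound, \emph{provided} $\sup_{|\alpha|=k}\|\partial^\alpha\tilde h\|_{L^c}\le C^k$.

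The right-hand inequality in the second line is identical, replacing $\tilde\chi$ by a function $\tilde\Phi\in C_c^\infty$ supported away from the origin and equal to $1$ on the support of $\Phi$, and taking $a=b$ (so $c=1$): one obtains $\partial^\alpha\dot\Delta_q f=\tilde K_\alpha^{(q)}\ast\dot\Delta_q f$ and $\|\partial^\alpha\dot\Delta_q f\|_{L^a}\le 2^{qk}\|\partial^\alpha\tilde\phi\|_{L^1}\|\dot\Delta_q f\|_{L^a}$, with $\tilde\phi=\mathcal{F}^{-1}\tilde\Phi$. The left-hand (reverse) inequality is the only genuinely new ingredient and is where the support away from the origin becomes essential. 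Here I would use the multinomial identity $|\xi|^{2k}=\sum_{|\beta|=k}\frac{k!}{\beta!}\xi^{2\beta}$ to build an explicit symbol decomposition on the annulus: since $|\xi|$ is bounded below there, setting $\hat g_\beta^{(q)}(\xi)=\frac{(-1)^k k!}{\beta!}\,|\xi|^{-2k}(i\xi)^\beta\,\tilde\Phi(2^{-q}\xi)$ gives the identity $\tilde\Phi(2^{-q}\xi)=\sum_{|\beta|=k}\hat g_\beta^{(q)}(\xi)(i\xi)^\beta$ on that set.

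Applying $\tilde\Phi(2^{-q}D)$ to $\dot\Delta_q f$ (which leaves it unchanged) and taking inverse Fourier transforms then produces the reconstruction formula $\dot\Delta_q f=\sum_{|\beta|=k} g_\beta^{(q)}\ast\partial^\beta\dot\Delta_q f$, so Young's inequality gives
\[
\|\dot\Delta_q f\|_{L^a}\le\Big(\sum_{|\beta|=k}\|g_\beta^{(q)}\|_{L^1}\Big)\sup_{|\alpha|=k}\|\partial^\alpha\dot\Delta_q f\|_{L^a}.
\]
A further dyadic rescaling shows $\sum_{|\beta|=k}\|g_\beta^{(q)}\|_{L^1}=2^{-qk}\sum_{|\beta|=k}\frac{k!}{\beta!}\|\mathcal{F}^{-1}m_\beta\|_{L^1}$, where $m_\beta(\eta)=|\eta|^{-2k}(i\eta)^\beta\tilde\Phi(\eta)$ is the rescaled (dilation-invariant) symbol, and the factor $2^{-qk}$ supplies the required power of $2^q$.

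The main obstacle in all three estimates is the uniform control of the exponential constant, that is, proving $\sup_{|\alpha|=k}\|\partial^\alpha\tilde h\|_{L^c}\le C^k$ and $\sum_{|\beta|=k}\frac{k!}{\beta!}\|\mathcal{F}^{-1}m_\beta\|_{L^1}\le C^k$. Both follow from the same mechanism: the symbols are smooth and compactly supported (and, in the reverse case, supported away from the origin so that $|\eta|^{-2k}$ is harmless), so each inverse transform is a Schwartz function whose $L^1$ or $L^c$ norm I would dominate by finitely many derivatives of the symbol via the elementary bound $\|\mathcal{F}^{-1}g\|_{L^1}\le C\sum_{|\gamma|\le d+1}\|\partial^\gamma g\|_{L^1}$. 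Differentiating $\xi^\beta\tilde\Phi/|\xi|^{2k}$ only a fixed number of times produces polynomial-in-$k$ factors multiplying quantities of size $C^k$ on the compact annular support, so each individual norm is at most $C_0^k$; the summation over $\beta$ is then absorbed by the multinomial bound $\sum_{|\beta|=k}\frac{k!}{\beta!}=d^k$, yielding the final constant $C^k$. This bookkeeping carries the bulk of the work, but none of it is conceptually difficult once the convolution representations above are in place.
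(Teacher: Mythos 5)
Your proof is correct. Note that the paper itself does not prove this lemma at all: it is quoted as a standard fact of Littlewood--Paley theory (cf.\ Lemma 2.1 in \cite{BCD11}), so there is no argument in the paper to compare against. Your argument is precisely the classical one: write $\dot S_q f$ (resp.\ $\dot\Delta_q f$) as a convolution with a rescaled kernel whose symbol is a fattened cut-off $\tilde\chi$ (resp.\ $\tilde\Phi$), apply Young's inequality with $1+\frac1b=\frac1a+\frac1c$, and, for the reverse inequality, invert the symbol on the annulus via $|\xi|^{2k}=\sum_{|\beta|=k}\frac{k!}{\beta!}\xi^{2\beta}$; the exponential constant $C^k$ is correctly traced to the compact support of the symbols (which caps $|\xi^\alpha|$ by $R^k$), the fixed number $\le d+1$ of symbol derivatives needed to control the kernel norms, and the multinomial identity $\sum_{|\beta|=k}\frac{k!}{\beta!}=d^k$. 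All the scaling computations ($\|K_\alpha^{(q)}\|_{L^c}=2^{q(k+d(\frac1a-\frac1b))}\|\partial^\alpha\tilde h\|_{L^c}$, the factor $2^{-qk}$ in the reconstruction kernels) check out.
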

\begin{definition}\label{def2.2}
Let $s\in \mathbb{R}$, $(p,q)\in [1,\infty]^{2}$ and $u\in \mathcal{S}'(\mathbb{R}^{d})$. Then we define the \emph{inhomogeneous Besov spaces} as
\begin{equation*}
   {B}^{s}_{p,q}(\mathbb{R}^{d}):=\big\{u\in\mathcal{S}'(\mathbb{R}^{d})\big|\norm{u}_{{B}^{s}_{p,q}(\mathbb{R}^{d})}<\infty\big\},
\end{equation*}
where,
\begin{equation*}
    \norm{u}_{{B}^{s}_{p,q}(\mathbb{R}^{d})}:=\begin{cases}
    \Big(\sum_{j\geq-1}2^{jsq}\norm{{\Delta}_{j}u}_{L^{p}(\mathbb{R}^{d})}^{q}\Big)^{\frac{1}{q}}
    \quad&\text{if}\quad q<\infty,\\
   \sup_{j\geq-1}2^{js}\norm{{\Delta}_{j}u}_{L^{p}(\mathbb{R}^{d})}\quad&\text{if}\quad q=\infty.
    \end{cases}
\end{equation*}
\end{definition}
\begin{definition}\label{def.FB}
Let $s\in \mathbb{R}$, $(p,q)\in [1,\infty]^{2}$ and $u\in \mathcal{S}'(\mathbb{R}^{d})$. Then we define the \emph{inhomogeneous Fourier-Herz spaces} as
\begin{equation*}
   {FB}^{s}_{p,q}(\mathbb{R}^{d}):=\left\{u\in\mathcal{S}'(\mathbb{R}^{d})\big|\norm{u}_{{FB}^{s}_{p,q}(\mathbb{R}^{d})}<\infty\right\},
\end{equation*}
where,
\begin{equation*}
    \norm{u}_{{FB}^{s}_{p,q}(\mathbb{R}^{d})}:=\begin{cases}
    \Big(\sum_{j\geq-1}2^{jsq}\big\|\widehat{{\Delta}_{j}u}\big\|_{L^{p}(\mathbb{R}^{d})}^{q}\Big)^{\frac{1}{q}}
    \quad&\text{if}\quad q<\infty,\\
   \sup_{j\geq-1}2^{js}\big\|\widehat{{\Delta}_{j}u}\big\|_{L^{p}(\mathbb{R}^{d})}\quad&\text{if}\quad q=\infty.
    \end{cases}
\end{equation*}
\end{definition}
\begin{definition}\label{def.logS}
For $s,\sigma\in\RR$ and $\alpha>0$, we define the space $\dot{H}^{s,\sigma}_\alpha(\RR^2)$ by its norm
\begin{equation*}
\|u\|_{\dot{H}^{s,\sigma}_\alpha(\RR^2)}^2:=\sum_{q\leq0}2^{2qs}\|\dot{\Delta}_qu\|_{L^2(\RR^2)}^2+\sum_{q>0}q^{\alpha}2^{2q\sigma}\|\dot{\Delta}_qu\|_{L^2(\RR^2)}^2.
\end{equation*}
Finally, we define $\widetilde{L}_t^r\dot{H}^{s,\sigma}_\alpha$ by its norm
\begin{equation*}
\|u\|_{\widetilde{L}^r_t\dot{H}^{s,\sigma}_\alpha(\RR^2)}^2:=\sum_{q\leq0}2^{2qs}\|\dot{\Delta}_qu\|_{L^2_tL^2(\RR^2)}^2
+\sum_{q>0}q^{\alpha}2^{2q\sigma}\|\dot{\Delta}_qu\|_{L^2_tL^2(\RR^2)}^2.
\end{equation*}
\end{definition}
Through the whole paper, we denote $\dot{H}^{0,0}_1(\RR^2)$ by $L^2_{\rm log}(\RR^2)$ for the sake of simplicity.

 Next, we introduce localization in physical space. Firstly,  we define partition of unity that we shall use through our paper.
\begin{proposition}\label{prop-localization-in-physical}
Let $B_1(0):= \{\xi\in\RR^2\,|\,|\xi|\leq 1 \}$. There exists radial function $\phi$, valued in the interval $[0,1]$, belonging to $\mathcal{D}(B_1(0))$, and such that
\begin{equation}\label{eq.idenity}
\sum_{k\in\mathbb{Z}^{2}}\phi(2^{j}x+k)=1,\quad\forall\,x\in\RR^2,\,\text{and}\,\,j\in\ZZ,
\end{equation}
\begin{equation}\label{eq.physical-dis}
\mathrm{Supp}\,\phi_{j,i}\cap\mathrm{Supp}\,\phi_{j,k}=\emptyset,\quad\text{if}\,\,\,\,|i-k|\geq5,
\end{equation}
and
\begin{equation}\label{eq.physical-l2}
\frac{1}{16}\leq \sum_{k\in\ZZ^2}\phi_{j,k}^2(x)\leq 1,\quad \text{for all}\quad x\in\RR^2\text{ and }j\in\ZZ.
\end{equation}
Here and what in follows, we denote $\phi_{j,k}=\phi(2^{j}x-k).$
\end{proposition}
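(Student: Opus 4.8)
The plan is to obtain $\phi$ from a single radial generating bump by the classical normalization procedure, to read off the quantitative bounds \eqref{eq.physical-dis}--\eqref{eq.physical-l2} directly from the support size and from \eqref{eq.idenity}, and to isolate the rotational symmetry as the one genuinely delicate point. First I would fix a radial, nonnegative $\theta\in\mathcal{D}(B_1(0))$ with $0\le\theta\le1$ and $\theta>0$ on $B_{4/5}(0)$. Because $\tfrac{\sqrt2}{2}<\tfrac45<1$, the closed balls $B_{2^{-j}}(2^{-j}k)$ already cover $\RR^2$ (the center of a lattice cell sits at distance $\tfrac{\sqrt2}{2}2^{-j}$ from the nearest node), so the periodized sum $\Theta(y):=\sum_{k\in\ZZ^2}\theta(y-k)$ is smooth, $\ZZ^2$-periodic, and bounded below by a positive constant. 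Setting $\phi:=\theta/\Theta$ gives a function in $\mathcal{D}(B_1(0))$ that inherits the support of $\theta$, takes values in $[0,1]$, and satisfies $\sum_{k}\phi(y-k)\equiv1$; the substitution $y=2^{j}x$ then yields \eqref{eq.idenity} for every $j\in\ZZ$.

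With \eqref{eq.idenity} in hand, the remaining bounds are essentially free. Since $\phi_{j,k}$ is supported in $B_{2^{-j}}(2^{-j}k)$, two such supports meet only when $|2^{-j}i-2^{-j}k|\le2\cdot2^{-j}$, i.e. $|i-k|\le2$, which gives \eqref{eq.physical-dis} with room to spare. For \eqref{eq.physical-l2}, the upper bound is immediate from $0\le\phi_{j,k}\le1$ and \eqref{eq.idenity}, namely $\sum_{k}\phi_{j,k}^2\le\sum_{k}\phi_{j,k}=1$; the lower bound comes from Cauchy--Schwarz, since at any point at most $N$ of the $\phi_{j,k}$ are nonzero, with $N$ no larger than the number of lattice nodes in a closed unit ball (well under $16$), so that $1=\big(\sum_{k}\phi_{j,k}\big)^{2}\le N\sum_{k}\phi_{j,k}^{2}\le16\sum_{k}\phi_{j,k}^{2}$.

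The hard part is the radiality of $\phi$ itself, and this is exactly where I expect the real work to lie. The generator $\theta$ can be chosen radial, but the normalizing factor $\Theta$ is only $\ZZ^2$-periodic, so the quotient $\theta/\Theta$ need not be rotation invariant. To keep $\phi$ radial one cannot divide by the periodization; instead one must directly design a compactly supported radial profile whose integer translates sum to a constant. By Poisson summation, \eqref{eq.idenity} is equivalent to the conditions $\widehat{\phi}(0)=1$ and $\widehat{\phi}(n)=0$ for all $n\in\ZZ^2\setminus\{0\}$, so the task becomes forcing a radial, exponential-type Fourier transform to vanish at every nonzero lattice point while maintaining $0\le\phi\le1$. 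Reconciling this system of vanishing conditions with pointwise rotational invariance, subject to the support constraint $\mathrm{Supp}\,\phi\subset B_1(0)$, is the crux of the statement; once a radial $\phi$ meeting \eqref{eq.idenity} is secured, the bounds \eqref{eq.physical-dis}--\eqref{eq.physical-l2} follow as above.
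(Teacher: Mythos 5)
Your construction is the same as the paper's: the paper fixes a radial $\zeta$ equal to $1$ on $B_{\frac{\sqrt2}{2}}(0)$ and supported in $B_1(0)$, periodizes it to $S=\sum_{k\in\ZZ^2}\zeta(\cdot+k)$, and sets $\phi=\zeta/S$; your $\theta/\Theta$ with $\theta>0$ on $B_{4/5}(0)$ is the identical normalization, and your verification of \eqref{eq.physical-dis} and \eqref{eq.physical-l2} is, if anything, cleaner than the paper's. For the lower bound in \eqref{eq.physical-l2} the paper splits $\ZZ^2$ into residue classes modulo $4$ and uses disjointness of supports within each class (its write-up is garbled: it indexes only the first coordinate $k_1$, claims $I_m^j=1$, and sums $\sum_{m=0}^{4}$ over what should be $16$ two-dimensional classes), whereas your direct Cauchy--Schwarz, combined with the observation that at any point at most $5$ of the translates are nonzero, is correct and in fact yields the sharper constant $\sum_k\phi_{j,k}^2\geq\frac15$.

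The issue you flag in your last paragraph is a genuine catch, but you should resolve it negatively rather than leave it as an open ``hard part.'' Radiality of $\phi$ is not only unestablished in the paper, whose $\phi=\zeta/S$ is visibly not rotation invariant since $S$ is merely $\ZZ^2$-periodic; it is impossible. Push your own Poisson-summation reformulation one step further: if $\phi$ were radial, supported in $B_1(0)$, with $\widehat\phi(0)=1$ and $\widehat\phi(n)=0$ for all $n\in\ZZ^2\setminus\{0\}$, then $h(t):=\widehat\phi(t e_1)$ would be a nonzero even entire function of exponential type (Paley--Wiener), bounded on $\RR$, with $h(0)=1$, vanishing at $\pm|n|$ for every $n\in\ZZ^2\setminus\{0\}$, that is, at $\pm\sqrt{m}$ for every integer $m\geq1$ that is a sum of two squares. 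The number of such points in $[-r,r]$ grows like $r^2/\sqrt{\log r}$ by the Landau--Ramanujan theorem (even the elementary divisor bound gives $\gg r^{2-\varepsilon}$), while Jensen's formula caps the number of zeros of $h$ in $|z|\leq r$ by $O(r)$; hence $h\equiv0$, a contradiction. So the word ``radial'' must simply be dropped from Proposition \ref{prop-localization-in-physical}, which is harmless: nothing downstream (Lemma \ref{lemma-Y}, Lemma \ref{local-Bernstein}, Proposition \ref{Prop-heat-im}, Proposition \ref{prop-improve}) uses radiality of $\phi$, only $0\leq\phi\leq1$, $\mathrm{Supp}\,\phi\subset B_1(0)$, and \eqref{eq.idenity}--\eqref{eq.physical-l2}. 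With that amendment, your first two paragraphs already constitute a complete proof, matching the paper's argument and tightening its constants, and your skepticism about the quotient construction is better founded than the paper's own claim.
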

\begin{proof}
Let us choose a radial smooth function $\zeta$  satisfying
 \begin{equation*}
 \zeta(x)=\begin{cases}
 1&|x|\leq\frac{\sqrt{2}}{2};\\
 0&|x|\geq1.
 \end{cases}
 \end{equation*}
 Thus, we have that if a couple $(i,k)$ satisfying $|k-i|\geq5,$
\begin{equation}\label{eq.disjoint}
B_1(i)\cap B_1 (k)=\emptyset.
\end{equation}
Now, we let
\[S(x)=\sum_{k\in\mathbb{Z}^2}\zeta(x+k).\]
It is obvious that $S(x+k)=S(x)$ for all $k\in\ZZ^2.$ According to property \eqref{eq.disjoint}, we know that the above summation $S(x)$ is finite on $\RR^2.$ Thus, the function $S(x)$ is smooth on $\RR^2.$ On the other hand, we have
\begin{equation*}
\bigcup_{k\in\ZZ^2}B_1 (k)=\RR^2.
\end{equation*}
Since the function $\zeta$ is nonnegative and has value $1$ near $B_{\frac{\sqrt{2}}{2}}(0)$, it follows from the covering property that the function $S$ is positive.

Now, we claim that the function $\phi=\frac{\zeta}{S}$ is suitable. In fact, it is obvious that $\phi$ belongs to $\mathcal{D}(B_1(0))$ and
\begin{equation*}
\begin{split}
\sum_{k\in\mathbb{Z}^{2}}\phi(x-k)=&\sum_{k\in\mathbb{Z}^2}\frac{\zeta(x-k)}{S(x-k)}\\
=&\sum_{k\in\mathbb{Z}^2}\frac{\zeta(x-k)}{S(x)}=1,
\quad\forall\,x\in\RR^2\,\,\text{and}\,\,j\in\ZZ.
\end{split}
\end{equation*}
Now, it remains for us to prove \eqref{eq.physical-l2}. Let us denote that for $m=0,1,2,3,$
\begin{equation*}
I_m^j:=\sum_{\substack{k_1=4i+m}}\phi^2(2^{j}x+k)=1,\quad\forall\,x\in\RR^2\,\,\text{and}\,\,j\in\ZZ,
\end{equation*}
where $k=(k_1,k_2)^T$ and $i\in\ZZ.$

Thanks to property \eqref{eq.physical-dis}, it is obvious that
\begin{equation*}
1=\bigg(\sum_{k\in\mathbb{Z}^{2}}\phi(2^{j}x+k)\bigg)^2\leq 16\sum_{m=0}^4I_m^j.
\end{equation*}
This estimate yields \eqref{eq.physical-l2} and we end the proof of Proposition \ref{prop-localization-in-physical}.
\end{proof}
\begin{lemma} \label{lemma-Y}
\begin{enumerate}[\rm (i)]
  \item Let $\Phi\in \mathcal{S}(\mathbb{R}^2)$, then there holds
\begin{equation}\label{eq.Y-1}
\big\|\Phi\ast f\|_{L^\infty(\RR^2)}\leq C\sup_{k\in\mathbb{Z}^2}\|\phi_{0,k}f\|_{L^1(\RR^2)},
\end{equation}
where $C$ is a positive constant independent of $f$.
  \item  Let $i,\,j\in\ZZ$ and $i\leq j$. Then, we have that for each $q\in[1,\infty],$
\begin{equation}\label{eq.Y-2}
\sup_{k\in\mathbb{Z}^2}\|\phi_{i,k}f\|_{L^q(\RR^2)}\leq   2^{\frac{1+2(j-i)}{q}}\sup_{k\in\mathbb{Z}^2}\|\phi_{j,k}f\|_{L^q(\RR^2)}.
\end{equation}
\end{enumerate}
\end{lemma}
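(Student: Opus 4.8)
The plan is to prove the two estimates separately, in each case reducing the global statement to a uniform local one by exploiting the finite-overlap property \eqref{eq.physical-dis} of the partition $\{\phi_{j,k}\}$. For part (i), I would start from the convolution representation $(\Phi\ast f)(x)=\int_{\RR^2}\Phi(x-y)f(y)\intd y$ and insert the partition of unity \eqref{eq.idenity} at scale $j=0$ inside the integral, writing $f(y)=\sum_{k\in\ZZ^2}\phi_{0,k}(y)f(y)$. Fixing $x$ and translating so that the relevant cube is centered near $x$, the Schwartz decay of $\Phi$ lets me bound $|\Phi(x-y)|$ by a rapidly decaying weight. The key point is that $\phi_{0,k}f$ is supported in the ball $B_1(k)$, so on each such ball $|\Phi(x-y)|\leq C_N(1+|x-k|)^{-N}$ for every $N$; summing the geometric-type series $\sum_{k}(1+|x-k|)^{-N}$ (finite for $N>2$) against $\sup_k\|\phi_{0,k}f\|_{L^1}$ yields \eqref{eq.Y-1}.

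For part (ii), the mechanism is that a ball at the coarse scale $i$ (radius $2^{-i}$) contains a bounded number of balls at the finer scale $j\geq i$ (radius $2^{-j}$). Concretely, I would use the identity $\phi_{i,k}=\phi_{i,k}\sum_{l\in\ZZ^2}\phi_{j,l}$ from \eqref{eq.idenity} and observe, via the support condition \eqref{eq.physical-dis} and the nesting of the balls $B_{2^{-i}}(k)$, that only those indices $l$ with $B_1(l)\cap 2^{j-i}B_1(k)\neq\emptyset$ contribute, a set whose cardinality is comparable to $2^{2(j-i)}$. Writing $\|\phi_{i,k}f\|_{L^q}^q\leq\sum_{l\in\Lambda_k}\|\phi_{i,k}\phi_{j,l}f\|_{L^q}^q$ with $\#\Lambda_k\lesssim 2^{2(j-i)}$ and bounding $\phi_{i,k}\leq 1$ pointwise gives $\|\phi_{i,k}f\|_{L^q}^q\leq C\,2^{2(j-i)}\sup_l\|\phi_{j,l}f\|_{L^q}^q$; taking the supremum over $k$ and the $q$-th root produces the factor $2^{2(j-i)/q}$ matching \eqref{eq.Y-2}, with the extra $2^{1/q}$ absorbing the overlap constant.

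The main obstacle I anticipate is pinning down the combinatorial counting in part (ii) cleanly, namely getting the sharp $2^{2(j-i)}$ bound on the number of fine cubes meeting a coarse cube rather than a wasteful larger power, since the estimate \eqref{eq.Y-2} is eventually applied with $j-i$ possibly large and any loss would propagate. This is handled by a careful volume/overlap argument: the supports $\mathrm{Supp}\,\phi_{j,l}$ have bounded overlap by \eqref{eq.physical-dis}, each has measure $\sim 2^{-2j}$, and they all sit inside a fixed dilate of $B_{2^{-i}}(k)$ of measure $\sim 2^{-2i}$, so a packing estimate forces $\#\Lambda_k\lesssim 2^{2(j-i)}$ uniformly in $k$. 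The remaining steps — interchanging sum and integral in (i), and the pointwise bounds $0\leq\phi_{j,k}\leq 1$ — are routine and require only the properties already recorded in Proposition \ref{prop-localization-in-physical}.
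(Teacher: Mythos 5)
Your part (i) is correct and is essentially the paper's own argument: the paper likewise inserts the partition of unity \eqref{eq.idenity} into the convolution and exploits the Schwartz decay of $\Phi$, the only difference being organizational — the paper splits into $|k-x|<5$ and $|k-x|\ge 5$, arranges the far cubes into dyadic annuli $2^i<|x-y|\le 2^{i+1}$, and uses $\sup_x|x^4\Phi(x)|<\infty$ together with the count of $O(2^{2i})$ unit cubes per annulus, whereas you sum $\sum_k(1+|x-k|)^{-N}$ directly with $N>2$. These are the same mechanism. For part (ii) the paper offers no argument at all (it says the estimate ``follows from the covering theorem directly''), so your packing count $\#\Lambda_k\lesssim 2^{2(j-i)}$ is exactly the intended content, spelled out.

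There is, however, one step in your part (ii) that fails as written: the inequality $\|\phi_{i,k}f\|_{L^q}^q\le\sum_{l\in\Lambda_k}\|\phi_{i,k}\phi_{j,l}f\|_{L^q}^q$ is false in general for $1<q<\infty$, since $\bigl|\sum_l g_l\bigr|^q\not\le\sum_l|g_l|^q$ pointwise. You must invoke the bounded overlap of the $\phi_{j,l}$ (by \eqref{eq.physical-dis}, at most an absolute number $M$ of them are nonzero at any given point), which gives $\bigl|\sum_l g_l\bigr|^q\le M^{q-1}\sum_l|g_l|^q$ and hence
\begin{equation*}
\|\phi_{i,k}f\|_{L^q}\le M^{\frac{q-1}{q}}\,(\#\Lambda_k)^{\frac1q}\,\sup_{l}\|\phi_{j,l}f\|_{L^q}\le C\,2^{\frac{2(j-i)}{q}}\sup_{l}\|\phi_{j,l}f\|_{L^q}
\end{equation*}
with $C$ an absolute constant. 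This does \emph{not} produce the literal constant $2^{\frac{1+2(j-i)}{q}}$ of \eqref{eq.Y-2}, and your remark that the extra $2^{1/q}$ absorbs the overlap constant is wrong for large $q$, where $M^{(q-1)/q}\to M$ while $2^{1/q}\to 1$. You should not try to recover that exact constant, because it is not attainable: at $q=\infty$ the statement would read $\sup_k\|\phi_{i,k}f\|_{L^\infty}\le\sup_k\|\phi_{j,k}f\|_{L^\infty}$, which fails for a narrow bump $f$ centered at a point $x_0$ with $2^jx_0$ the center of a lattice square (there $\max_l\phi_{j,l}(x_0)=\tfrac14$, while for $j-i$ large one can have $\phi_{i,k}(x_0)$ arbitrarily close to $1$). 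Since the paper only ever uses \eqref{eq.Y-2} up to unspecified constants, the version with $C\,2^{2(j-i)/q}$ is all that is needed; just make the overlap correction explicit in your write-up.
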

\begin{proof}
Estimate \eqref{eq.Y-2} follows from the covering theorem directly.
So we just show estimate for \eqref{eq.Y-1}. In view of \eqref{eq.idenity}, one can write
\begin{align*}
\Phi\ast f(x)=&\int_{\mathbb{R}^2}\Phi(x-y)f(y)\intd y=\sum_{k\in\mathbb{Z}^2}\int_{\mathbb{R}^2}\Phi(x-y)\phi_{0,k}(y)f(y)\intd y\nonumber\\
=&\sum_{|k-x|<5}\int_{\mathbb{R}^2}\Phi(x-y)\phi_{0,k}(y)f(y)\intd y+\sum_{|k-x|\geq5}\int_{\mathbb{R}^2}\Phi(x-y)\phi_{0,k}(y)f(y)\intd y.
\end{align*}
On one hand, it is obvious from the H\"older inequality that
\begin{equation}\label{eq.L-1}
\begin{split}
\sum_{|k-x|<5}\int_{\mathbb{R}^2}\Phi(x-y)\phi_{0,k}(y)f(y)\intd y\leq&\sum_{|k-x|<5}\|\Phi\|_{L^\infty(\RR^2)}\|\phi_{0,k}f\|_{L^1(\RR^2)}\\
\leq &C  \sup_{k\in\ZZ^2}\|\phi_{0,k}f\|_{L^1(\RR^2)}.
\end{split}
\end{equation}
On the other hand,  by the H\"older inequality and the property of support of $\phi$, we readily have
\begin{equation}\label{eq.L-2}
\begin{split}
&\sum_{|k-x|\geq5}\int_{\mathbb{R}^2}\Phi(x-y)\phi_{0,k}(y)f(y)\intd y\\
=&\sum_{i\geq 0}\int_{2^{i}<|x-y|\leq 2^{i+1}}\Phi(x-y)\sum_{k\in\mathbb{Z}^2}\phi_{0,k}(y)f(y)\intd y\\
\leq&\sum_{i\geq 0}2^{-4i}\int_{2^{i}<|x-y|\leq 2^{i+1}}|x-y|^4\Phi(x-y)\sum_{|k-x|\leq2^{i+2}}\phi_{0,k}(y)f(y)\intd y\\
\leq&\sum_{i\geq 0}2^{-4i}\sup_{x\in\mathbb{R}^2}\bigl|x^{4}\Phi(x)\bigr|\sum_{|k-x|\leq2^{i+2}}\|\phi_{0,k}f\|_{L^1(\RR^2)}\\
\leq&C\sup_{k\in\ZZ^2}\|\phi_{0,k}f\|_{L^1(\RR^2)}\sum_{i\geq 0}2^{-2i}\leq C\sup_{k\in\ZZ^2}\|\phi_{0,k}f\|_{L^1(\RR^2)}.
\end{split}
\end{equation}
Collecting estimate \eqref{eq.L-1} and estimate \eqref{eq.L-2} yields the desired result \eqref{eq.Y-1}.
\end{proof}
\begin{lemma}\label{local-Bernstein}
Let $j\in\mathbb{N}$ and $2^{j}\sup_{k\in\mathbb{Z}^2} \|\phi_{j,k}f \|_{L^2(\RR^2)}<\infty$. Then there holds
\begin{equation*}
\|S_jf\|_{L^\infty(\RR^2)}\leq C2^{j}\sup_{k\in\mathbb{Z}^2}\big\|\phi_{j,k}f\big\|_{L^2(\RR^2)},
\end{equation*}
where $C$ is a positive constant  independent of $j$ and $f$.
\end{lemma}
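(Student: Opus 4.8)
The plan is to realize $S_j f$ as a convolution and then reduce the estimate to the scale-zero statement already established in Lemma~\ref{lemma-Y}(i) via a rescaling argument. Since $S_j f = \chi(2^{-j}D)f$ and $\chi\in\mathcal{D}(\RR^2)$, its kernel $h:=\mathcal{F}^{-1}\chi$ belongs to $\mathcal{S}(\RR^2)$, and one has $S_j f = h_j * f$ with $h_j(x)=2^{2j}h(2^jx)$. First I would record the elementary scaling identity produced by the change of variables $y=2^{-j}z$ inside the convolution, namely
\[
(h_j * f)(x) = (h * \tilde f)(2^j x),\qquad \tilde f(x):=f(2^{-j}x),
\]
so that $\|S_j f\|_{L^\infty(\RR^2)} = \|h * \tilde f\|_{L^\infty(\RR^2)}$. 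This transports the whole problem to scale zero, where Lemma~\ref{lemma-Y}(i) applies directly with $\Phi = h$ and gives
\[
\|S_j f\|_{L^\infty(\RR^2)} \leq C\sup_{k\in\ZZ^2}\big\|\phi_{0,k}\tilde f\big\|_{L^1(\RR^2)}.
\]

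The second step is to carry the right-hand side back to scale $j$. Undoing the dilation by the substitution $x=2^j y$ yields $\|\phi_{0,k}\tilde f\|_{L^1(\RR^2)} = 2^{2j}\|\phi_{j,k}f\|_{L^1(\RR^2)}$, the Jacobian $2^{2j}$ being the only scaling factor it produces. To convert this $L^1$ norm into the desired $L^2$ norm I would exploit that $\phi_{j,k}=\phi(2^j\cdot-k)$, with $\phi$ supported in $B_1(0)$, is supported in a ball of radius $2^{-j}$, hence of measure $\leq C2^{-2j}$. Cauchy--Schwarz then gives $\|\phi_{j,k}f\|_{L^1(\RR^2)} \leq C2^{-j}\|\phi_{j,k}f\|_{L^2(\RR^2)}$. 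Taking the supremum over $k\in\ZZ^2$ and multiplying the two factors closes the estimate.

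The argument is essentially careful bookkeeping of scaling factors, so the one point I would flag as the main obstacle is precisely the numerology: the dilation Jacobian contributes $2^{2j}$, while the Cauchy--Schwarz loss coming from the $O(2^{-2j})$-support contributes $2^{-j}$, and these must combine to exactly the single power $2^{2j}\cdot 2^{-j}=2^j$ asserted in the statement. Everything else is mechanical once the scale-zero bound of Lemma~\ref{lemma-Y}(i) is available.

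An alternative that avoids the explicit rescaling is to insert the partition of unity $\sum_k\phi_{j,k}=1$ directly into $(h_j*f)(x)=\sum_{k\in\ZZ^2}\int h_j(x-y)\phi_{j,k}(y)f(y)\intd y$ and to repeat, now at scale $j$, the near/far splitting used in the proof of Lemma~\ref{lemma-Y}(i): the near terms are controlled by $\|h_j\|_{L^\infty(\RR^2)}=2^{2j}\|h\|_{L^\infty(\RR^2)}$ together with the same Cauchy--Schwarz bound, while the far terms are summed using the Schwartz decay $|h_j(x)|\leq C_N\,2^{2j}(1+2^j|x|)^{-N}$. This reproduces the same constant $2^j$ and makes the role of the support size transparent.
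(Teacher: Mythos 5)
Your proof is correct and follows essentially the same route as the paper: both realize $S_jf$ as a convolution with a Schwartz kernel, rescale to reduce to scale zero where Lemma~\ref{lemma-Y}(i) applies, and then convert the $L^1$ bound into the $L^2$ bound, with the dilation Jacobian and the Cauchy--Schwarz loss on the $2^{-j}$-radius support combining to the stated factor $2^j$. The only cosmetic difference is the order of operations (the paper applies H\"older at scale zero and then rescales the $L^2$ norm, while you rescale the $L^1$ norm and apply Cauchy--Schwarz at scale $j$), which is an equivalent bookkeeping of the same factors.
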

\begin{proof}
By changing a variable, one can conclude that
\begin{equation*}
|S_jf(x)|=\Big|2^{2j}\int_{\mathbb{R}^2}\Phi\big(2^{j}y\big)f(x-y)\intd y\Big|=\Big|\int_{\mathbb{R}^2}\varphi(y)f\Big(\frac{2^{j}x-y}{2^{j}}\Big)\intd y\Big|.
\end{equation*}
Let $f_j(x):=f(x/2^{j})$, then we have
\begin{equation*}
|S_jf(x)|=\Big|\int_{\mathbb{R}^2}\Phi(y)f_j(2^{j}x-y)\intd y\Big|=|\dot\Delta_0f_j(2^{j}x)|.
\end{equation*}
By using the first estimate in Lemma  \ref{lemma-Y} and the H\"older inequality, we know that
\begin{equation*}
\begin{split}
\|S_0f_j\|_{L^\infty(\RR^2)}\leq & C\sup_{k\in\mathbb{Z}^2}\big\|\phi_{0,k}f_j\big\|_{L^1(\RR^2)}\\
=&C\sup_{k\in\mathbb{Z}^2}\|\phi_{0,k}f(\cdot/2^j)\|_{L^1(\RR^2)}\leq C\sup_{k\in\mathbb{Z}^2}\big\|\phi_{0,k}f(\cdot/2^j)\big\|_{L^2(\RR^2)}.
\end{split}
\end{equation*}
This implies
\begin{equation}\label{eq.-local-1}
\|S_0f_j\|_{L^\infty(\RR^2)}\leq C\sup_{k\in\mathbb{Z}^2}\big\|\phi_{0,k}f(\cdot/2^j)\big\|_{L^2(\RR^2)}.
\end{equation}
Clearly, we have by changing a variable that
\begin{align*}
\Big(\int_{\RR^2}\Big|\phi_{0,k}(y)f\Big(\frac{y}{2^{j}}\Big)\Big|^{2}\intd y\Big)^{\frac{1}{2}}=&\Big(\int_{\RR^2}\Big|\phi{(y-k)}f\Big(\frac{y}{2^{j}}\Big)\Big|^{2}\intd y\Big)^{\frac{1}{2}}\\
=&\Big(2^{2j}\int_{\RR^2}\Big|\phi{\big(2^{j}y-k\big)}f(y)\Big|^{2}\intd y\Big)^{\frac{1}{2}}\\
\leq&2^{j}\sup_{k\in\mathbb{Z}^2}\big\|\phi_{j,k}f\big\|_{L^2(\RR^2)}.
\end{align*}
Inserting this estimate into \eqref{eq.-local-1} yields the desired result.
\end{proof}
\subsection{The principal eigenvalue of Lapace operator}

Next, we review some statements concerning on the principal eigenvalue of Lapace operator.
\begin{proposition}[Chap 6.5 Theorem 2, \cite{Evans-Book}]\label{Prop-Evans}
Assume that $U$ is open and bounded, and $\partial U$ is smooth. There hold that
\begin{enumerate}[\rm (i)]
  \item We have
  \begin{equation*}
\lambda_1:=\min\Big\{B[u,u]:=\int_{U}|\nabla u(x)|^2\intd x\,\Big|\,u\in H_0^1(U),\,\,\,\|u\|_{L^2(U)}=1\Big\}.
\end{equation*}
Furthermore, the above minimum is attained  for a function $w_1$,  positive in $U,$ which solves
  \begin{equation*}
  \left\{\begin{array}{ll}
  -\Delta w_1=\lambda_1w_1\quad\,\,\,\,\,\text{in}\quad U,\\
  w_1=0\quad\quad \quad\quad\quad\text{on}\quad\partial U.
  \end{array}\right.
  \end{equation*}
  \item Finally, if $u\in H_0^1(U)$ is any weak solution of
  \begin{equation*}
  \left\{\begin{array}{ll}
  -\Delta u=\lambda_1u\quad\,\,\,\quad\text{in}\quad U,\\
  u=0\quad\quad \quad\quad\quad\text{on}\quad\partial U,
  \end{array}\right.
  \end{equation*}
  then $u$ is a multiple of $w_1$.
\end{enumerate}
\end{proposition}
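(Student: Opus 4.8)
The plan is to establish part (i) by the \emph{direct method of the calculus of variations} and then deduce positivity and simplicity from the strong maximum principle. First I would check that $\lambda_1$ is well defined: the Poincar\'e inequality on the bounded domain $U$ gives $\|u\|_{L^2(U)}\leq C\|\nabla u\|_{L^2(U)}$ for $u\in H_0^1(U)$, so $B[u,u]\geq C^{-2}$ on the constraint set $\{\|u\|_{L^2(U)}=1\}$ and hence $0<\lambda_1<\infty$. Taking a minimizing sequence $\{u_m\}\subset H_0^1(U)$ with $\|u_m\|_{L^2(U)}=1$ and $B[u_m,u_m]\to\lambda_1$, the energy bound forces $\{u_m\}$ to be bounded in $H_0^1(U)$. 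I would then extract a subsequence converging weakly in $H_0^1(U)$ and, by the Rellich--Kondrachov theorem, strongly in $L^2(U)$ to a limit $w_1$. Strong $L^2$ convergence preserves the constraint $\|w_1\|_{L^2(U)}=1$, while weak lower semicontinuity of the Dirichlet energy gives $B[w_1,w_1]\leq\liminf_m B[u_m,u_m]=\lambda_1$; since $w_1$ is admissible the reverse inequality is automatic, so the infimum is attained at $w_1$.

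Next I would extract the Euler--Lagrange equation. For arbitrary $v\in H_0^1(U)$ the scalar function $\tau\mapsto B[w_1+\tau v,w_1+\tau v]/\|w_1+\tau v\|_{L^2(U)}^2$ has a minimum at $\tau=0$; differentiating and setting the derivative to zero yields $\int_U\nabla w_1\cdot\nabla v\intd x=\lambda_1\int_U w_1 v\intd x$ for every test function $v$, which is exactly the weak formulation of $-\Delta w_1=\lambda_1 w_1$ in $U$ with $w_1=0$ on $\partial U$. To obtain positivity I would use that $|w_1|$ is again admissible and, because $\big|\nabla|w_1|\big|=|\nabla w_1|$ almost everywhere, satisfies $B[|w_1|,|w_1|]=\lambda_1$; hence $|w_1|$ is also a minimizer, so one may replace $w_1$ by the nonnegative eigenfunction $|w_1|$. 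Interior elliptic regularity makes this eigenfunction smooth in $U$, and since it is nonnegative and solves $-\Delta w_1=\lambda_1 w_1\geq 0$, the strong maximum principle (equivalently Harnack's inequality) forces $w_1>0$ throughout the connected open set $U$.

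For part (ii) I would argue by sign considerations. If $u\in H_0^1(U)$ is any weak solution with eigenvalue $\lambda_1$, then $B[u,u]=\lambda_1\|u\|_{L^2(U)}^2$, so after normalization $u$ is itself a minimizer; the reflection argument above shows $|u|$ is a positive eigenfunction, whence $u$ cannot vanish in $U$ and therefore has a constant sign, say $u>0$ (after multiplying by $-1$ if necessary). The eigenspace for $\lambda_1$ is a vector space, so $w:=w_1-t\,u$ is again a $\lambda_1$-eigenfunction for every $t\in\RR$; choosing $t=\big(\int_U w_1\intd x\big)\big/\big(\int_U u\intd x\big)>0$ makes $\int_U w\intd x=0$. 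If $w\not\equiv0$ it would, by the same argument, have a constant sign, which is incompatible with zero mean; hence $w\equiv0$, i.e. $u$ is a multiple of $w_1$.

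The routine ingredients (Poincar\'e, Rellich--Kondrachov, weak lower semicontinuity, the variational identity) go through mechanically; the genuine content sits in the positivity step. The main obstacle is justifying $w_1>0$ rather than merely $w_1\geq0$: this requires upgrading the weak solution to a continuous one via interior elliptic regularity and then invoking the strong maximum principle, and it is here that the connectedness of $U$ is essential --- without it the principal eigenfunction need not be of one sign and simplicity can fail. In the intended application $U$ is a ball, so connectedness and smoothness of $\partial U$ hold automatically.
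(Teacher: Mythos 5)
Your proof is correct, but note that the paper itself gives no proof of this proposition: it is quoted as a black box from Evans (Chapter 6.5, Theorem 2) and used only with $U=B_2(0)$ to produce the positive eigenfunction $\varphi$ of Lemma \ref{Lem-En}. Comparing your argument with the proof the citation points to: Evans first constructs the full orthonormal basis of Dirichlet eigenfunctions via the spectral theorem for the compact self-adjoint solution operator $(-\Delta)^{-1}$ on $L^2(U)$, and then reads off the variational characterization of $\lambda_1$ by expanding an arbitrary $u\in H_0^1(U)$ in that basis; you instead obtain the minimizer directly by the direct method (Poincar\'e, Rellich--Kondrachov, weak lower semicontinuity) and recover the equation as the Euler--Lagrange condition for the Rayleigh quotient. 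Your route is more elementary and self-contained (no spectral theory needed), at the cost of not giving the higher eigenvalues, which the paper never uses anyway. The genuinely delicate steps --- that $|w_1|$ (or the pair $w_1^{\pm}$) is again a minimizer, hence an eigenfunction, so that elliptic regularity plus the strong maximum principle yields strict positivity, and that two everywhere-positive eigenfunctions admit a zero-mean linear combination which must then vanish identically --- coincide with Evans' own arguments, so the substance matches the cited proof. Your closing remark is also a genuine catch: connectedness of $U$ is indispensable both for $w_1>0$ and for simplicity in part (ii), Evans assumes it explicitly, and the paper's restatement silently drops it; since the paper only ever applies the result on a ball, no harm results, but the proposition as literally stated in the paper is false for disconnected $U$.
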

Next, we will introduce an important property of the solution of  the eigenvalue problem, which is the main ingredient of our proof.
\begin{lemma}\label{Lem-En}
Let $f\in\mathcal{S}(\RR^2)$ and $\varphi$ be the solution of the above eigenvalue problem
\begin{equation}\label{eq.eigenvalue}
\left\{\begin{array}{ll}
-\Delta\varphi=\lambda_1 \varphi\qquad x\in B_{2}(0),\\
\varphi|_{\partial B_{2}(0)}=0
\end{array}\right.
\end{equation}
satisfying $\varphi(x)=0$ for all $x\in\RR^2\setminus B_2(0)$. Then, we have that for $\varphi_r=\varphi(x/r)$,
\begin{equation*}
-\int_{\RR^2}\Delta f f\varphi_r\intd x=\frac{\lambda_1}{2r^{2}}\int_{\RR^2}f^{2}\varphi_r\intd x+\int_{\RR^2}\varphi_r|\nabla f|^2\intd x+\frac12\int_{\partial B_{2r}(0)}f^{2}\nabla\varphi_{r}\cdot n\intd S.
\end{equation*}
\end{lemma}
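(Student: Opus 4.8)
The plan is to prove the identity by an integration-by-parts argument, exploiting the fact that $\varphi_r$ solves a rescaled eigenvalue problem. First I would compute how the Laplacian acts on the rescaled function. Since $\varphi$ satisfies $-\Delta\varphi=\lambda_1\varphi$ on $B_2(0)$, the chain rule gives $\Delta\varphi_r(x)=r^{-2}(\Delta\varphi)(x/r)=-\lambda_1 r^{-2}\varphi_r(x)$ on $B_{2r}(0)$, and $\varphi_r$ together with its gradient vanishes outside $\overline{B_{2r}(0)}$ because $\varphi$ is supported in $\overline{B_2(0)}$. This is the structural fact that will produce the first term on the right-hand side.

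Next I would apply Green's identity to the left-hand side. The natural move is to write
\begin{equation*}
-\int_{\RR^2}\Delta f\, f\varphi_r\intd x=\int_{\RR^2}\nabla f\cdot\nabla(f\varphi_r)\intd x-\int_{\partial B_{2r}(0)}f\varphi_r\,\nabla f\cdot n\intd S,
\end{equation*}
where the boundary term drops out since $\varphi_r=0$ on $\partial B_{2r}(0)$. Expanding $\nabla(f\varphi_r)=\varphi_r\nabla f+f\nabla\varphi_r$ splits the volume integral into $\int\varphi_r|\nabla f|^2\intd x$, which is already one of the target terms, plus $\int f\,\nabla f\cdot\nabla\varphi_r\intd x=\tfrac12\int\nabla(f^2)\cdot\nabla\varphi_r\intd x$.

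Then I would integrate this last term by parts once more, moving the derivative off $\varphi_r$:
\begin{equation*}
\frac12\int_{\RR^2}\nabla(f^2)\cdot\nabla\varphi_r\intd x=-\frac12\int_{\RR^2}f^2\Delta\varphi_r\intd x+\frac12\int_{\partial B_{2r}(0)}f^2\,\nabla\varphi_r\cdot n\intd S.
\end{equation*}
Substituting the eigenvalue relation $\Delta\varphi_r=-\lambda_1 r^{-2}\varphi_r$ into the volume term converts it into $\tfrac{\lambda_1}{2r^2}\int f^2\varphi_r\intd x$, and the surviving surface integral is exactly the boundary term appearing in the statement. Collecting the three pieces gives the claimed identity.

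The main obstacle is bookkeeping of the boundary terms and justifying the integrations by parts despite the fact that $\varphi_r$ is only a weak (Sobolev) solution of the eigenvalue problem on a ball with a normal-derivative jump across $\partial B_{2r}(0)$. Because $\varphi$ vanishes on $\partial B_2(0)$ but its normal derivative need not, $\nabla\varphi_r$ has a jump across the sphere, so $\Delta\varphi_r$ contains a singular surface-measure contribution; the point is that this distributional boundary layer is precisely what generates the $\tfrac12\int_{\partial B_{2r}(0)}f^2\nabla\varphi_r\cdot n\intd S$ term rather than being an error. I would therefore carry out the computation on the open ball $B_{2r}(0)$, where $\varphi_r$ is smooth by elliptic regularity (Proposition \ref{Prop-Evans} gives $w_1$ positive and smooth in the interior), keep the explicit boundary integrals from Green's formula at each step, and only use $\varphi_r|_{\partial B_{2r}(0)}=0$ to kill the terms carrying a factor of $\varphi_r$ on the boundary. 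Since $f\in\mathcal{S}(\RR^2)$, all integrands are integrable and every surface integral is finite, so no decay or approximation issues arise.
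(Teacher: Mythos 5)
Your proof is correct and follows essentially the same route as the paper: two integrations by parts on $B_{2r}(0)$ to move derivatives from $f$ onto $\varphi_r$, using $\varphi_r|_{\partial B_{2r}(0)}=0$ to kill one boundary term, keeping the $\frac12\int_{\partial B_{2r}(0)}f^2\nabla\varphi_r\cdot n\intd S$ term, and invoking the rescaled eigenvalue relation $-\Delta\varphi_r=\lambda_1 r^{-2}\varphi_r$. The only cosmetic difference is that the paper first applies the pointwise identity $f\Delta f=\tfrac12\Delta(f^2)-|\nabla f|^2$ and then integrates $\int\Delta(f^2)\varphi_r\intd x$ by parts twice, whereas you apply Green's identity first and the product rule afterward; the intermediate quantities and the final bookkeeping are identical.
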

\begin{proof}
Integration by parts yields
\begin{equation}\label{eq.newtest}
\begin{split}
-\int_{\RR^2}\Delta ff\varphi_r\intd x=&-\frac12\int_{\RR^2}\left(\Delta ff+\Delta ff\right)\varphi_r\intd x\\
=&-\frac12\int_{\RR^2}\big(\Delta f^2-2|\nabla f|^2\big)\varphi_r\intd x\\
=&-\frac12\int_{\RR^2} \Delta f^2\varphi_r\intd x+\int_{\RR^2}|\nabla f|^2\varphi_r\intd x.
\end{split}
\end{equation}
Integrating by parts again and using that $\varphi_r$ solves the following eigenvalue problem
\begin{equation*}
\left\{\begin{array}{ll}
-\Delta\varphi_r=\frac{\lambda_1}{r^{2}} \varphi_r\quad x\in\RR^2,\\
\varphi_r|_{\partial B_{2r}(0)}=0,
\end{array}\right.
\end{equation*}
we easily find that
\begin{equation*}
\begin{split}
-\frac12\int_{\RR^2} \Delta f^2\varphi_r\intd x=&-\frac12\int_{\RR^2} f^2\Delta \varphi_r\intd x+\frac12\int_{\partial B_{2r}(0)}f^{2}\nabla\varphi_{r}\cdot n\intd S\\
=&\frac{\lambda_1}{2r^{2}}\int_{\RR^2}f^{2}\varphi_r\intd x+\frac12\int_{\partial B_{2r}(0)}f^{2}\nabla\varphi_{r}\cdot n\intd S.
\end{split}
\end{equation*}
Plugging this estimate into \eqref{eq.newtest} yields the desired result and then we finish the proof of the lemma.
\end{proof}
\begin{corollary}\label{coro-test-prop}
Let $\varphi_{j,k}:=\varphi(2^{j}x-k)$ with $\varphi$ defined in Lemma \ref{Lem-En}. Then, we have
\begin{enumerate}[\rm(i)]
   \item  for $f\in\mathcal{S}(\RR^2)$,
\begin{equation}\label{eq.local-smooth}
\begin{split}
-\int_{\RR^2}\Delta f f\varphi_{j,k}\intd x=&\frac{\lambda_1 2^{2j}}{2}\int_{\RR^2}f^{2}\varphi_{j,k}\intd x+\int_{\RR^2}\varphi_{j,k}|\nabla f|^2\intd x\\
&+\frac12\int_{\partial B_{2^{-j+1}}(k)}f^{2}\nabla\varphi_{j,k}\cdot n\intd S.
\end{split}
\end{equation}
\item  there exits two positive constants $M_l$ and $M_u$ such that
\begin{equation}\label{eq.MlMu}
M_l\phi_{j,k}\leq\varphi_{j,k}\phi_{j,k}\leq M_u\phi_{j,k}.
\end{equation}
\item orthogonal property:
\begin{equation*}
\varphi_{j,i}\phi_{j,k}=0\quad\text{for}\quad|i-k|\geq5.
\end{equation*}
 \end{enumerate}
\end{corollary}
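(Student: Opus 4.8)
The plan is to treat the three items separately, deriving each from the structure already in place. For part (i), I would observe that $\varphi_{j,k}(x)=\varphi(2^jx-k)=\varphi\big(2^j(x-2^{-j}k)\big)$ is merely a rescaling and translation of the profile studied in Lemma \ref{Lem-En}. The chain rule gives $-\Delta\varphi_{j,k}=\lambda_1 2^{2j}\varphi_{j,k}$, so $\varphi_{j,k}$ solves the eigenvalue problem on the shifted ball of radius $2^{-j+1}$ with eigenvalue $\lambda_1 2^{2j}=\lambda_1/r^2$ for $r=2^{-j}$, the associated boundary sphere being exactly the one appearing in \eqref{eq.local-smooth}. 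Since both the Laplacian and the integration-by-parts identity are translation invariant, I would simply repeat the computation of Lemma \ref{Lem-En} with $\varphi_r$ replaced by $\varphi_{j,k}$ and $\lambda_1/r^2$ replaced by $\lambda_1 2^{2j}$; the two integrations by parts then reproduce \eqref{eq.local-smooth} verbatim, the only change being the bookkeeping of the factor $2^{2j}$ and of the boundary term.

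For part (iii) I would argue purely by support. The factor $\varphi_{j,i}$ is supported where $|2^jx-i|<2$ and $\phi_{j,k}$ where $|2^jx-k|\leq1$. If both are nonzero at some $x$, the triangle inequality forces $|i-k|\leq|2^jx-i|+|2^jx-k|<3$, so the two supports are disjoint as soon as $|i-k|\geq5$, and the product vanishes.

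The remaining, and slightly more substantive, step is part (ii), which is where I expect the only genuine point of care. The inequality is trivial off the support of $\phi_{j,k}$, so it suffices to bound $\varphi_{j,k}$ above and below by positive constants on that support. After the change of variables $y=2^jx-k$, the support of $\phi_{j,k}$ corresponds to $y\in\overline{B_1(0)}$, which is compactly contained in the open ball $B_2(0)$. Here I would invoke Proposition \ref{Prop-Evans}: the principal eigenfunction $\varphi=w_1$ is continuous and \emph{strictly positive} throughout $B_2(0)$, hence it attains a strictly positive minimum $M_l:=\min_{\overline{B_1(0)}}\varphi>0$ and a finite maximum $M_u:=\max_{\overline{B_1(0)}}\varphi$ on the compact set $\overline{B_1(0)}$. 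Consequently $M_l\leq\varphi_{j,k}(x)\leq M_u$ on $\mathrm{Supp}\,\phi_{j,k}$, and multiplying through by the nonnegative weight $\phi_{j,k}$ yields \eqref{eq.MlMu}. The constants $M_l,M_u$ are manifestly independent of $j$ and $k$ precisely because the change of variables removes all scaling and translation. The crux is therefore not a computation but the appeal to the interior positivity of the principal eigenfunction, which is exactly what Proposition \ref{Prop-Evans} supplies.
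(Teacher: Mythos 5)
Your proof is correct and follows exactly the argument the paper intends: the paper states this corollary without proof as an immediate consequence of Lemma \ref{Lem-En} (by the scaling/translation you carry out in (i)), of the interior positivity of the principal eigenfunction from Proposition \ref{Prop-Evans} together with compactness of $\overline{B_1(0)}\subset B_2(0)$ (your (ii)), and of the support/triangle-inequality argument (your (iii)). Nothing is missing, and the constants $M_l,M_u$ are, as you note, independent of $j$ and $k$ by the change of variables.
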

With this test function in hand, we will give a refined $L^2$-estimate for smooth solution of the linear heat equation.
\begin{proposition}\label{Prop-heat-im}
Let the scalar function $f$ be a smooth solution of the following linear heat equation in the plane:
\begin{equation}\label{linear-heat}
\partial_tf-\Delta f=0,\quad f|_{t=0}=f_0.
\end{equation}
Then, there hold that
\begin{equation*}
\|f(t)\|_{L^2(\RR^2)}^2+2\int_0^t\|\nabla f(\tau)\|_{L^2(\RR^2)}^2\intd\tau=\|f_0\|^2_{L^2(\RR^2)}.
\end{equation*}
and
\begin{equation}\label{estimate-new}
\lambda_1 \sup_{j\in\ZZ}2^{2j}\int_0^t\sum_{k\in\ZZ^2}\big\|\sqrt{\phi_{j,k}}f(\tau)\big\|_{L^2(\RR^2)}^2\,\intd\tau
\leq  C\big(t,\,\|f_0\|_{L^2(\RR^2)}\big).
\end{equation}
\end{proposition}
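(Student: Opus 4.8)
The first identity is the standard energy balance. Multiplying \eqref{linear-heat} by $f$ and integrating over $\RR^2$, integration by parts turns $-\int_{\RR^2}\Delta f\,f\intd x$ into $\int_{\RR^2}|\nabla f|^2\intd x$, so that $\frac12\frac{d}{dt}\|f\|^2_{L^2(\RR^2)}+\|\nabla f\|^2_{L^2(\RR^2)}=0$; integrating in time gives the claimed equality. I would record from it the two global bounds $\|f(\tau)\|_{L^2(\RR^2)}\le\|f_0\|_{L^2(\RR^2)}$ and $\int_0^t\|\nabla f(\tau)\|^2_{L^2(\RR^2)}\intd\tau\le\tfrac12\|f_0\|^2_{L^2(\RR^2)}$, since these are exactly what must feed the localized estimate.

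For \eqref{estimate-new} the plan is to rerun the energy method against the localized weight $\varphi_{j,k}$ rather than against $1$. Testing \eqref{linear-heat} with $\varphi_{j,k}f$ gives $\frac12\frac{d}{dt}\int_{\RR^2}\varphi_{j,k}f^2\intd x=\int_{\RR^2}\varphi_{j,k}f\Delta f\intd x$, and I would substitute the exact identity \eqref{eq.local-smooth} of Corollary \ref{coro-test-prop} for the right-hand side to obtain, for every $(j,k)$, the balance
\[
\frac12\frac{d}{dt}\int_{\RR^2}\varphi_{j,k}f^2\intd x+\frac{\lambda_1 2^{2j}}{2}\int_{\RR^2}\varphi_{j,k}f^2\intd x+\int_{\RR^2}\varphi_{j,k}|\nabla f|^2\intd x+\frac12\int_{\partial B_{2^{-j+1}}(k)}f^2\nabla\varphi_{j,k}\cdot n\intd S=0.
\]
Integrating in $\tau\in[0,t]$ and summing over $k\in\ZZ^2$, the surviving terms are the localized dissipation, the weighted mass $\frac{\lambda_1 2^{2j}}{2}\int_0^t\sum_k\int_{\RR^2}\varphi_{j,k}f^2\intd x\intd\tau$, the initial contribution $\frac12\sum_k\int_{\RR^2}\varphi_{j,k}f_0^2\intd x$, and a sum of boundary integrals over the spheres $\partial B_{2^{-j+1}}(k)$. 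The initial term is harmless: the balls $B_{2^{-j+1}}(k)$ overlap a bounded number of times and $\varphi$ is bounded, so $\sum_k\int_{\RR^2}\varphi_{j,k}f_0^2\intd x\le C\|f_0\|^2_{L^2(\RR^2)}$ uniformly in $j$.

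The boundary sum is the crux. I would use $|\nabla\varphi_{j,k}|\le C2^{j}$ on the sphere together with the scaled trace inequality on a ball of radius $r\sim 2^{-j}$, in the form $\int_{\partial B_r}f^2\intd S\le C\big(r^{-1}\int_{B_r}f^2+r\int_{B_r}|\nabla f|^2\big)$, and then sum using the bounded overlap; this controls the boundary sum by $C\int_0^t\big(2^{2j}\|f\|^2_{L^2(\RR^2)}+\|\nabla f\|^2_{L^2(\RR^2)}\big)\intd\tau$. The delicate feature is that the $2^{2j}$-weighted mass produced here is of the same order as the quantity being estimated, so it cannot simply be moved to the right. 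I would therefore keep the comparison fully localized: pass from the eigenfunction weight $\varphi_{j,k}$ to the partition weight $\phi_{j,k}$ through \eqref{eq.MlMu}, which yields $M_l\int_{\RR^2}\phi_{j,k}f^2\intd x\le\int_{\RR^2}\varphi_{j,k}f^2\intd x$, and bound the trace contribution over the larger ball $B_{2^{-j+1}}(k)$ by finitely many neighbouring cells via the partition identity \eqref{eq.idenity} and the covering estimate \eqref{eq.Y-2}.

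Closing the estimate then rests on absorbing the scale-$2^{2j}$ localized mass appearing on both sides and on the two global energy bounds from the first part: the $\int_0^t\|\nabla f\|^2_{L^2(\RR^2)}\intd\tau$ piece is immediately $\le\tfrac12\|f_0\|^2_{L^2(\RR^2)}$, and the initial piece is $C\|f_0\|^2_{L^2(\RR^2)}$, so after taking the supremum over $j$ one is left with a bound of the form $C(t,\|f_0\|_{L^2(\RR^2)})$. I expect the scaled trace estimate and precisely this absorption/reconciliation step — matching the $2^{2j}$-weighted localized mass on the two sides without circularity, using the gap between the supports of $\varphi_{j,k}$ and $\phi_{j,k}$ in \eqref{eq.MlMu} — to be the main obstacle; the remainder is the routine energy bookkeeping already outlined in the introduction.
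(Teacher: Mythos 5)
Your proposal tracks the paper's proof through its first stages: the energy equality, testing \eqref{linear-heat} with $\varphi_{j,k}f$, substituting the identity \eqref{eq.local-smooth} to get \eqref{eq.diff-heat}, and controlling the initial term by bounded overlap of the cells. The two arguments part ways at the boundary integral, and there your write-up has a genuine gap. Your scaled trace inequality $\int_{\partial B_r}f^2\intd S\le C\big(r^{-1}\int_{B_r}f^2+r\int_{B_r}|\nabla f|^2\big)$ is the correct one, and together with $|\nabla\varphi_{j,k}|\sim 2^{j}$ on the sphere it gives, after summation in $k$, the bound $C\big(2^{2j}\|f\|^2_{L^2(\RR^2)}+\|\nabla f\|^2_{L^2(\RR^2)}\big)$. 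You then correctly observe that the term $C2^{2j}\|f\|^2_{L^2}$ is of the same order as the quantity being estimated, and you defer its removal to an ``absorption/reconciliation step'' that is never carried out. That step is not routine bookkeeping; it cannot be carried out at all. The constant $C$ comes from the trace theorem and from $|\nabla\varphi|$ on $\partial B_2$ and has no relation to $\lambda_1$, while by \eqref{eq.idenity} and bounded overlap the coercive term obeys $\tfrac{\lambda_1}{2}2^{2j}\sum_k\int\varphi_{j,k}f^2\le C'\lambda_1 2^{2j}\|f\|^2_{L^2}$; so the term you must absorb and the term you would absorb it into are the same quantity $2^{2j}\|f\|^2_{L^2}$ up to fixed, non-small constants, and passing between $\varphi_{j,k}$ and $\phi_{j,k}$ via \eqref{eq.MlMu} only changes those constants. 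Worse, if the absorption could be closed, then since $\sum_k\phi_{j,k}\equiv1$ the conclusion \eqref{estimate-new} would imply $\lambda_1\sup_{j}2^{2j}\int_0^t\|f(\tau)\|^2_{L^2(\RR^2)}\intd\tau\le C\big(t,\|f_0\|_{L^2}\big)$, which fails for every nonzero solution; so no amount of constant-chasing within your scheme can succeed.

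For comparison, the paper never meets this obstacle because it handles the trace step differently: in \eqref{eq.trace1} it applies the embedding $W^{1,\frac85}\big(B_{2^{-j+1}}(k)\big)\hookrightarrow L^4\big(\partial B_{2^{-j+1}}(k)\big)$ with a constant treated as uniform in the radius $2^{-j+1}$, so that after multiplication by $\|\nabla\varphi_{j,k}\|_{L^2(\partial B_{2^{-j+1}}(k))}=C2^{j/2}$ the boundary term is bounded in \eqref{eq.boundary-final} by $C\|f\|^2_{H^1(B_{2^{-j+1}}(k))}$ with no factor $2^{2j}$ at all; summing in $k$ then leaves only $C\int_0^t\|f\|^2_{H^1}\intd\tau$, which the energy equality controls, and no absorption is ever needed. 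Your own scaling computation is precisely what shows that such a radius-uniform trace constant is not available: testing with $f\equiv1$ near a cell, the boundary integral per cell equals $\tfrac{\lambda_1}{2}\int\varphi$, a constant independent of $j$ (by the divergence theorem and the eigenvalue equation), whereas $C\|f\|^2_{H^1(B_{2^{-j+1}}(k))}\sim C2^{-2j}\to0$. So the single point at which your argument deviates from the paper's is the single point on which the whole estimate hinges, and as written your proposal leaves that point open.
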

\begin{remark}
Since the convergence of the series is not uniform pertaining to parameter~$j$,  estimate   \eqref{estimate-new}  does not imply
\begin{equation*}
\lambda_1 \sup_{j\in\ZZ}2^{2j}\int_0^t\big\|f(\tau)\big\|_{L^2(\RR^2)}^2\,\intd\tau
\leq  C\big(t,\,\|f_0\|_{L^2(\RR^2)}\big).
\end{equation*}
\end{remark}
\begin{proof}[Proof of Proposition \ref{Prop-heat-im}]
Firstly, the standard $L^2$-inner argument enables us to conclude that for all $t\geq0,$
\begin{equation}\label{eq.L2heat}
\|f(t)\|_{L^2(\RR^2)}^2+2\int_0^t\|\nabla f(\tau)\|_{L^2(\RR^2)}^2\intd\tau=\|f_0\|^2_{L^2(\RR^2)}.
\end{equation}
Multiplying \eqref{linear-heat} by $\varphi_{j,k}f$, we see that
\[\frac12\partial_t\big(\varphi_{j,k}f^2\big)-\varphi_{j,k}f\Delta f=0.\]
Integrating the above equality in space variable over $\RR^2$ and using equality \eqref{eq.local-smooth}, one has
\begin{equation}\label{eq.diff-heat}
\begin{split}
&\frac12\dtd\big\|\sqrt{\varphi_{j,k}}f(t)\big\|^2_{L^2(\RR^2)} +\frac{\lambda_1 2^{2j}}{2}\int_{\RR^2}f^{2}\varphi_{j,k}\intd x+\int_{\RR^2}\varphi_{j,k}|\nabla f|^2\intd x\\
=&-\frac12\int_{\partial B_{2^{-j+1}}(k)}f^{2}\nabla\varphi_{j,k}\cdot n\intd S.
\end{split}
\end{equation}
By the Cauchy-Schwartz inequality, we can infer that
\begin{equation}\label{eq.boundary}
\begin{split}
- \int_{\partial B_{2^{-j+1}}(k)}f^{2}\nabla\varphi_{j,k}\cdot n\intd S \leq&\big\|f^{2}\big\|_{L^2\left(\partial B_{2^{-j+1}}(k)\right)}\|\nabla\varphi_{j,k}\|_{L^2\left(\partial B_{2^{-j+1}}(k)\right)}\\
\leq&\|f\|^{2}_{L^4\left(\partial B_{2^{-j+1}}(k)\right)}\|\nabla\varphi_{j,k}\|_{L^2\left(\partial B_{2^{-j+1}}(k)\right)}.
\end{split}
\end{equation}
By the trace theorem (see for example Theorem 5.36 in \cite{Adams}) and the H\"older inequality, we easily find that
\begin{equation}\label{eq.trace1}
\begin{split}
 \|f \|_{L^4\left(\partial B_{2^{-j+1}}(k)\right)}\leq &C\|f\|_{W^{1,\frac85}\left( B_{2^{-j+1}}(k)\right)}\\
 \leq& C2^{-\frac{j}{4}}\|f \|_{H^{1}\left( B_{2^{-j+1}}(k)\right)}\\
 \leq& C2^{-\frac{j}{4}}\sum_{|k'-k|\leq2}\left(\big\|\sqrt{\phi_{j,k'}}f\big \|_{L^{2}\left( \RR^2\right)}+\big\|\sqrt{\phi_{j,k'}}\nabla f\big \|_{L^{2}\left( \RR^2\right)}\right),
 \end{split}
\end{equation}
where $W^{1,\frac85}(\Omega)$ is the general Sobolev space.

On the other hand, we find that
\begin{equation}\label{eq.trace2}
\|\nabla\varphi_{j,k}\|_{L^2\left(\partial B_{2^{-j+1}}(k)\right)}=2^{j}\Big(\int_{\partial B_{2^{-j+1}}(0)}|\nabla\varphi|^{2}(2)\intd S\Big)^{\frac12}=C2^{\frac{j}{2}}.
\end{equation}
Inserting estimates \eqref{eq.trace1} and \eqref{eq.trace2} into \eqref{eq.boundary} leads to
\begin{equation}\label{eq.boundary-final}
\begin{split}
- \int_{\partial B_{2^{-j}}(k)}f^{2}\nabla\varphi_{j,k}\cdot n\intd S \leq&
C\|f\|^{2}_{H^1\left(  B_{2^{-j+1}}(k)\right)}\\
 \leq&
C\sum_{|k'-k|\leq2}\left(\big\|\sqrt{\phi_{j,k'}}f\big \|_{L^{2}\left( \RR^2\right)}+\big\|\sqrt{\phi_{j,k'}}\nabla f\big \|_{L^{2}\left( \RR^2\right)}\right).
\end{split}
\end{equation}
With this estimate, summing equality \eqref{eq.diff-heat} over $k\in\ZZ^2$ and integrating the resulting equality with respect to time $t$, we immediately have
\begin{equation*}
\begin{split}
&\lambda_1 2^{2j}\int_0^t\sum_{k\in\ZZ^2}\big\|\sqrt{\varphi_{j,k}}f(\tau)\big\|^2_{L^2(\RR^2)}\intd\tau\\
\leq&\sum_{k\in\ZZ^2}\big\|\sqrt{\varphi_{j,k}}f_0\big\|^2_{L^2(\RR^2)}+C\sum_{k\in\ZZ^2}\sum_{|k'-k|\leq2}\int_0^t\left(\big\|\sqrt{\phi_{j,k'}}f(\tau)\big \|_{L^{2}\left( \RR^2\right)}+\big\|\sqrt{\phi_{j,k'}}\nabla f(\tau)\big \|_{L^{2}\left( \RR^2\right)}\right)\intd\tau.
\end{split}
\end{equation*}
By the discrete Young inequality and \eqref{eq.idenity}, one has
\begin{equation*}
\begin{split}
\sum_{k\in\ZZ^2}\big\|\sqrt{\varphi_{j,k}}f_0\big\|^2_{L^2(\RR^2)}=&\sum_{k\in\ZZ^2}\sum_{|k-k'|<5}\big\|\sqrt{\varphi_{j,k}}\sqrt{\phi_{j,k'}}f_0\big\|^2_{L^2(\RR^2)}\\
\leq&C\sum_{k\in\ZZ^2}\sum_{|k-k'|<5}\big\|\sqrt{\phi_{j,k'}}f_0\big\|^2_{L^2(\RR^2)}\\
\leq&C\sum_{k'\in\ZZ^2}\big\|\sqrt{\phi_{j,k'}}f_0\big\|^2_{L^2(\RR^2)}=C\|f_0\|_{L^2(\RR^2)}^2.
\end{split}
\end{equation*}
On the other hand, according to the property of \eqref{eq.MlMu}, we easily find that
\begin{equation*}
\big\|\sqrt{\phi_{j,k}}f\big\|^2_{L^\infty_tL^2(\RR^2)}\leq C\big\|\sqrt{\phi_{j,k}}\sqrt{\varphi_{j,k}}f\big\|^2_{L^\infty_tL^2(\RR^2)},
\end{equation*}
which implies that
\begin{equation*}
\sum_{k\in\ZZ^2}\big\|\sqrt{\phi_{j,k}}f\big\|^2_{L^\infty_tL^2(\RR^2)}\leq C\sum_{k\in\ZZ^2}\big\|\sqrt{\varphi_{j,k}}f\big\|^2_{L^\infty_tL^2(\RR^2)}.
\end{equation*}
In a similar fashion as above, it is easy to conclude that
\begin{align*}
  \lambda_1 2^{2j}\int_0^t\sum_{k\in\ZZ^2}\big\|\sqrt{\phi_{j,k}}f(\tau)\big\|_{L^2(\RR^2)}^2\intd\tau
\leq C\lambda_1 2^{2j}\int_0^t\sum_{k\in\ZZ^2}\big\|\sqrt{\varphi_{j,k}}f(\tau)\big\|^2_{L^2(\RR^2)}\intd\tau .
\end{align*}
Collecting these estimates above, we readily have
\begin{equation*}
\begin{split}
 \lambda_1 2^{2j}\int_0^t\sum_{k\in\ZZ^2}\big\|\sqrt{\phi_{j,k}}f(\tau)\big\|_{L^2(\RR^2)}^2\intd\tau
\leq C\|f_0\|_{L^2(\RR^2)}^2+C\int_0^t\|f(\tau)\|_{H^1(\RR^2)}^2\intd\tau.
\end{split}
\end{equation*}
Taking supremum the above inequality over $j\in\ZZ$ together with estimate \eqref{eq.L2heat} gives the required result.
\end{proof}
In the last part of this section, we are devoted to show a estimate for the tri-linear term which will be used in the proof.
\begin{lemma}\label{Lem-tri}
There holds that
\begin{equation*}
\begin{split}
\int_0^t\int_{\mathbb{R}^2}fgh\intd x\mathrm{d}\tau\leq & \|f\|_{L^2_tL^2(\RR^2)}\|\nabla g\|_{L^2_tL^2(\RR^2)}\|h\|_{L^\infty_tL^2(\RR^2)}\\
&+2N\sup_{k\in\ZZ}\|\dot{S}_{k}g\|_{L^2_tL^\infty(\RR^2)}\|f\|_{L^2_tL^2(\RR^2)}\|h\|_{L^\infty_tL^2(\RR^2)}\\
&+\sup_{k\in\ZZ}\|\dot{S}_{k}g\|_{L^2_tL^\infty(\RR^2)}\|h\|_{\widetilde{L}^\infty_t\dot{B}^0_{2,2}(\RR^2)}\Big(\sum_{|k|\geq N}\|\dot{\Delta}_kf\|^2_{L^2_tL^2(\RR^2)}\Big)^{\frac12}.
\end{split}
\end{equation*}
\end{lemma}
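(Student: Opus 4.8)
The plan is to run Bony's homogeneous decomposition (Definition \ref{def-2.5}) on the product $fg$, treating $g$ as the factor that is spectrally localized to low frequencies, and then to integrate the result against $h$. Writing
\[
fg=\dot{T}_{g}f+\dot{T}_{f}g+\dot{R}(f,g),
\]
the integral splits as $\int_0^t\!\int_{\RR^2}(\dot{T}_{g}f)h\intd x\intd\tau+\int_0^t\!\int_{\RR^2}\big[(\dot{T}_{f}g)+\dot{R}(f,g)\big]h\intd x\intd\tau$. My expectation is that the paraproduct $\dot{T}_{g}f$, in which $g$ enters only through the low-frequency cut-offs $\dot{S}_{k-1}g$, produces the second and third terms of the claimed bound, whereas $\dot{T}_{f}g$ and the remainder $\dot{R}(f,g)$, in which $g$ always appears as a single block $\dot{\Delta}_kg$, produce the first term after one use of Bernstein's lemma.

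For the main piece I would first exploit spectral localization: since $\dot{S}_{k-1}g\,\dot{\Delta}_kf$ is supported in an annulus of size $2^k$, only a fixed number of neighbouring blocks of $h$ survive the spatial pairing, so that
\[
\int_0^t\!\int_{\RR^2}(\dot{T}_{g}f)h\intd x\intd\tau=\sum_{k\in\ZZ}\int_0^t\!\int_{\RR^2}\dot{S}_{k-1}g\,\dot{\Delta}_kf\,\widetilde{\dot{\Delta}}_kh\intd x\intd\tau.
\]
I would then split the sum at the threshold $N$. On the range $|k|<N$ I bound $\dot{S}_{k-1}g$ in $L^2_tL^\infty$, $\dot{\Delta}_kf$ in $L^2_tL^2$ (by $\|f\|_{L^2_tL^2}$) and $\widetilde{\dot{\Delta}}_kh$ in $L^\infty_tL^2$ (by $\|h\|_{L^\infty_tL^2}$); since there are at most $2N$ indices and Hölder in time uses exponents $(2,2,\infty)$, this yields exactly $2N\sup_k\|\dot{S}_kg\|_{L^2_tL^\infty}\|f\|_{L^2_tL^2}\|h\|_{L^\infty_tL^2}$. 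On the range $|k|\ge N$ I instead keep the $\ell^2$ structure: pulling out $\sup_k\|\dot{S}_kg\|_{L^2_tL^\infty}$ and applying the Cauchy--Schwarz inequality in $k$ produces $\big(\sum_{|k|\ge N}\|\dot{\Delta}_kf\|^2_{L^2_tL^2}\big)^{1/2}$ together with $\big(\sum_k\|\widetilde{\dot{\Delta}}_kh\|^2_{L^\infty_tL^2}\big)^{1/2}\lesssim\|h\|_{\widetilde{L}^\infty_t\dot{B}^0_{2,2}}$, which is precisely the third term.

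For $\dot{T}_{f}g$ and $\dot{R}(f,g)$ the factor of $g$ is a single block, so Bernstein's lemma (Lemma \ref{bernstein}) gives $\|\dot{\Delta}_kg\|_{L^\infty}\lesssim 2^k\|\dot{\Delta}_kg\|_{L^2}$ with $2^k\|\dot{\Delta}_kg\|_{L^2}\sim\|\dot{\Delta}_k\nabla g\|_{L^2}$. Placing the low-frequency (or paired) factor of $f$ in $L^2$, this block of $g$ in $L^\infty$ and the block of $h$ in $L^2$, estimating in time by Hölder with exponents $(2,2,\infty)$, and then summing in $k$ by Cauchy--Schwarz, I obtain
\[
\Big|\int_0^t\!\int_{\RR^2}\big[(\dot{T}_{f}g)+\dot{R}(f,g)\big]h\intd x\intd\tau\Big|\lesssim\|f\|_{L^2_tL^2}\Big(\sum_{k\in\ZZ}2^{2k}\|\dot{\Delta}_kg\|^2_{L^2_tL^2}\Big)^{\frac12}\|h\|_{L^\infty_tL^2},
\]
and the middle factor is comparable to $\|\nabla g\|_{L^2_tL^2}$, giving the first term. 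Here one uses that $\dot{S}_{k-1}f$ and $\dot{\Delta}_kf$ are bounded in $L^2$ uniformly in $k$, while the $\ell^2$ sums over $f$ and over $\nabla g$ are each controlled by the corresponding $\dot{B}^0_{2,2}\simeq L^2$ norm.

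The step I expect to demand the most care is the splitting at $N$ in the main paraproduct. Because $\sup_k\|\dot{S}_kg\|_{L^2_tL^\infty}$ carries no decay in $k$, the middle frequencies $|k|<N$ cannot be summed in $\ell^2$ and must be counted crudely, which is the source of the linear loss $2N$; the genuine smallness is relegated to the high-frequency tail $\big(\sum_{|k|\ge N}\|\dot{\Delta}_kf\|^2_{L^2_tL^2}\big)^{1/2}$, which can be made small by taking $N$ large. Getting the spectral-support bookkeeping right—so that the pairing against $h$ collapses to $\widetilde{\dot{\Delta}}_kh$ and the $\ell^2$ orthogonality in $k$ becomes available—is the other point to handle with attention; the remaining manipulations are routine applications of Hölder's inequality in time and space together with Bernstein's lemma.
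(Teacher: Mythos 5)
Your overall strategy---Bony decomposition with $g$ in the low-frequency slot for the main term, a frequency split at $N$ there, and Bernstein's lemma on single blocks of $g$ elsewhere---is sound, and it is essentially the mirror image of the paper's argument: the paper decomposes $gh$ and tests against $\dot{\Delta}_qf$, while you decompose $fg$ and test against $h$. Your treatment of the paraproduct $\dot{T}_gf$ is correct and delivers the second and third terms of the lemma just as the paper does. However, there is a genuine flaw in your derivation of the first term, specifically for the piece $\dot{T}_fg$ paired with $h$. There the product $\dot{S}_{k-1}f\,\dot{\Delta}_kg$ has annular spectrum, so $h$ enters through the blocks $\widetilde{\dot{\Delta}}_kh$, and in your stated order of operations---H\"older in time with exponents $(2,2,\infty)$ first, Cauchy--Schwarz in $k$ second---the $h$-factor that comes out is
\[
\Big(\sum_{k\in\ZZ}\big\|\widetilde{\dot{\Delta}}_kh\big\|^2_{L^\infty_tL^2(\RR^2)}\Big)^{\frac12}\simeq\|h\|_{\widetilde{L}^\infty_t\dot{B}^0_{2,2}(\RR^2)},
\]
not $\|h\|_{L^\infty_tL^2(\RR^2)}$. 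These are not comparable in the direction you need: $\sum_k\sup_t\geq\sup_t\sum_k$, and the discrepancy can be arbitrarily large (take $h$ whose distinct frequency blocks peak at disjoint times). So the displayed estimate you claim for $\dot{T}_fg+\dot{R}(f,g)$ does not follow from the steps you describe; as written, you have only proved the lemma with $\|h\|_{\widetilde{L}^\infty_t\dot{B}^0_{2,2}}$ in the first term, which is strictly weaker than the statement.

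The fix is to reverse the order for this piece: perform the Cauchy--Schwarz summation over $k$ pointwise in time, which gives $C\|f(\tau)\|_{L^2(\RR^2)}\|\nabla g(\tau)\|_{L^2(\RR^2)}\|h(\tau)\|_{L^2(\RR^2)}$ at each fixed $\tau$, and only then apply H\"older in time; this is exactly how the paper handles its remainder term $\dot{R}(g,h)$. (Your remainder piece $\dot{R}(f,g)$ is harmless in either order, since there the output spectrum lies in a ball, so $h$ enters through the uniform-in-$k$ low-frequency cutoff rather than through individual blocks.) It is also worth noting that the paper's choice of decomposing $gh$ rather than $fg$ avoids the issue altogether: in its term $\dot{T}_hg$, where $g$ carries the derivative, $h$ sits in the low-frequency slot $\dot{S}_{k-1}h$, which is bounded by $\|h\|_{L^\infty_tL^2(\RR^2)}$ uniformly in $k$, so no $\ell^2$ summation over blocks of $h$ is ever required for the first term.
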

\begin{remark}
Note that for any $i\leq -1$, we have from the support property that
\[\dot{S}_if=S_0\dot{S}_if.\]
This implies that
$\|\dot{S}_if\|_{L^\infty}\leq C\|S_0f\|_{L^\infty} $ for $i\leq-1$, hence we have
\[\sup_{i\in\ZZ}\|\dot{S}_if\|_{L^\infty}\leq C\sup_{i\geq0}\|S_if\|_{L^\infty}.\]
So, we often use $\sup_{i\geq0}\|S_if\|_{L^\infty}$ instead of $\sup_{i\in\ZZ}\|\dot{S}_if\|_{L^\infty}$ in the following parts.
\end{remark}
\begin{proof}[Proof of Lemma \ref{Lem-tri}]
According to the Bony para-product decomposition, one writes
\begin{equation*}
\begin{split}
\int_0^t\int_{\mathbb{R}^2}fgh\intd x\mathrm{d}\tau=&\sum_{q\in\mathbb{Z}}\int_0^t\int_{\mathbb{R}^2}\dot{\Delta}_qf\big(\dot{T}_gh\big)\intd x\mathrm{d}\tau+\sum_{q\in\mathbb{Z}}\int_0^t\int_{\mathbb{R}^2}\dot{\Delta}_qf\big(\dot{T}_hg\big)\intd x\mathrm{d}\tau\\
&+\sum_{q\in\mathbb{Z}}\int_0^t\int_{\mathbb{R}^2}\dot{\Delta}_qf\big(\dot{R}(g,h)\big)\intd x\mathrm{d}\tau.
\end{split}
\end{equation*}
For the first term in the right side of the above equality, by the H\"older inequality and the support property of paraproduct, we have
\begin{equation*}
\begin{split}
\sum_{q\in\mathbb{Z}}\int_0^t\int_{\mathbb{R}^2}\dot{\Delta}_qf(\dot{T}_gh)\intd x\mathrm{d}\tau=&\sum_{q\in\mathbb{Z}}\sum_{|k-q|\leq5}\int_0^t\int_{\mathbb{R}^2}\dot{\Delta}_qf(\dot{S}_{k-1}g\dot{\Delta}_kh)\intd x\mathrm{d}\tau\\
=&\sum_{q\in\mathbb{Z}}\sum_{|k-q|\leq5}\int_0^t\int_{\mathbb{R}^2}\dot{\Delta}_qf(\dot{S}_{k-1}g^{H}_{N}\dot{\Delta}_kh)\intd x\mathrm{d}\tau\\
&+\sum_{q\in\mathbb{Z}}\sum_{|k-q|\leq5}\int_0^t\int_{\mathbb{R}^2}\dot{\Delta}_qf(\dot{S}_{k-1}g_{N}^{L}\dot{\Delta}_kh)\intd x\mathrm{d}\tau.
\end{split}
\end{equation*}
where $g^{L}_{N}=\dot{S}_{N}g$ and $g^{H}_{N}=(I_{\mathrm{d}}-\dot{S}_N)g.$

For the para-product term, by the property of support and the H\"older inequality, we see that
\begin{align*}
&\sum_{q}\sum_{|k-q|\leq5}\int_0^t\int_{\mathbb{R}^2}\dot{\Delta}_qf\big(\dot{S}_{k-1}g^{H}_{N}\dot{\Delta}_kh\big)\intd x\mathrm{d}\tau\\
=&\sum_{q\geq N-5}\sum_{|k-q|\leq5}\int_0^t\int_{\mathbb{R}^2}\dot{\Delta}_qf\big(\dot{S}_{k-1}g^{H}_{N}\dot{\Delta}_kh\big)\intd x\mathrm{d}\tau\\
\leq&\sum_{q\geq N-5}\sum_{|k-q|\leq5}\int_0^t\|\dot{\Delta}_qf(\tau)\|_{L^2(\RR^2)}\|\dot{S}_{k-1}g(\tau)\|_{L^\infty(\RR^2)}\|\dot{\Delta}_kh(\tau)\|_{L^2(\RR^2)}\intd\tau\\
\leq&\sup_{k\in\ZZ}\|\dot{S}_{k-1}g\|_{L^2_tL^\infty(\RR^2)} \sum_{q\geq N-5}\|\dot{\Delta}_qf\|_{L^2_tL^2(\RR^2)}\sum_{|k-q|\leq5}\|\dot{\Delta}_kh\|_{L^\infty_tL^2(\RR^2)} \\
\leq&\sup_{k\in\ZZ}\|\dot{S}_{k}g\|_{L^2_tL^\infty(\RR^2)}\|h\|_{\widetilde{L}^\infty_t\dot{B}^0_{2,2}(\RR^2)}\bigg(\sum_{q\geq N-5}\|\dot{\Delta}_qf\|^2_{L^2_tL^2(\RR^2)}\bigg)^{\frac12}.
\end{align*}
On the other hand, we find that
\begin{align*}
 \int_0^t\int_{\mathbb{R}^2}\dot{\Delta}_qf(\dot{S}_{k-1}g_{N}^{L}\dot{\Delta}_kh)\intd x\mathrm{d}\tau
=&\int_0^t\int_{\mathbb{R}^2}\dot{\Delta}_qf_N^H(\dot{S}_{k-1}g_{N}^{L}\dot{\Delta}_kh)\intd x\mathrm{d}\tau\\
&+\int_0^t\int_{\mathbb{R}^2}\dot{\Delta}_qf_N^M(\dot{S}_{k-1}g_{N}^{L}\dot{\Delta}_kh)\intd x\mathrm{d}\tau\\
&+\int_0^t\int_{\mathbb{R}^2}\dot{\Delta}_qf_{-N}^L(\dot{S}_{k-1}g_{N}^{L}\dot{\Delta}_kh)\intd x\mathrm{d}\tau\\
:=&I+II+III,
\end{align*}
where ${f_N^M=\sum_{q=-N}^N\dot{\Delta}_qf.}$

In a similar way as above,  we can obtain
\begin{equation*}
\sum_{q\in\mathbb{Z}}\sum_{|k-q|\leq5}I\leq \sup_{k\in\mathbb{Z}}\|\dot{S}_{k}g\|_{L^2\big([0,t];\,L^\infty(\RR^2)\big)}\big\|f^{L}_{-N}\big\|_{L^2\big([0,t];\,L^2(\RR^2)\big)}
\|h\|_{\widetilde{L}^\infty\big([0,t];\,\dot{B}^0_{2,2}(\RR^2)\big)}
\end{equation*}
and
\begin{equation*}
\sum_{q\in\mathbb{Z}}\sum_{|k-q|\leq5}III\leq \sup_{k\in\mathbb{Z}}\|\dot{S}_{k-1}g\|_{L^2_tL^\infty(\RR^2)}\|f^{H}_{N}\|_{L^2_tL^2(\RR^2)}\|h\|_{L^\infty_t\dot{B}^0_{2,2}(\RR^2)}.
\end{equation*}
Now we need to tackle with the term involving the middle frequency of $f$. By the discrete Young inequality, we readily have
\begin{align*}
&\sum_{-N<q\leq N}\sum_{|k-q|\leq5}\int_0^t\int_{\mathbb{R}^2}\dot{\Delta}_qf\big(\dot{S}_{k-1}g_{N}^{L}\dot{\Delta}_kh\big)\intd x\mathrm{d}\tau\\
\leq&\sum_{-N<q\leq N}\sum_{|k-q|\leq5}\int_0^t\|\dot{\Delta}_qf(\tau)\|_{L^2(\RR^2)}\|\dot{S}_{k-1}g(\tau)\|_{L^\infty(\RR^2)}\|\dot{\Delta}_kh(\tau)\|_{L^2(\RR^2)}\intd\tau\\
\leq&2N\sup_{k\in\ZZ}\|\dot{S}_{k-1}g\|_{L^2_tL^\infty(\RR^2)}\sup_{q\in\ZZ}\Big(\|\dot{\Delta}_qf\|_{L^2_tL^2(\RR^2)} \sum_{|k-q|\leq5}\|\dot{\Delta}_kh\|_{L^\infty_tL^2(\RR^2)}\Big)\\
\leq&2N\sup_{k\in\ZZ}\|\dot{S}_{k-1}g\|_{L^2_tL^\infty(\RR^2)}\sup_{q\in\ZZ}\|\dot{\Delta}_qf\|_{L^2_tL^2(\RR^2)}\sup_{q}\|\dot{\Delta}_qh\|_{L^\infty_tL^2(\RR^2)}\\
\leq&2N\sup_{k\in\ZZ}\|\dot{S}_{k}g\|_{L^2_tL^\infty(\RR^2)}\|f\|_{L^2_tL^2(\RR^2)}\|h\|_{L^\infty_tL^2(\RR^2)}.
\end{align*}
By the H\"older inequality, the second term can be bounded as follows:
\begin{align*}
&\sum_{q\in\mathbb{Z}}\int_0^t\int_{\mathbb{R}^2}\dot{\Delta}_qf\big(\dot{T}_hg\big)\intd x \mathrm{d}\tau\\
=&\sum_{q\in\mathbb{Z}}\sum_{|k-q|\leq5}\int_0^t\int_{\mathbb{R}^2}\dot{\Delta}_qf\big(\dot{S}_{k-1}h\dot{\Delta}_kg\big)\intd x\mathrm{d}\tau\\
\leq&\sum_{q\in\mathbb{Z}}\sum_{|k-q|\leq5}\int_0^t\|\dot{\Delta}_qf(\tau)\|_{L^2(\RR^2)}\|\dot{S}_{k-1}h(\tau)\|_{L^2(\RR^2)}\|\dot{\Delta}_kg(\tau)\|_{L^\infty(\RR^2)}\intd\tau\\
\leq&\|h\|_{L^\infty_tL^2(\RR^2)}\sum_{q\in\ZZ}\|\dot{\Delta}_qf\|_{L^2_tL^2(\RR^2)} \sum_{|k-q|\leq5}2^{k}\|\dot{\Delta}_kg\|_{L^2_tL^2(\RR^2)} \\
\leq&\|f\|_{L^2_tL^2(\RR^2)}\|\nabla g\|_{L^2_tL^2(\RR^2)}\|h\|_{L^\infty_tL^2(\RR^2)}.
\end{align*}
As for the remainder term, by the support property of the remainder term and the H\"older inequality, we get
\begin{equation}\label{eq.R3}
\begin{split}
\sum_{q\in\mathbb{Z}}\int_0^t\int_{\mathbb{R}^2}\dot{\Delta}_qf\big(\dot{R}(g,h)\big)\intd x\mathrm{d}\tau=&\sum_{q\in\mathbb{Z}}\sum_{k\geq q-5}\int_0^t\int_{\mathbb{R}^2}\dot{\Delta}_qf\big(\dot{\Delta}_kg\widetilde{\dot\Delta}_kh\big)\intd x\mathrm{d}\tau\\
\leq&\sum_{q\in\mathbb{Z}}\sum_{k\geq q-5}\int_0^t\|\dot{\Delta}_qf\|_{L^2(\RR^2)}\big\|\widetilde{\dot{\Delta}}_q(\dot{\Delta}_kg\widetilde{\dot{\Delta}}_kh)\big\|_{L^2(\RR^2)}\,\mathrm{d}\tau\\
\leq&\sum_{q\in\mathbb{Z}}\sum_{k\geq q-5}2^{q}\int_0^t\|\dot{\Delta}_qf\|_{L^2(\RR^2)}\big\|\dot{\Delta}_kg\widetilde{\dot{\Delta}}_kh\big\|_{L^1(\RR^2)}\,\mathrm{d}\tau.
\end{split}
\end{equation}
Furthermore, by the H\"older inequality, we obtain
\begin{align*}
&\int_0^t\|\dot{\Delta}_qf\|_{L^2(\RR^2)}\big\|\dot{\Delta}_kg\widetilde{\dot{\Delta}}_kh\big\|_{L^1(\RR^2)}\mathrm{d}\tau\\
\leq&\int_0^t\|\dot{\Delta}_qf(\tau)\|_{L^2(\RR^2)}\|\dot{\Delta}_kg(\tau)\|_{L^2(\RR^2)}\big\|\widetilde{\dot{\Delta}}_kh(\tau)\big\|_{L^2(\RR^2)}\,\mathrm{d}\tau.
\end{align*}
Inserting this estimate into \eqref{eq.R3}, we get from the discrete Young inequality and the H\"older inequality that
\begin{equation*}
\begin{split}
&\sum_{q\in\mathbb{Z}}\int_0^t\int_{\mathbb{R}^2}\dot{\Delta}_qf\big(\dot{R}(g,h)\big)\intd x\mathrm{d}\tau\\
\leq&\int_0^t\sum_{q\in\mathbb{Z}}\sum_{k\geq q-5}2^{q}\|\dot{\Delta}_qf(\tau)\|_{L^2(\RR^2)}\|\dot{\Delta}_kg(\tau)\|_{L^2(\RR^2)}\big\|\widetilde{\dot{\Delta}}_kh(\tau)\big\|_{L^2(\RR^2)}\,\mathrm{d}\tau\\
\leq&C\int_0^t\|f(\tau)\|_{L^2(\RR^2)}\|\nabla g(\tau)\|_{L^2(\RR^2)}\|h(\tau)\|_{L^2(\RR^2)}\intd\tau\\
\leq&C\|f\|_{L^2_tL^2(\RR^2)}\|\nabla g\|_{L^2_tL^2(\RR^2)}\|h\|_{L^\infty_tL^2(\RR^2)}.
\end{split}
\end{equation*}
Collecting all these estimates yields the desired result.
\end{proof}

\section{A priori estimates}\label{Priori}
\setcounter{section}{3}\setcounter{equation}{0}
This section is devoted to show some useful \emph{a priori} estimates for the smooth solution of problem~\eqref{eq.MNS} which can be viewed as an preparation for proving our theorems. Let us begin by proving the $L^2$-energy estimate of solution $(u,B,E)$.
 \begin{proposition}
 Let $(u_0,E_0,B_0)\in (L^2(\RR^2))^3$, and $(u,B,E)$ be a smooth solution of problem \eqref{eq.MNS}. Then we have that for all $t\geq0,$
 \begin{equation}\label{eq.L2estimate}
 \begin{split}
 &\big\|\big(u,E,B\big)(t)\big\|^2_{L^2(\RR^2)}+2\int_0^t\|\nabla u(\tau)\|^2_{L^2(\RR^2)}\intd\tau+2\int_0^t\|j(\tau)\|^2_{L^2(\RR^2)}\intd\tau\\
=&\|u_0\|^2_{L^2(\RR^2)}+ \|E_0\|^2_{L^2(\RR^2)}+ \|B_0\|^2_{L^2(\RR^2)}.
 \end{split}
 \end{equation}
 where $\big\|\big(u,E,B\big)(t)\big\|^2_{L^2(\RR^2)}=\|u(t)\|^2_{L^2(\RR^2)}+ \|E(t)\|^2_{L^2(\RR^2)}+ \|B(t)\|^2_{L^2(\RR^2)}$.
 \end{proposition}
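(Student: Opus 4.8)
The plan is to run the classical energy method: test each evolution equation in \eqref{eq.MNS} against its own unknown in $L^2(\RR^2)$ and sum the resulting identities. Since we are dealing with a smooth solution, every integration by parts below is legitimate and there is no regularity issue to worry about; the entire content of the proof is algebraic cancellation.

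First I would take the $L^2(\RR^2)$ inner product of the momentum equation with $u$. The convection term vanishes since $\int_{\RR^2}(u\cdot\nabla)u\cdot u\intd x=\tfrac12\int_{\RR^2}(u\cdot\nabla)|u|^2\intd x=0$ by $\Div u=0$, and the pressure term $\int_{\RR^2}\nabla\pi\cdot u\intd x=0$ for the same reason. Integrating the viscous term by parts (with $\nu=1$) gives $-\int_{\RR^2}\Delta u\cdot u\intd x=\|\nabla u\|_{L^2(\RR^2)}^2$, so that
\[\frac12\dtd\|u\|_{L^2(\RR^2)}^2+\|\nabla u\|_{L^2(\RR^2)}^2=\int_{\RR^2}(j\times B)\cdot u\intd x.\]

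Next I would treat the Maxwell pair. Taking the inner product of the $E$-equation with $E$ and of the $B$-equation with $B$ and adding them, the two curl contributions cancel because integration by parts gives $\int_{\RR^2}\mathrm{curl}\,E\cdot B\intd x=\int_{\RR^2}E\cdot\mathrm{curl}\,B\intd x$, whence
\[\frac12\dtd\big(\|E\|_{L^2(\RR^2)}^2+\|B\|_{L^2(\RR^2)}^2\big)=-\int_{\RR^2}j\cdot E\intd x.\]
The key algebraic step is to insert Ohm's law written as $E=j-u\times B$ on the right-hand side, which produces the current dissipation together with a coupling term:
\[-\int_{\RR^2}j\cdot E\intd x=-\|j\|_{L^2(\RR^2)}^2+\int_{\RR^2}j\cdot(u\times B)\intd x.\]

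Finally I would add the two identities. The two remaining coupling integrals are exactly $\int_{\RR^2}(j\times B)\cdot u\intd x+\int_{\RR^2}j\cdot(u\times B)\intd x$, which vanishes thanks to the scalar triple-product relation $(j\times B)\cdot u=-j\cdot(u\times B)$, that is, the vanishing condition \eqref{eq.dis}. This leaves the pointwise-in-time balance
\[\frac12\dtd\big\|(u,E,B)\big\|_{L^2(\RR^2)}^2+\|\nabla u\|_{L^2(\RR^2)}^2+\|j\|_{L^2(\RR^2)}^2=0,\]
and integrating in time over $[0,t]$ and multiplying by $2$ yields \eqref{eq.L2estimate}. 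There is no genuine analytic obstacle here for smooth solutions; the only point demanding care is ensuring that the Lorentz-force work term and the Joule-type term cancel \emph{exactly}, which is precisely what \eqref{eq.dis} guarantees, together with the bookkeeping that Ohm's law is what converts $\int_{\RR^2}j\cdot E\intd x$ into the nonnegative dissipation $\|j\|_{L^2(\RR^2)}^2$ plus the cancelling coupling term.
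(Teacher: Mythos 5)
Your proposal is correct and follows essentially the same route as the paper: the standard $L^2$ energy method, cancellation of the convection, pressure, and curl terms, and the substitution $E=j-u\times B$ from Ohm's law so that the Lorentz-force work term and $\int_{\RR^2}j\cdot(u\times B)\intd x$ cancel via the triple-product identity \eqref{eq.dis}, leaving the dissipation $\|j\|_{L^2(\RR^2)}^2$. No gaps; your write-up is, if anything, slightly cleaner in keeping the identities pointwise in time before integrating.
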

 \begin{proof}
 The proof of the theorem is standard, we also give the proof for completeness.
 Taking the $L^2$-inner product  of $(u,E,B)$, we immediately have
 \begin{equation*}
 \frac12\dtd\|u(t)\|_{L^2(\RR^2)}^2+\int_0^t\|\nabla u(\tau)\|_{L^2(\RR^2)}^2\intd\tau=\int_{\RR^2}(j\times B)\cdot u\intd x,
 \end{equation*}
 \begin{equation*}
  \frac12\dtd\|E(t)\|_{L^2(\RR^2)}^2=\int_{\RR^2}\mathrm{curl}\,B\cdot E\intd x-\int_{\RR^2}j\cdot E\intd x,
 \end{equation*}
 and
 \begin{equation*}
  \frac12\dtd\|B(t)\|_{L^2(\RR^2)}^2=-\int_{\RR^2}\mathrm{curl}\,E\cdot B\intd x.
 \end{equation*}
 Note that
 \begin{equation*}
 \int_{\RR^2}\mathrm{curl}\,B\cdot E\intd x-\int_{\RR^2}\mathrm{curl}\,E\cdot B\intd x=0
 \end{equation*}
 and from the relation $j=E+u\times B$ that
  \begin{equation*}
  \begin{split}
 &\int_{\RR^2}(j\times B)\cdot u\intd x-\int_{\RR^2}j\cdot E\intd x\\
 =&-\int_{\RR^2}|j|^2\intd x+\int_{\RR^2}j\cdot (u\times B)\intd x+\int_{\RR^2}(j\times B)\cdot u\intd x\\
  =&-\int_{\RR^2}|j|^2\intd x.
 \end{split}
 \end{equation*}
 Collecting all these estimates, we readily have
 \begin{equation*}
 \begin{split}
 &\frac12\dtd\left(\|u(t)\|_{L^2(\RR^2)}^2+\|E(t)\|_{L^2(\RR^2)}^2+\|B(t)\|_{L^2(\RR^2)}^2\right)\\
 &+\int_0^t\|\nabla u(\tau)\|_{L^2(\RR^2)}^2\intd\tau +\int_0^t\|j(\tau)\|_{L^2(\RR^2)}^2\intd\tau=0.
 \end{split}
 \end{equation*}
 Integrating the above equality with respect to time $t$ yields the desired result \eqref{eq.L2estimate}.
 \end{proof}
 \begin{proposition}\label{prop-improve}
 Let $(u,B,E)$ be the smooth solution of problem \eqref{eq.MNS}. Then there exist a constant $C=C(t,\|(u_0,B_0,E_0)\|_{L^2(\RR^2)})>0$ such that
 \begin{equation*}
 \begin{split}
 &\|u\|_{{L}^\infty_tL^2(\RR^2)}^2+\big\|(E,B)\big\|^2_{\widetilde{L}^\infty_t\dot{B}^0_{2,2}(\RR^2)}+\int_0^t\|\nabla u(\tau)\|_{L^2(\RR^2)}^2\intd\tau\\
 &+\lambda_1\sup_{i\in\ZZ}2^{2i}\int_0^t\sum_{k\in\mathbb{Z}^2}\big\|\sqrt{\phi_{i,k}}u(\tau)\big\|^2_{L^2(\RR^2)}\intd\tau+\int_0^t\|j(\tau)\|_{L^2(\RR^2)}^2\intd\tau\leq C.
 \end{split}
 \end{equation*}
 \end{proposition}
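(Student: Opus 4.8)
The plan is to split the five quantities on the left into those already supplied by the basic energy balance and the two genuinely new ones, and then to couple the latter. First I would note that the $L^2$-energy identity \eqref{eq.L2estimate} at once bounds $\|u\|_{L^\infty_tL^2}^2$, $\int_0^t\|\nabla u\|_{L^2}^2\intd\tau$, $\int_0^t\|j\|_{L^2}^2\intd\tau$ and $\|(E,B)\|_{L^\infty_tL^2}^2$ by $\|(u_0,E_0,B_0)\|_{L^2}^2$. Hence only the localized velocity term
\[
Y(t):=\lambda_1\sup_{i\in\ZZ}2^{2i}\int_0^t\sum_{k\in\ZZ^2}\big\|\sqrt{\phi_{i,k}}u(\tau)\big\|_{L^2(\RR^2)}^2\intd\tau
\]
and the Chemin--Lerner bound $\|(E,B)\|_{\widetilde L^\infty_t\dot B^0_{2,2}}^2$ remain, and these are linked only through the Lorentz force $j\times B$ and the current $j=E+u\times B$. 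The observation that makes the scheme run is that, by Lemma \ref{local-Bernstein} together with the comparison \eqref{eq.MlMu} and the remark following Lemma \ref{Lem-tri}, the quantity $Y(t)$ dominates the low-frequency sup-norm of the velocity, namely $\sup_{k\in\ZZ}\|\dot S_ku\|_{L^2_tL^\infty}^2\le CY(t)$; this is exactly the factor occurring in the trilinear estimate.

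For the term $Y(t)$ I would repeat the eigenfunction-localized computation of Proposition \ref{Prop-heat-im}, now for the momentum equation in \eqref{eq.MNS}: testing against $\varphi_{i,k}u$ and invoking Corollary \ref{coro-test-prop} for the dissipative term, then summing over $k$ and integrating in time, reproduces the localized identity with the same boundary contribution, which the trace argument \eqref{eq.trace1}--\eqref{eq.boundary-final} bounds by $C\int_0^t\|u\|_{H^1}^2\intd\tau$, hence by the energy. The price of passing to the full system is three new terms: the convection $\int(u\cdot\nabla)u\cdot\varphi_{i,k}u\intd x$, the pressure $\int\nabla\pi\cdot\varphi_{i,k}u\intd x$ and the Lorentz force $\int(j\times B)\cdot\varphi_{i,k}u\intd x$. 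The first two I would integrate by parts, using $\Div u=0$ to throw the derivative onto the cut-off, and then control them by the localized gradient $\int_0^t\sum_k\|\sqrt{\varphi_{i,k}}\nabla u\|_{L^2}^2\intd\tau$ and $\|u\|_{L^\infty_tL^2}$ through Young's inequality; the divergence-free structure is what keeps them from spoiling the $2^{2i}$ weight. After summing the bounded weight $\sum_k\varphi_{i,k}$, the Lorentz term becomes a trilinear expression of the type treated below.

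For the electromagnetic field I would localize the last two equations of \eqref{eq.MNS} in frequency, apply $\dot\Delta_q$ and take the $L^2$ inner product: the two curl terms cancel and $j=E+u\times B$ furnishes the damping $\|\dot\Delta_qE\|_{L^2}^2$, leaving the source $-\int\dot\Delta_q(u\times B)\cdot\dot\Delta_qE\intd x$. Taking the time supremum frequency by frequency and summing the squares over $q$ gives
\[
\|(E,B)\|_{\widetilde L^\infty_t\dot B^0_{2,2}}^2\le\|(E_0,B_0)\|_{\dot B^0_{2,2}}^2+2\sum_{q}\int_0^t\big|\langle\dot\Delta_q(u\times B),\dot\Delta_qE\rangle\big|\intd\tau .
\]
Both this source and the Lorentz term from the velocity estimate are trilinear expressions with one velocity factor and two electromagnetic factors, so I would estimate them by Lemma \ref{Lem-tri}: assign the velocity the role of $g$ (so that the $L^\infty$ norm falls only on the regular pieces $\dot S_ku$), assign to $f$ the field whose high-frequency $L^2_tL^2$-tail is summable, namely $j$ (respectively $E$), both bounded in $L^2_tL^2$ by the energy, and assign to $h$ the field $B$, whose $\widetilde L^\infty_t\dot B^0_{2,2}$-norm then reappears on the right.

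To close, write $X(t)$ for the entire left-hand side of the proposition. Lemma \ref{Lem-tri} produces three kinds of contributions: pure energy terms; a term $2N\sup_k\|\dot S_ku\|_{L^2_tL^\infty}\times(\text{energy})$; and a term $\sup_k\|\dot S_ku\|_{L^2_tL^\infty}\,\|B\|_{\widetilde L^\infty_t\dot B^0_{2,2}}\big(\sum_{|k|\ge N}\|\dot\Delta_kf\|_{L^2_tL^2}^2\big)^{1/2}$. Using $\sup_k\|\dot S_ku\|_{L^2_tL^\infty}^2\le CX(t)$ and Young's inequality, the last contribution is at most $\tfrac14X(t)$ plus a multiple of the squared high-frequency tail times $X(t)$, and since $f\in\{j,E\}$ lies in $L^2_tL^2$ I would take the threshold $N$ so large that this tail is small enough to be absorbed into the left. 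The remaining middle term is then of size $C_N\sqrt{X(t)}$, and one is left with a closed inequality of the schematic form $X(t)\le C_0(t)+C_N\sqrt{X(t)}+\tfrac12X(t)$, from which a continuation argument—legitimate because $X$ is finite and continuous for the smooth solution—yields the asserted bound. The hardest point, and the heart of the paper, is precisely this balancing: the high-frequency coupling can be removed only by sending $N\to\infty$, but that inflates the factor $2N$ in the middle term, so the threshold must be tuned against the decay rate of the frequency tails of $j$ and $E$. It is here that the slender extra (logarithmic) integrability carried by the borderline space becomes essential, and this is where I expect to spend the most effort making the argument quantitative.
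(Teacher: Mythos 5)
Your proposal reproduces the paper's overall skeleton (eigenfunction-localized energy estimate for $u$, frequency-localized energy balance for $(E,B)$, the trilinear Lemma \ref{Lem-tri} with the velocity in the $\dot S_k$-slot, high/low frequency splitting in $N$, and absorption), but it has a genuine gap at the pressure term. After integrating by parts you are left with $\sum_{k}\int_{\RR^2}\pi\,(u\cdot\nabla\varphi_{i,k})\intd x$, and $\pi$ is \emph{nonlocal}: the part $\pi_2=\frac{\Div}{-\Delta}(j\times B)$ at the cell $k$ receives contributions from every cell $\widetilde k$, so "throwing the derivative onto the cut-off" does not reduce anything to localized gradients. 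Worse, $j\times B$ is a priori only in $L^1$ (a product of two $L^2$ functions), so the naive bound $C2^{i}\|\pi_2\|_{L^2}\|u\|_{L^2}$ is unavailable: $\big\|\tfrac{\Div}{-\Delta}(j\times B)\big\|_{L^2}$ is simply not controlled by the energy. This is precisely why the paper needs Lemma \ref{Lemma-p-II} (and Lemma \ref{Lemma-p-I}): a near-field part $|\widetilde k-k|\le 5$ handled through the two-dimensional embedding $L^1\hookrightarrow\dot B^{-1}_{2,\infty}$ and a paraproduct duality against $\dot B^{0}_{2,1}$, and a far-field part $|\widetilde k-k|>5$ handled through the pointwise kernel decay $|K|\le c|x|^{-2}$, $|G|\le c|x|^{-3}$ and the discrete Young inequality, yielding $C\|j\|^2_{L^2}\|B\|^2_{L^2}$ plus $\varepsilon$-absorbable terms. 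Your proposal contains no substitute for this analysis, and it is the technical heart of the velocity estimate.

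The second gap is your choice of damping/tail field on the Maxwell side. Rewriting $-\int\dot\Delta_qj\cdot\dot\Delta_qE\intd x$ with damping on $\|\dot\Delta_qE\|^2_{L^2}$ and source $-\int\dot\Delta_q(u\times B)\cdot\dot\Delta_qE\intd x$ is algebraically legitimate, but your assertion that $j$ and $E$ are "both bounded in $L^2_tL^2$ by the energy" is false for $E$: the identity \eqref{eq.L2estimate} controls $\int_0^t\|j\|^2_{L^2}\intd\tau$ but only $\sup_t\|E\|_{L^2}$. The absorption step requires the tail $\big(\sum_{|q|\ge N}\|\dot\Delta_qf\|^2_{L^2_tL^2}\big)^{1/2}$ to be made small by choosing $N$ \emph{before} the bootstrap, and this is possible only for $f=j$, since $\sum_{q}\|\dot\Delta_qj\|^2_{L^2_tL^2}\le C\int_0^\infty\|j\|^2_{L^2}\intd t\le C$ by the energy identity; this is exactly how the paper fixes $N$ so that $\widetilde C\big(\sum_{|q|\ge N}\|\dot\Delta_qj\|^2_{L^2_tL^2}\big)^{1/2}\le\lambda_1/8$ in \eqref{eq.est-FV-uEB}. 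With $f=E$ the tail is controlled only by the damping term you are in the middle of estimating, so your closing inequality degenerates to $X\le C_0(t)+C_N\sqrt{X}+CX^{3/2}$, which does not close for large data or large times. For the same reason the paper's Lemma \ref{Lem-F-JXB} keeps $j$ in both the damping and the source (the $E$-damping variant is used only later, in Proposition \ref{prop-strong}, after the localized bound on $u$ is already available). Relatedly, your concluding remark that the logarithmic refinement of the data space is what allows the tuning of $N$ is off target for this proposition: Proposition \ref{prop-improve} uses only $L^2$ data, and $N$ is tuned solely against the energy bound on $j$; the $L^2_{\rm log}$ structure plays no role until Proposition \ref{prop-strong}.
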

 \begin{proof}
Multiplying the first equations of system \eqref{eq.MNS} by the cut-off function $\varphi_{i,k}u$, we have that
\begin{equation*}
\frac12\partial_t\left(\varphi_{i,k} u^2\right)+\frac12(u\cdot\nabla)\left(\varphi_{i,k} u^2\right)-\varphi_{i,k}u\Delta u
= \varphi_{i,k}u(j\times B)+(u\cdot\nabla\varphi_{i,k})u^{2}-\varphi_{i,k}u \cdot\nabla\pi.
\end{equation*}
Integrating the above equality with respect to space variable  $x$ over $\RR^2$ yields
\begin{equation*}
\begin{split}
&\frac12\dtd\big\|\sqrt{\varphi_{i,k}} u(t)\big\|_{L^2(\RR^2)}^2+\big\|\sqrt{\varphi_{i,k}}\nabla u(t)\big\|^2_{L^2(\RR^2)}+ 2\lambda_12^{2i} \big\|\sqrt{\varphi_{i,k}}u(t)\big\|^2_{L^2(\RR^2)}\\
=&\int_{\mathbb{R}^2}\varphi_{i,k}u\cdot(j\times B)\intd x+\frac12\int_{\mathbb{R}^2}(u\cdot\nabla\varphi_{i,k})|u|^2\intd x
 -\int_{\mathbb{R}^2}\varphi_{i,k}u\cdot\nabla\pi \intd x\\
 &-\frac12\int_{\partial B_{2^{-i}}(k)}|u|^{2}\nabla\varphi_{i,k}\cdot n\intd S.
\end{split}
\end{equation*}
Next, summing the above equality over $k\in\ZZ^2$ and integrating the resulting inequality in time $t$ provides
\begin{equation*}
\begin{split}
&\frac12\sum_{k\in\ZZ^{2}}\big\|\sqrt{\varphi_{i,k}} u(t)\big\|_{L^2(\RR^2)}^2+\int_0^t\sum_{k\in\ZZ^2}\big\|\sqrt{\varphi_{i,k}}\nabla u(\tau)\big\|^2_{L^2(\RR^2)}\intd\tau\\
&+2\lambda_12^{2i}\int_0^t\sum_{k\in\ZZ^2}\big\|\sqrt{\varphi_{i,k}}u(\tau)\big\|^2_{L^2(\RR^2)}\intd\tau\\
=&\frac12\sum_{k\in\ZZ^{2}}\big\|\sqrt{\varphi_{i,k}} u_0\big\|_{L^2(\RR^2)}^2+\int_0^t\sum_{k\in\ZZ^2}\int_{\mathbb{R}^2}\varphi_{i,k}u(j\times B)\intd x\mathrm{d}\tau+\int_0^t\sum_{k\in\ZZ^{2}}\int_{\mathbb{R}^2}(u\cdot\nabla\varphi_{i,k})u^2\intd x\mathrm{d}\tau\\
&-\int_0^t\sum_{k\in\ZZ^{2}}\int_{\mathbb{R}^2}\varphi_{i,k}u\cdot\nabla\pi \intd x\mathrm{d}\tau-\frac12\int_0^t\sum_{k\in\ZZ^2}\int_{\partial B_{2^{-i}}(k)}|u|^{2}\nabla\varphi_{i,k}\cdot n\intd S\intd\tau.
\end{split}
\end{equation*}
By the H\"older inequality and \eqref{eq.MlMu}, we easily find that
\begin{equation}\label{eq.j-b}
\begin{split}
\sum_{k\in\ZZ^2}\int_{\mathbb{R}^2}\varphi_{i,k}u\cdot(j\times B)\intd x
=&-\sum_{k\in\ZZ^2}\int_{\mathbb{R}^2}\varphi_{i,k}j\cdot( u\times B)\intd x\\
\leq&C \int_{\mathbb{R}^2}|u(x)||j(x)||B(x)| \intd x.
\end{split}
\end{equation}

By Lemma \ref{Lem-tri}, we see that
\begin{equation}\label{eq.est-jXb}
\begin{split}
&\sum_{k\in\mathbb{Z}^{2}}\int_0^t\int_{\mathbb{R}^2}\varphi_{i,k}u\cdot(j\times B)\intd x\mathrm{d}\tau\\
\leq&C \int_0^t\int_{\mathbb{R}^2}|u(t,x)||j(t,x)||B(t,x)| \intd x\mathrm{d}\tau\\
\leq&C\|j\|_{L^2_tL^2(\RR^2)}\big\|\nabla|u|\big\|_{L^2_tL^2(\RR^2)}\|B\|_{L^\infty_tL^2(\RR^2)}\\
&+CN\sup_{i'\in\ZZ}\big\|S_{i'-1}|u|\big\|_{L^2_tL^\infty(\RR^2)}\|j\|_{L^2_tL^2(\RR^2)}\|B\|_{L^\infty_tL^2(\RR^2)}\\
&+C\sup_{i'\in\ZZ}\big\|S_{i'-1}|u|\big\|_{L^2_tL^\infty(\RR^2)}\|B\|_{L^\infty_t\dot{B}^0_{2,2}(\RR^2)}\Big(\sum_{|i|\geq N}\big\|\dot{\Delta}_ij\big\|^{2}_{L^2_tL^2(\RR^2)}\Big)^{\frac12}.
\end{split}
\end{equation}
For the second term, the H\"older inequality and the interpolation inequality allow us to conclude that
\begin{equation*}
\begin{split}
\sum_{k\in\ZZ^{2}}\int_{\mathbb{R}^2}(u\cdot\nabla\varphi_{i,k})|u|^2\intd x=&-\sum_{k\in\ZZ^{2}}\int_{\mathbb{R}^2}\varphi_{i,k}(u\cdot\nabla)u\cdot u\intd x\\
\leq&C\int_{\mathbb{R}^2}|u(x)|^2|\nabla u(x)|  \intd x\\
\leq&C\|u\|^2_{L^4(\RR^2)}\|\nabla u\|_{L^2(\RR^2)}\\
\leq&C\|u\|_{L^2(\RR^2)}\|\nabla u\|^2_{L^2(\RR^2)}.
\end{split}
\end{equation*}
From this, it follows that
\begin{equation}\label{eq.convection}
\sum_{k\in\ZZ^{2}}\int_0^t\int_{\mathbb{R}^2}(u\cdot\nabla\varphi_{i,k})|u|^2\intd x\mathrm{d}\tau
\leq C\int_0^t\|u(\tau)\|_{L^2(\RR^2)}\|\nabla u(\tau)\|^2_{L^2(\RR^2)}\intd\tau.
\end{equation}
Now, we turn to show the term involving the pressure. Since $\Div u=0$, the pressure can be expressed by
\begin{equation*}
\pi= -\frac{\Div}{\Delta}\big((u\cdot\nabla)u\big)+\Big(\frac{\Div}{-\Delta}\Big)\big(j\times B\big)
:= \pi_1+\pi_2.
\end{equation*}
Therefore, we have
\begin{equation*}
\int_{\mathbb{R}^2}\varphi_{i,k}(u\cdot\nabla)\pi \intd x=\int_{\mathbb{R}^2}\varphi_{i,k}(u\cdot\nabla)\pi_1 \intd x+\int_{\mathbb{R}^2}\varphi_{i,k}(u\cdot\nabla)\pi_2 \intd x.
\end{equation*}
To bound the second integral in the right side of the above equality, we need to resort to the following lemma.
\begin{lemma}\label{Lemma-p-II}
For each $\varepsilon>0,$ there exist a absolute constant $C>0$ such that
\begin{equation}\label{est-local-pII}
\begin{split}
&\sum_{k\in\ZZ^2}\int_{\mathbb{R}^2}\varphi_{i,k}(u\cdot\nabla)\pi_2 \intd x\\
\leq& C\|j\|^2_{L^2(\RR^2)}\|B\|^2_{L^2(\RR^2)}+\varepsilon\Big(\|\nabla u \|_{L^2(\RR^2)}^2+\lambda_12^{2i}\sum_{k\in\ZZ^2}\big\| \sqrt{\varphi_{i,k}}u \big\|_{L^2(\RR^2)}^2\Big).
\end{split}
\end{equation}
\end{lemma}
\begin{proof}[Proof of Lemma \ref{Lemma-p-II}]
Firstly, we split the integral into the following two parts:
\begin{equation*}
\begin{split}
\int_{\mathbb{R}^2}\varphi_{i,k}(u\cdot\nabla)\pi_2 \intd x
=&\sum_{|\widetilde{k}-k|\leq5}\int_{\RR^2}\nabla\Big(\frac{\Div}{-\Delta}\Big)\big(\phi_{i,\widetilde{k}}(j\times B)\big)\cdot(\varphi_{i,k}u)\intd x\\
&+\sum_{|\widetilde{k}-k|>5}\int_{\RR^2}\nabla\Big(\frac{\Div}{-\Delta}\Big)\big( \phi_{i,\widetilde{k}} (j\times B)\big)\cdot(\varphi_{i,k}u)\intd x.
\end{split}
\end{equation*}
For the first term in the above equality, by integrating by parts and using the H\"older inequality, we have
\begin{equation*}
\begin{split}
&\int_{\RR^2}\nabla\Big(\frac{\Div}{-\Delta}\Big)\big(\phi_{i,\widetilde{k}}(j\times B)\big)\cdot(\varphi_{i,k}u)\intd x\\
=&-2\sum_{|k'-k|\leq5}\int_{\RR^2} \Big(\frac{\Div}{-\Delta}\Big)\big(\phi_{i,\widetilde{k}}(j\times B)\big)\phi_{i,k'}(u\cdot\nabla)\varphi_{i,k} \intd x\\
\leq&2\sum_{|k'-k|\leq5}\Big\|\Big(\frac{\Div}{-\Delta}\Big)\big(\phi_{i,\widetilde{k}}(j\times B)\big)\Big\|_{\dot{B}^{0}_{2,\infty}(\RR^2)}\left\| \phi_{i,k'} u\cdot\nabla \varphi_{i,k} \right\|_{\dot{B}_{2,1}^0(\RR^2)}.
\end{split}
\end{equation*}
By the H\"older inequality, we easily find that
\begin{equation*}
\begin{split}
\Big\|\Big(\frac{\Div}{-\Delta}\Big)\big(\phi_{i,\widetilde{k}}(j\times B)\big)\Big\|_{\dot{B}^{0}_{2,\infty}(\RR^2)}
\leq&C\big\|\phi_{i,\widetilde{k}}(j\times B)\big\|_{\dot{B}^{-1}_{2,\infty}(\RR^2)}\\
\leq& C\big\|\phi_{i,\widetilde{k}}(j\times B)\big\|_{L^1(\RR^2)}\\
\leq&C\big\|\sqrt{\phi_{i,\widetilde{k}}}j\big\|_{L^2(\RR^2)}\big\|\sqrt{\phi_{i,\widetilde{k}}}B\big\|_{L^2(\RR^2)}.
\end{split}
\end{equation*}
Thanks to the Bony paraproduct decomposition, we have
\begin{equation*}
\begin{split}
\left( \phi_{i,k'} u\right)\cdot\nabla \varphi_{i,k} =&\sum_{\ell=1}^2\dot{T}_{\partial_\ell\varphi_{i,k}}
\left(\phi_{i,k'}u^{\ell}\right)+\sum_{\ell=1}^2\dot{T}_{ \phi_{i,k'} u^{\ell}}\left(\partial_\ell \varphi_{i,k} \right)\\
&+\sum_{\ell=1}^2\dot{R}\big(\partial_\ell \varphi_{i,k} , \phi_{i,k'} u^{\ell}\big).
\end{split}
\end{equation*}
A simple calculation allows us to conclude that
\begin{equation*}
\begin{split}
\big\|\dot{T}_{\partial_\ell\varphi_{i,k}}( \phi_{i,k'}u^{\ell})\big\|_{\dot{B}_{2,1}^0(\RR^2)}
\leq&C\sum_{i'\in\mathbb{Z}}\|\dot{S}_{i'-1}\nabla \varphi_{i,k} \|_{L^\infty(\RR^2)}\big\|\dot{\Delta}_{i'}( \phi_{i,k'} u)\big\|_{L^2(\RR^2)}\\
\leq&C\sum_{i'\in\mathbb{Z}}2^{-i'}\|\dot{S}_{i'-1}\nabla \varphi_{i,k} \|_{L^\infty(\RR^2)}2^{i'}\big\|\dot{\Delta}_{i'}(\phi_{i,k'}u)\big\|_{L^2(\RR^2)}\\
\leq&C\big\|\nabla\varphi_{i,k}\big\|_{\dot{B}^{-1}_{\infty,2}(\RR^2)}\big\|\nabla(\phi_{i,k'}u)\big\|_{L^2(\RR^2)}\\
\leq&C\big\|\nabla\varphi_{i,k}\big\|_{L^2(\RR^2)}\big\|\nabla(\phi_{i,k'}u)\big\|_{L^2(\RR^2)}.
\end{split}
\end{equation*}
In the similar fashion as above, $\dot{T}_{\phi_{i,k'}u^{\ell}}\partial_\ell\varphi_{i,k}$ can be bounded as follows:
\begin{equation*}
\begin{split}
&\big\|\dot{T}_{ \phi_{i,k}u^{\ell}} \partial_\ell \varphi_{i,k} \big\|_{\dot{B}^0_{2,1}(\RR^2)}\\
\leq&C\sum_{i'\in\mathbb{Z}}\big\|\dot{S}_{i'-1}(\phi_{i,k'}u)\big\|_{L^\infty(\RR^2)}\big\|\dot{\Delta}_{i'}(\nabla \varphi_{i,k} )\big\|_{L^2(\RR^2)}\\
\leq&C\sum_{i'\in\mathbb{Z}}2^{-i'}\big\|\dot{S}_{i'-1}(\phi_{i,k'}u)\big\|_{L^\infty(\RR^2)}2^{i'}\big\|\dot{\Delta}_{i'}(\nabla \varphi_{i,k})\big\|_{L^2(\RR^2)}\\
\leq&C\big\| \phi_{i,k'}u\big\|_{\dot{B}^{-1}_{\infty,2}(\RR^2)}\big\|\Delta \varphi_{i,k} \big\|_{L^2(\RR^2)}
\leq C\big\|\Delta\varphi_{i,k}\big\|_{L^2(\RR^2)}\big\|\phi_{i,k'}u\big\|_{L^2(\RR^2)}.
\end{split}
\end{equation*}
We turn to show the remainder term $\dot{R}(\partial_{\ell} \varphi_{i,k},  \phi_{i,k'} u^{\ell} )$. We observe that
\begin{equation*}
\begin{split}
\big\|\dot{R}(\partial_{\ell}\varphi_{i,k},\phi_{i,k'}u^{\ell})\big\|_{\dot{B}_{2,1}^0(\RR^2)}\leq&C\big\|\dot{R}(\nabla\varphi_{i,k},\phi_{i,k'}u)\big\|_{\dot{B}_{1,1}^1(\RR^2)}\\
\leq&C\sum_{i'\in\mathbb{Z}}2^{i'}\big\|\dot{\Delta}_{i'}\nabla\varphi_{i,k}\big\|_{L^2(\RR^2)}\big\|\widetilde{\dot{\Delta}}_{i'}(\phi_{i,k'}u)\big\|_{L^2(\RR^2)}\\
\leq&C \big\|\nabla\varphi_{i,k}\big\|_{L^2(\RR^2)}\big\|\phi_{i,k'}u\big\|_{\dot{B}^{1}_{2,2}(\RR^2)}\\
\leq&C \big\|\nabla\varphi_{i,k}\big\|_{L^2(\RR^2)}\big\|\nabla(\phi_{i,k'}u)\big\|_{L^2(\RR^2)}.
\end{split}
\end{equation*}
Hence, we have
\begin{equation}\label{eq.pressure-jb-low}
\begin{split}
&\int_{\RR^2}\nabla\Big(\frac{\Div}{-\Delta}\Big)\big(\phi_{i,\widetilde{k}}j\times B\big)\cdot(\varphi_{i,k}u)\intd x\\
\leq&C\sum_{|k'-k|\leq5}\big\|\sqrt{\phi_{i,\widetilde{k}}}j\big\|_{L^2(\RR^2)}\big\|\sqrt{\phi_{i,\widetilde{k}}}B\big\|_{L^2(\RR^2)}\big\|\nabla(\varphi_{i,k'}u)\big\|_{L^2(\RR^2)}\\
\leq&C\big\|\sqrt{\phi_{i,\widetilde{k}}}j\big\|^2_{L^2(\RR^2)}\big\|\sqrt{\phi_{i,\widetilde{k}}}B\big\|^2_{L^2(\RR^2)}+\varepsilon \sum_{|k'-k|\leq5} \big\|\nabla(\phi_{i,k'}u)\big\|^2_{L^2(\RR^2)}.
\end{split}
\end{equation}
Now, we turn to bound the integral term
$$
\sum_{|\widetilde{k}-k|>5}\int_{\RR^2}\nabla\Big(\frac{\Div}{-\Delta}\Big)\big( \phi_{i,\widetilde{k}} (j\times B)\big)\cdot(\varphi_{i,k}u)\intd x.$$
The term $\nabla\big(\frac{\Div}{-\Delta}\big)\big(\phi_{i,\widetilde{k}}(j\times B)\big)$ can be rewritten as
\begin{equation*}
\nabla\Big(\frac{\Div}{-\Delta}\Big)\big( \phi_{i,\widetilde{k}}(j\times B)\big)= \int_{\RR^2}K(x-z) \phi_{i,\widetilde{k}}(z) \big(j\times B\big)(z)\intd z,
\end{equation*}
where the kernel $K(x)$ satisfies $|K|\leq c\frac{1}{|x|^2}$.

Hence, the above equality allows us to write
\begin{equation*}
\begin{split}
&\int_{\RR^2}\sqrt{\varphi_{i,k}}\Big(\frac{\Div}{-\Delta}\Big) \big(\phi_{i,\widetilde{k}}(j\times B)\big)\cdot(\sqrt{\varphi_{i,k}}u)\intd x\\
= &
\int_{\RR^2}\sqrt{\varphi_{i,k}}K \ast \big(\phi_{i,\widetilde{k}}(j\times B)\big)\cdot( \sqrt{\varphi_{i,k}}u)\intd x.
\end{split}
\end{equation*}
Since $|\widetilde{k}-k|> 5$, by the H\"older inequality, we have
\begin{equation*}
\begin{split}
&\int_{\RR^2}\sqrt{\varphi_{i,k}}K\ast\big |\phi_{i,\widetilde{k}}(j\times B)\big |\sqrt{\varphi_{i,k}}|u|\intd x\\
=&\int_{\RR^2}\sqrt{\varphi_{i,k}}K_{k\widetilde{k}}\ast \big|\phi_{i,\widetilde{k}}(j\times B)\big |\sqrt{\varphi_{i,k}}|u|\intd x\\
\leq&\big\|\sqrt{\varphi_{i,k}}\big(K_{k\widetilde{k}}\ast \big|\phi_{i,\widetilde{k}}(j\times B)\big |\big)\big\|_{L^\infty(\RR^2)}\big\|\sqrt{\varphi_{i,k}}u\big\|_{L^1(\RR^2)}\\
\leq&C2^{-i}\big\|\sqrt{\varphi_{i,k}}\big(K_{k\widetilde{k}}\ast \big|\phi_{i,\widetilde{k}}(j\times B)\big |\big)\big\|_{L^\infty(\RR^2)}\big\|\sqrt{\varphi_{i,k}}u\big\|_{L^2(\RR^2)},
\end{split}
\end{equation*}
where $K_{k\widetilde{k}}=c\frac{1}{|x|^2}\chi_{B^{c}_{2^{-(1+i)}|\widetilde{k}-k|}(0)}$ deduced from the support property of $\varphi_{i,k}$ and $\phi_{i,\widetilde{k}}$.

We get from the Young inequality that
\begin{equation*}
\begin{split}
\big\|\sqrt{\varphi_{i,k}}\big(K_{k\widetilde{k}}\ast |\phi_{i,\widetilde{k}}(j\times B)\big |\big)\big\|_{L^\infty(\RR^2)}\leq&\|K_{k\widetilde{k}}\|_{L^\infty(\RR^2)}\big\|\phi_{i,\widetilde{k}}(j\times B)\big\|_{L^1(\RR^2)}\\
\leq&\frac{C2^{2i}}{|\widetilde{k}-k|^2}\big\|\sqrt{\phi_{i,\widetilde{k}}}j\big\|_{L^2(\RR^2)}\big\|\sqrt{\phi_{i,\widetilde{k}}}B\big\|_{L^2(\RR^2)}.
\end{split}
\end{equation*}
Therefore, in virtue of the discrete Young inequality and Cauchy-Schwarz inequality, we obtain
%
%
\begin{equation*}
\begin{split}
&\sum_{k\in\ZZ^2}\sum_{|k'-k|>5}\int_{\RR^2}\nabla\Big(\frac{\Div}{-\Delta}\Big)\big( \phi_{i,\widetilde{k}} (j\times B)\big)\cdot(\varphi_{i,k}u)\intd x\\
\leq& C\sum_{k\in\ZZ^2}2^{i}\sum_{|\widetilde{k}-k|>5}\frac{1}{|\widetilde{k}-k|^{2}}\big\|\sqrt{\phi_{i,\widetilde{k}}}j\big\|_{L^2(\RR^2)}\big\|\sqrt{\phi_{i,\widetilde{k}}}B\big\|_{L^2(\RR^2)}\big\|\sqrt{\varphi_{i,k}}u\big\|_{L^2(\RR^2)}\\
\leq&C\|j\|^2_{L^2(\RR^2)}\|B\|^2_{L^2(\RR^2)}+\varepsilon\lambda_1\sum_{k\in\ZZ^2}2^{2i}\big\| \sqrt{\varphi_{i,k}} u \big\|_{L^2(\RR^2)}^2.
\end{split}
\end{equation*}
This estimate together with \eqref{eq.pressure-jb-low} yields
\begin{equation}\label{eq.Pressure-jb}
\begin{split}
&\sum_{k\in\ZZ^2}\int_{\RR^2}\nabla\Big(\frac{\Div}{-\Delta}\Big)\big(j\times B\big)\cdot(\varphi_{i,k}u)\intd x\\
\leq& C\|j\|^2_{L^2(\RR^2)}\|B\|^2_{L^2(\RR^2)}+\varepsilon\sum_{k\in\ZZ^2}\Big(\lambda_12^{2i}\sum_{|\tilde{k}-k|\leq5}\| \phi_{i,\tilde{k}} u\|_{L^2(\RR^2)}^2+\big\| \sqrt{\varphi_{i,k}} \nabla u \big\|_{L^2(\RR^2)}^2\Big)\\
\leq& C\|j\|^2_{L^2(\RR^2)}\|B\|^2_{L^2(\RR^2)}+\varepsilon\sum_{k\in\ZZ^2}\Big(\lambda_12^{2i}\| \phi_{i, k } u\|_{L^2(\RR^2)}^2+\big\| \sqrt{\phi_{i,k}} \nabla u \big\|_{L^2(\RR^2)}^2\Big).
\end{split}
\end{equation}
Hence, we end the proof of Lemma \ref{Lemma-p-II}.
\end{proof}
Now, we need to bound the integral
$\int_{\mathbb{R}^2}\varphi_{i,k}(u\cdot\nabla)\pi_1 \intd x$ which is contained in the lemma below.
\begin{lemma}\label{Lemma-p-I}
For each $\varepsilon>0,$ there exist a absolute constant $C>0$ such that
\begin{equation}\label{est-p-I}
\sum_{k\in\ZZ^2}\int_{\mathbb{R}^2}\varphi_{i,k}(u\cdot\nabla)\pi_1 \intd x\leq C  \|u\|^2_{L^2(\RR^2)}\|\nabla u\|^2_{L^2(\RR^2)}+\varepsilon\lambda_1\sum_{k\in\ZZ^2}2^{2i}\big\|\sqrt{\phi_{i,k}}u\big\|^2_{L^2(\RR^2)}.
\end{equation}
\end{lemma}
\begin{proof}[Proof of Lemma \ref{Lemma-p-I}]
We see that
\[\int_{\mathbb{R}^2}\varphi_{i,k}(u\cdot\nabla)\pi_1 \intd x=\int_{\RR^2}\varphi_{i,k}u\cdot\nabla\Big(\frac{\Div}{-\Delta}\Big)\big((u\cdot\nabla) u\big)\intd x.\]
One can write
\begin{equation*}
\begin{split}
\int_{\RR^2}\varphi_{i,k}u\cdot\nabla\Big(\frac{\Div}{-\Delta}\Big)\big((u\cdot\nabla) u\big)\intd x
=&\sum_{|k'-k|\leq5}\int_{\RR^2}\varphi_{i,k}u\cdot\nabla\Big(\frac{\Div\Div}{-\Delta}\Big)\big(\phi_{i,k'}(u\otimes u)\big)\intd x\\
&+\sum_{|k'-k|>5}\int_{\RR^2}\varphi_{i,k}u\cdot\nabla\Big(\frac{\Div\Div}{-\Delta}\Big)\big(\phi_{i,k'}(u\otimes u)\big)\intd x.
\end{split}
\end{equation*}
Integration by parts leads to
\begin{equation*}
\begin{split}
&\int_{\RR^2}\varphi_{i,k}u\cdot\nabla\Big(\frac{\Div\Div}{-\Delta}\Big)\big(\phi_{i,k'}(u\otimes u)\big)\intd x\\
=&-2\int_{\RR^2}\Big(\frac{\Div\Div}{-\Delta}\Big)\big(\phi_{i,k'}(u\otimes u)\big)u\cdot\nabla\varphi_{i,k}\intd x\\
=&-2\sum_{|\tilde{k}-k|\leq5}\int_{\RR^2}\Big(\frac{\Div\Div}{-\Delta}\Big)\big(\phi_{i,k'}(u\otimes u)\big)\phi_{i,\tilde{k}}u\cdot\nabla\varphi_{i,k}\intd x.
\end{split}
\end{equation*}
Moreover, by the H\"older inequality, we immediately have
\begin{equation}\label{eq.p-I-L}
\begin{split}
&\int_{\RR^2}\Big(\frac{\Div\Div}{-\Delta}\Big)\big(\phi_{i,k'}(u\otimes u)\big) \phi_{i,\tilde{k}}u\cdot\nabla\varphi_{i,k}\intd x\\
\leq&\Big\|\Big(\frac{\Div\Div}{-\Delta}\Big)\big(\phi_{i,k'}(u\otimes u)\big)\Big\|_{L^2(\RR^2)}\big\|\nabla\varphi_{i,k}\big\|_{L^\infty(\RR^2)}\big\| \varphi_{i,\tilde{k}} u\big\|_{L^2(\RR^2)}.
\end{split}
\end{equation}
The H\"older inequality and the interpolation theorem give
\begin{equation*}
\begin{split}
\sum_{k\in\ZZ^2}\Big\|\Big(\frac{\Div\Div}{-\Delta}\Big)\big(\phi_{i,k'}(u\otimes u)\big)\Big\|^{2}_{L^2(\RR^2)}\leq&C\sum_{k\in\ZZ^2}\big\|\phi_{i,k'}(u\otimes u)\big\|^{2}_{L^2(\RR^2)}\\
\leq&C\|u\|_{L^4(\RR^2)}^4\leq C\|u\|^{2}_{L^2(\RR^2)}\|\nabla u\|^{2}_{L^2(\RR^2)}.
\end{split}
\end{equation*}
Plugging this estimate in \eqref{eq.p-I-L} provides
\begin{equation}\label{eq.-p-I-LL}
\begin{split}
 &\sum_{k\in\ZZ^2}\int_{\RR^2}\Big(\frac{\Div\Div}{-\Delta}\Big)\big(\phi_{i,k'}(u\otimes u)\big)u\cdot\nabla \varphi_{i,k}\intd x\\
\leq&C\sum_{k\in\ZZ^2} 2^i\Big\|\Big(\frac{\Div\Div}{-\Delta}\Big)\big(\phi_{i,k'}(u\otimes u)\big)\Big\|_{L^2(\RR^2)}\sum_{|\tilde{k}-k|\leq5}\big\| \phi_{i,\tilde{k} }u\big\|_{L^2(\RR^2)}\\
\leq & C \sum_{k\in\ZZ^2}\|u\|^2_{L^2(\RR^2)}\|\nabla u\|^2_{L^2(\RR^2)}+\varepsilon\lambda_{1}\sum_{k\in\ZZ^2}2^{2i}\big\|\sqrt{\phi_{i,k}}u\big\|^2_{L^2(\RR^2)}.
\end{split}
\end{equation}
On the other hand, we observe that
\begin{equation*}
\begin{split}
\nabla\Big(\frac{\Div\Div}{-\Delta}\Big)\big(\phi_{i,k'}(u\otimes u)\big)=G\ast\big(\phi_{i,k'}(u\otimes u)\big),
\end{split}
\end{equation*}
where $G(x)$ satisfies $|G(x)|\leq\frac{c}{|x|^{3}}$.

It follows that
\begin{equation*}
\begin{split}
\int_{\RR^2}\nabla\Big(\frac{\Div\Div}{-\Delta}\Big)\big(\phi_{i,k'}(u\otimes u)\big)\cdot(\varphi_{i,k}u)\intd x=\int_{\RR^2} \sqrt{\varphi_{i,k}}G\ast\big(\phi_{i,k'}(u\otimes u)\big)\cdot(\sqrt{\varphi_{i,k}}u)\intd x.
\end{split}
\end{equation*}
Since $|k'-k|>5$, we find that the supports of  $\varphi_{i,k}$ and $\phi_{i,k'}$ are disjoint and thus we have
\[\sqrt{\varphi_{i,k}}G\ast\big(\phi_{i,k'}(u\otimes u)\big)=\sqrt{\varphi_{i,k}}(G\chi^c_{[0,2^{-(1+i)}|k'-k|]})\ast\big(\phi_{i,k'}(u\otimes u)\big).\]
Letting $G_{kk'}:=G\chi^c_{[0,|k'-k|2^{-(1+i)}]}$ and using the H\"older inequality, we immediately get
\begin{equation*}
\begin{split}
&\int_{\RR^2}\nabla\Big(\frac{\Div\Div}{-\Delta}\Big)\big(\phi_{i,k'}(u\otimes u)\big)\cdot(\varphi_{i,k}u)\intd x\\
=&\int_{\RR^2} \sqrt{\varphi_{i,k}}G_{ik}\ast\big(\phi_{i,k'}(u\otimes u)\big)\cdot(\sqrt{\varphi_{i,k}}u)\intd x\\
\leq&\big\|\sqrt{\varphi_{i,k}}\big\|_{L^\infty}\big\|G_{kk'}\ast\big(\phi_{i,k'}(u\otimes u)\big)\big\|_{L^\infty(\RR^2)}\big\|\sqrt{\varphi_{i,k}}u\big\|_{L^1(\RR^2)}\\
\leq&C2^{-i}\big\|G_{kk'}\ast\big(\phi_{i,k'}(u\otimes u)\big)\big\|_{L^\infty(\RR^2)}\big\|\sqrt{\varphi_{i,k}}u\big\|_{L^2(\RR^2)}.
\end{split}
\end{equation*}
By the discrete Young inequality, we have
\begin{equation*}
\begin{split}
\big\|G_{kk'}\ast\big(\phi_{i,k'}(u\otimes u)\big)\big\|_{L^\infty(\RR^2)}\leq&\|G_{kk'}\|_{L^2(\RR^2)}\big\|\phi_{i,k'}(u\otimes u)\big\|_{L^2(\RR^2)}\\
\leq&C\frac{2^{2i}}{|k'-k|^3}\big\|\sqrt{\phi_{i,k'}}(u\otimes u)\big\|_{L^2(\RR^2)}.
\end{split}
\end{equation*}
This estimate enables us to infer that
\begin{equation*}
\begin{split}
&\sum_{k\in\ZZ^2}\sum_{|k'-k|>5}\int_{\RR^2}\nabla\Big(\frac{\Div\Div}{-\Delta}\Big)\big(\phi_{i,k'}(u\otimes u)\big)\cdot(\varphi_{i,k}u)\intd x\\
\leq&C\sum_{k\in\ZZ^2}2^{i}\big\|\sqrt{\varphi_{i,k}}u\big\|_{L^2(\RR^2)}\sum_{|k'- k|>5}\frac{1}{|k'-k|^3}\big\|\sqrt{\phi_{i,k'}}(u\otimes u)\big\|_{L^2(\RR^2)}\\
\leq&C2^{i}\Big(\sum_{k\in\ZZ^2}\big\|\sqrt{\varphi_{i,k}}u\big\|^2_{L^2(\RR^2)} \Big)^{\frac12}\|u\otimes u \|_{L^2(\RR^2)}\\
\leq& C2^{i}\Big(\sum_{k\in\ZZ^2}\big\|\sqrt{\varphi_{i,k}}u\big\|^2_{L^2(\RR^2)} \Big)^{\frac12}\|u\|_{L^2(\RR^2)}\|\nabla u\|_{L^2(\RR^2)}\\
\leq&C \|u\|^2_{L^2(\RR^2)}\|\nabla u\|^2_{L^2(\RR^2)}+\varepsilon\lambda_1\sum_{k\in\ZZ^2}2^{2i}\big\|\sqrt{\phi_{i,k}}u\big\|^2_{L^2(\RR^2)}.
\end{split}
\end{equation*}

This estimate together with estimate \eqref{eq.-p-I-LL} gives the desired result in Lemma \ref{Lemma-p-I}.
\end{proof}
From Lemma \ref{Lemma-p-II} and Lemma \ref{Lemma-p-I}, we know that
\begin{equation}\label{eatimate-p}
\begin{split}
&\sum_{k\in\ZZ^2}\int_0^t \int_{\mathbb{R}^2}\varphi_{i,k}(u\cdot\nabla)\pi \intd x \mathrm{d}\tau\\
\leq& C\int_0^t\|j(\tau)\|^2_{L^2(\RR^2)}\|B(\tau)\|^2_{L^2(\RR^2)}\intd\tau+C\int_0^t \|u(\tau)\|^2_{L^2(\RR^2)}\|\nabla u(\tau)\|^2_{L^2(\RR^2)}\intd\tau\\&+\varepsilon\sum_{k\in\ZZ^2}\int_0^t\Big(\lambda_12^{2i}\sum_{|k''-k|\leq5}\big\|\sqrt{\phi_{i,k''}} u(\tau) \big\|_{L^2(\RR^2)}^2+\big\| \sqrt{\varphi_{i,k}} \nabla u(\tau) \big\|_{L^2(\RR^2)}^2\Big)\intd\tau.
\end{split}
\end{equation}
Repeating the same argument as used in \eqref{eq.boundary-final}, we can show that
\begin{equation}\label{eq.ubun}
-\frac12\int_{\partial B_{2^{-i}}(k)}|u|^{2}\nabla\varphi_{i,k}\cdot n\intd S\leq C\sum_{|k'-k|\leq2}\left(\big\|\sqrt{\phi_{i,k'}}u\big\|^{2}_{L^2(\RR^2)}+\big\|\sqrt{\phi_{i,k'}}\nabla u\big\|^{2}_{L^2(\RR^2)}\right).
\end{equation}
Combining \eqref{eq.convection}, \eqref{eq.Pressure-jb} \eqref{eatimate-p} and \eqref{eq.ubun} gives
\begin{equation}\label{eq.est-u}
\begin{split}
&\sum_{k\in\ZZ^2}\big\|\sqrt{\varphi_{i,k}} u(t)\big\|_{L^2(\RR^2)}^2+2\sum_{k\in\ZZ^2}\int_0^t\big\|\sqrt{\varphi_{i,k}}(\nabla u)(\tau)\big\|^2_{L^2(\RR^2)}\intd\tau\\
&+ 2\lambda_12^{2i}\sum_{k\in\ZZ^2} \int_0^t\big\|\sqrt{\varphi_{i,k}}u(\tau)\big\|^2_{L^2(\RR^2)}\intd\tau\\
\leq&\sum_{k\in\ZZ^2}\big\|\sqrt{\varphi_{i,k}} u_0\big\|_{L^2(\RR^2)}^2+\sum_{k\in\ZZ^2}\int_0^t\int_{\mathbb{R}^2}\varphi_{i,k}u\cdot(j\times B)\intd x\intd\tau\\
&+ C\int_0^t\|u(\tau)\|_{L^2(\RR^2)}\|\nabla u(\tau)\|^2_{L^2(\RR^2)}\intd\tau+C\sum_{k\in\ZZ^2}\sum_{|k'-k|\leq2}\int_0^t\|\sqrt{\phi_{i,k}}u(\tau)\big\|^{2}_{L^2(\RR^2)}\intd\tau\\
&+ C\int_0^t\|j(\tau)\|^2_{L^2(\RR^2)}\|B(\tau)\|^2_{L^2(\RR^2)}\intd\tau+C\int_0^t\|u(\tau)\|^2_{L^2(\RR^2)}\|\nabla u(\tau)\|^2_{L^2(\RR^2)}\intd\tau\\&+\varepsilon\sum_{k\in\ZZ^2}\int_0^t\Big(\lambda_12^{2i}\sum_{|\tilde{k}-k|\leq5}\big\|\phi_{i,\tilde{k}} u(\tau) \big\|_{L^2(\RR^2)}^2+\big\| \sqrt{\varphi_{i,k}} \nabla u(\tau) \big\|_{L^2(\RR^2)}^2\Big)\intd\tau\\
&+C\sum_{k\in\ZZ^2}\sum_{|k'-k|\leq2}\int_0^t\|\sqrt{\phi_{i,k}}\nabla u(\tau)\big\|^{2}_{L^2(\RR^2)}\intd\tau.
\end{split}
\end{equation}
Inserting estimate \eqref{eq.est-jXb} into the above estimate, we readily have
\begin{equation}\label{eq.u-improve}
\begin{split}
& \|u(t)\|_{L^2(\RR^2)}^2 +\int_0^t\|\nabla u(\tau)\|_{L^2(\RR^2)}^2\intd\tau +\lambda_{1}\sup_{i\in\ZZ}2^{2i}\int_0^t\sum_{k\in\mathbb{Z}^2}\big\|\sqrt{\phi_{i,k}}u(\tau)\big\|^2_{L^2(\RR^2)}\intd\tau\\
 \leq&\|u_0\|_{L^2(\RR^2)}^2 + C\big(1+\|u\|^2_{L^\infty_tL^2(\RR^2)}\big)\|\nabla u\|^2_{L^2_tL^2(\RR^2)}+C\|B\|^2_{L^\infty_tL^2(\RR^2)}\|j\|^2_{L^2_tL^2(\RR^2)}\\
 &+C\|j\|_{L^2_tL^2(\RR^2)}\big\|\nabla u\big\|_{L^2_tL^2(\RR^2)}\|B\|_{L^\infty_tL^2(\RR^2)}\\
 &+C\sup_p\big\|S_{p-1}|u|\big\|_{L^2_tL^\infty(\RR^2)}\Big(\sum_{|q|\geq N}\big\|\dot{\Delta}_qj\big\|^{2}_{L^2_tL^2(\RR^2)}\Big)^{\frac12}\|B\|_{L^\infty_t\dot{B}^0_{2,2}(\RR^2)}\\
&+CN\sup_p\big\|S_{p-1}|u|\big\|_{L^2_tL^\infty(\RR^2)}\|j\|_{L^2_tL^2(\RR^2)}\|B\|_{L^\infty_tL^2(\RR^2)}\\
&+\varepsilon\lambda_12^{2i}\sum_{\tilde{k}\in\mathbb{Z}^2}\sum_{|\tilde{k}-k'|<3}\int_0^t\big\|\sqrt{\phi_{i,k'}}u(\tau)\big\|^2_{L^2(\RR^2)}\intd\tau.
 \end{split}
\end{equation}
Moreover, by estimate \eqref{eq.L2estimate}, we have
\begin{equation}\label{eq.est-F-u}
\begin{split}
 &\|u(t)\|_{L^2(\RR^2)}^2 +\int_0^t\|\nabla u(\tau)\|_{L^2(\RR^2)}^2\intd\tau +\lambda_{1}\sup_{i\in\ZZ}2^{2i}\int_0^t\sum_{k\in\mathbb{Z}^2}\big\|\sqrt{\phi_{i,k}}u(\tau)\big\|^2_{L^2(\RR^2)}\intd\tau\\
 \leq&Ct+C\sup_p\big\|S_{p-1}|u|\big\|_{L^2_tL^\infty(\RR^2)}\Big(\sum_{|q|\geq N}\big\|\dot{\Delta}_qj\big\|^{2}_{L^2_tL^2(\RR^2)}\Big)^{\frac12}\|B\|_{\widetilde{L}^\infty_t\dot{B}^0_{2,2}(\RR^2)}
\\& +CN\sup_p\big\|S_{p-1}|u|\big\|_{L^2_tL^\infty(\RR^2)}.
\end{split}
\end{equation}
where the positive constant $C$ only depends on the initial data, independent of $t$.

Next, applying $\dot{\Delta}_q$ to the second equation and the third equation, respectively, and taking $L^2$-norm of the resulting equations,  we have
\begin{equation}\label{eq.est-E}
\begin{split}
&\sup_{\tau\in[0,t]}\big\|\dot{\Delta}_qE(\tau)\big\|_{L^2(\RR^2)}^2+\sup_{\tau\in[0,t]}\big\|\dot{\Delta}_qB(\tau)\big\|_{L^2(\RR^2)}^2+\int_0^t\big\|\dot{\Delta}_qj(\tau)\big\|_{L^2(\RR^2)}^2\intd\tau\\
\leq &\big\|\dot{\Delta}_qE_0\big\|_{L^2(\RR^2)}^2+\big\|\dot{\Delta}_qB_0\big\|_{L^2(\RR^2)}^2-\int_0^t\int_{\RR^2}\dot{\Delta}_q(u\times B)\cdot\dot{\Delta}_qj\intd x\mathrm{d}\tau.
\end{split}
\end{equation}
\begin{lemma}\label{Lem-F-JXB}
There holds that
\begin{equation}\label{eq.Est-q-JXB}
\begin{split}
&\int_0^t\int_{\RR^2}\dot{\Delta}_q(u\times B)\cdot\dot{\Delta}_qj\intd x\mathrm{d}\tau\\
\leq&\sup_{p\in\ZZ}\|\dot{S}_{p}u\|_{L^2_tL^\infty(\RR^2)}\|\dot{\Delta}_qj\|_{L^2_tL^2(\RR^2)}\sum_{|p-q|\leq5}\|\dot{\Delta}_pB\|_{L^\infty_tL^2(\RR^2)}\\
&+C\int_0^tc_q\|\dot{\Delta}_qj(\tau)\|_{L^2(\RR^2)}\|B(\tau)\|_{L^2(\RR^2)}\|\nabla u(\tau)\|_{L^2(\RR^2)}\intd\tau,
\end{split}
\end{equation}
where $c_q\in\ell^2.$
\end{lemma}
\begin{proof}[Proof of Lemma \ref{Lem-F-JXB}]
Thanks to the Bony decomposition, we decompose $u^{\ell}B^{m}$ into three parts:
\[u^{\ell} B^{m}=\dot{T}_{u^\ell}B^m+\dot{T}_{B^m}u^{\ell}+\dot{R}(u^{\ell},B^{m}).\]
By the H\"older inequality, we have
\begin{equation*}
\begin{split}
 \int_{\RR^2}\dot{\Delta}_q(\dot{T}_{B^m} u^{\ell})\dot{\Delta}_qj^{i}\intd x =& \int_{\RR^2}\dot{\Delta}_q\Big(\sum_{|p-q|\leq5}\dot{S}_{p-1}B^{m} \dot{\Delta}_pu^{\ell}\Big)\dot{\Delta}_qj^{i}\intd x\\
 \leq&\sum_{|p-q|\leq5}\|\dot{S}_{p-1}B\|_{L^\infty(\RR^2)}\|\dot{\Delta}_pu\|_{L^2(\RR^2)}\|\dot{\Delta}_qj\|_{L^2(\RR^2)}\\
 =&\sum_{|p-q|\leq5}2^{-p}\|\dot{S}_{p-1}B\|_{L^\infty(\RR^2)}2^{p}\|\dot{\Delta}_pu\|_{L^2(\RR^2)}\|\dot{\Delta}_qj\|_{L^2(\RR^2)}\\
 \leq&Cc_q\|\dot{\Delta}_qj\|_{L^2(\RR^2)}\|B\|_{\dot{B}^{-1}_{\infty,2}(\RR^2)}\|u\|_{\dot{B}^1_{2,2}(\RR^2)}\\
 \leq& Cc_q\|\dot{\Delta}_qj\|_{L^2(\RR^2)}\|B\|_{L^2(\RR^2)}\|\nabla u\|_{L^2(\RR^2)},
\end{split}
\end{equation*}
where $c_q\in\ell^2.$

This estimate means
\begin{equation}\label{eq.-q-JXB-I}
\begin{split}
&\int_0^t \int_{\RR^2}\dot{\Delta}_q(\dot{T}_{B^m} u^{\ell})\dot{\Delta}_qj^{i}\intd x\mathrm{d}\tau\\
\leq &C\int_0^tc_q\|\dot{\Delta}_qj(\tau)\|_{L^2(\RR^2)}\|B(\tau)\|_{L^2(\RR^2)}\|\nabla u(\tau)\|_{L^2(\RR^2)}\intd\tau.
\end{split}
\end{equation}
By the H\"older inequality again and the Bernstein inequality, the remainder term can be bounded as follows:
\begin{align*}
 \int_{\RR^2}\dot{\Delta}_q\big(\dot{R}(B^{m}, u^{\ell})\big)\dot{\Delta}_qj^{i}\intd x =& \int_{\RR^2}\dot{\Delta}_q\Big(\sum_{p\geq q-5}\dot{\Delta}_{p}B^{m} {\widetilde{\dot\Delta}}_pu^{\ell}\Big)\dot{\Delta}_qj^{i}\intd x\\
 \leq&\sum_{p\geq q-5}\big\|\dot{\Delta}_q\big(\dot{\Delta}_{p}B{\widetilde{\dot\Delta}}_pu\big)\big\|_{L^2(\RR^2)}\|\dot{\Delta}_qj\|_{L^2(\RR^2)}\\
 \leq&\sum_{p\geq q-5}2^{q}\big\| \dot{\Delta}_q\big(\dot{\Delta}_{p}B{\widetilde{\dot\Delta}}_pu\big)\big\|_{L^1(\RR^2)}\|\dot{\Delta}_qj\|_{L^2(\RR^2)} \\
 \leq& \sum_{p\geq q-5}2^{q}\big\| \dot{\Delta}_{p}B\big \|_{L^2(\RR^2)}\big\|{\widetilde{\dot\Delta}}_pu \big\|_{L^2(\RR^2)}\|\dot{\Delta}_qj\|_{L^2(\RR^2)}.
\end{align*}
Moreover, by the Young inequality, we obtain
\begin{equation*}
\begin{split}
  \int_{\RR^2}\dot{\Delta}_q\big(\dot{R}(B^{m}, u^{\ell})\big)\dot{\Delta}_qj^{i}\intd x
 \leq& \sum_{p\geq q-5}2^{-(p-q)}\big\|{\dot\Delta}_{p}B\big \|_{L^2(\RR^2)}2^{p}\big\|{\widetilde{\dot\Delta}}_pu \big\|_{L^2(\RR^2)}\|\dot{\Delta}_qj\|_{L^2(\RR^2)}\\
 \leq& Cc_q\|\dot{\Delta}_qj\|_{L^2(\RR^2)}\|B\|_{L^2(\RR^2)}\|\nabla u\|_{L^2(\RR^2)},
 \end{split}
 \end{equation*}
 where $c_q\in\ell^2.$

 Thus we have
 \begin{equation}\label{eq.-q-JXB-II}
 \begin{split}
&\int_0^t \int_{\RR^2}\dot{\Delta}_q\big(\dot{R}(B^{m}, u^{\ell})\big)\dot{\Delta}_qj^{i}\intd x\mathrm{d}\tau\\
\leq& C\int_0^tc_q\|\dot{\Delta}_qj(\tau)\|_{L^2(\RR^2)}\|B(\tau)\|_{L^2(\RR^2)}\|\nabla u(\tau)\|_{L^2(\RR^2)}\intd\tau.
\end{split}
\end{equation}
At last, we deal with the para-product term involving the low frequency of $u$. Note that
\begin{equation*}
\begin{split}
 \int_0^t\int_{\RR^2}\dot{\Delta}_q(\dot{T}_{u^\ell} B^{m})\dot{\Delta}_qj^{i}\intd x\mathrm{d}\tau =& \int_0^t\int_{\RR^2}\dot{\Delta}_q\Big(\sum_{|p-q|\leq5}\dot{S}_{p-1}u^{\ell} \dot{\Delta}_pB^{m}\Big)\dot{\Delta}_qj^{i}\intd x\mathrm{d}\tau\\
 \leq&\sum_{|p-q|\leq5}\|\dot{S}_{p-1}u\|_{L^2_tL^\infty(\RR^2)}\|\dot{\Delta}_pB\|_{L^\infty_tL^2(\RR^2)}\|\dot{\Delta}_qj\|_{L^2_tL^2(\RR^2)}\\
\leq&\sup_{p\in\ZZ}\|\dot{S}_{p}u\|_{L^2_tL^\infty(\RR^2)}\|\dot{\Delta}_qj\|_{L^2_tL^2(\RR^2)}\sum_{|p-q|\leq5}\|\dot{\Delta}_pB\|_{L^\infty_tL^2(\RR^2)}.
\end{split}
\end{equation*}
Combining this estimate with estimate  \eqref{eq.-q-JXB-I} and estimate \eqref{eq.-q-JXB-II} implies the required result.
\end{proof}
Now, we come back to the proof of Proposition \ref{prop-improve}. Inserting  estimate \eqref{eq.Est-q-JXB} into \eqref{eq.est-E} leads to
\begin{equation*}
\begin{split}
&\sup_{\tau\in[0,t]}\big\|\dot{\Delta}_qE(\tau)\big\|_{L^2(\RR^2)}^2+\sup_{\tau\in[0,t]}\big\|\dot{\Delta}_qB(\tau)\big\|_{L^2(\RR^2)}^2+\int_0^t\big\|\dot{\Delta}_qj(\tau)\big\|_{L^2(\RR^2)}^2\intd\tau\\
\leq & \big\|\dot{\Delta}_qE_0\big\|_{L^2(\RR^2)}^2+\big\|\dot{\Delta}_qB_0\big\|_{L^2(\RR^2)}^2+\sup_{p\in\ZZ}\|\dot{S}_{p}u\|_{L^2_tL^\infty(\RR^2)}\|\dot{\Delta}_qj\|_{L^2_tL^2(\RR^2)}\sum_{|p-q|\leq5}\|\dot{\Delta}_pB\|_{L^\infty_tL^2(\RR^2)}\\
&+C\int_0^tc_q\|\dot{\Delta}_qj(\tau)\|_{L^2(\RR^2)}\|B(\tau)\|_{L^2(\RR^2)}\|\nabla u(\tau)\|_{L^2(\RR^2)}\intd\tau.
\end{split}
\end{equation*}
Summing the above inequality over $q\in\mathbb{Z}$ provides us
\begin{equation}\label{eq.EB-improve}
\begin{split}
&\|E\|^2_{\widetilde{L}^\infty_t\dot{B}^0_{2,2}(\RR^2)}+\|B\|^2_{\widetilde{L}^\infty_t\dot{B}^0_{2,2}(\RR^2)}+\int_0^t\|j(\tau)\|_{L^2(\RR^2)}^2\intd\tau\\
\leq&\|E_0\|^2_{L^2(\RR^2)}+\|B_0\|_{L^2(\RR^2)}^2+C\int_0^t\sum_{q\in\ZZ}c_q\|\dot{\Delta}_qj\|_{L^2(\RR^2)}\|B\|_{L^2(\RR^2)}\|\nabla u\|_{L^2(\RR^2)}\intd\tau\\
&+\sup_{p\in\ZZ}\|\dot{S}_{p}u\|_{L^2_tL^\infty(\RR^2)}\sum_{q\in\ZZ}\Big(\|\dot{\Delta}_qj\|_{L^2_tL^2(\RR^2)}\sum_{|p-q|\leq5}\|\dot{\Delta}_pB\|_{L^\infty_tL^2(\RR^2)}\Big).
\end{split}
\end{equation}
On one hand, by the H\"older inequality, one has
\begin{equation}\label{eq.EB-1}
\begin{split}
&\int_0^t\sum_{q\in\ZZ}c_q\|\dot{\Delta}_qj(\tau)\|_{L^2(\RR^2)}\|B(\tau)\|_{L^2(\RR^2)}\|\nabla u(\tau)\|_{L^2(\RR^2)}\intd\tau\\
\leq&C\int_0^t \|j(\tau)\|_{L^2(\RR^2)}\|B(\tau)\|_{L^2(\RR^2)}\|\nabla u(\tau)\|_{L^2(\RR^2)}\intd\tau\\
\leq&C\|j\|_{L^2_tL^2(\RR^2)}\|B\|_{L^\infty_tL^2(\RR^2)}\|\nabla u\|_{L^2_tL^2(\RR^2)}.
\end{split}
\end{equation}
On the other hand, the high-low frequency technique enables us to infer that
\begin{equation}\label{eq.EB-2}
\begin{split}
&\sum_{q\in\ZZ}\|\dot{\Delta}_qj\|_{L^2_tL^2(\RR^2)}\sum_{|p-q|\leq5}\|\dot{\Delta}_pB\|_{L^\infty_tL^2(\RR^2)}\\
=&\sum_{|q|>N}\|\dot{\Delta}_qj\|_{L^2_tL^2(\RR^2)}\sum_{|p-q|\leq5}\|\dot{\Delta}_pB\|_{L^\infty_tL^2(\RR^2)}\\
&+\sum_{-N\leq q\leq N}\|\dot{\Delta}_qj\|_{L^2_tL^2(\RR^2)}\sum_{|p-q|\leq5}\|\dot{\Delta}_pB\|_{L^\infty_tL^2(\RR^2)}\\
\leq&\Big(\sum_{|q|\geq N}\big\|\dot{\Delta}_qj\big\|^{2}_{L^2_tL^2(\RR^2)}\Big)^{\frac12}\|B\|_{\widetilde{L}^\infty_t\dot{B}^0_{2,2}(\RR^2)}+CN\|B\|_{L^\infty_tL^2(\RR^2)}\|j\|_{L^2_tL^2(\RR^2)},
\end{split}
\end{equation}
where the positive integer $N$ to be fixed later.

Plugging both estimates \eqref{eq.EB-1}, \eqref{eq.EB-2} in \eqref{eq.EB-improve} yields
\begin{equation}\label{eq.EB-I}
\begin{split}
&\|E\|^2_{\widetilde{L}^\infty_t\dot{B}^0_{2,2}(\RR^2)}+\|B\|^2_{\widetilde{L}^\infty_t\dot{B}^0_{2,2}(\RR^2)}+\int_0^t\|j(\tau)\|_{L^2(\RR^2)}^2\intd\tau\\
\leq&\|E_0\|^2_{L^2(\RR^2)}+\|B_0\|_{L^2(\RR^2)}^2+\sup_{p}\|\dot{S}_{p}u\|_{L^2_tL^\infty(\RR^2)}\Big(\sum_{|q|\geq N}\big\|\dot{\Delta}_qj\big\|^{2}_{L^2_tL^2(\RR^2)}\Big)^{\frac12}\|B\|_{\widetilde{L}^\infty_t\dot{B}^0_{2,2}(\RR^2)}\\
&+CN\sup_{p}\|\dot{S}_{p}u\|_{L^2_tL^\infty(\RR^2)}\|B\|_{L^\infty_tL^2(\RR^2)}\|j\|_{L^2_tL^2(\RR^2)}\\
&+C\|j\|_{L^2_tL^2(\RR^2)}\|B\|_{L^\infty_tL^2(\RR^2)}\|\nabla u\|_{L^2_tL^2(\RR^2)}.
\end{split}
\end{equation}
This together with estimate \eqref{eq.est-F-u} entails
\begin{equation}\label{eq.est-F-uEB}
\begin{split}
 &\|(E,B)\|^2_{\widetilde{L}^\infty_t\dot{B}^0_{2,2}(\RR^2)}+\|u(t)\|_{L^2(\RR^2)}^2 +\int_0^t\|j(\tau)\|_{L^2(\RR^2)}^2\intd\tau\\
 &+\int_0^t\|\nabla u(\tau)\|_{L^2(\RR^2)}^2\intd\tau +\lambda_1\sup_{i\in\ZZ}2^{2i}\int_0^t\sum_{k\in\mathbb{Z}^2}\big\|\sqrt{\phi_{i,k}}u(\tau)\big\|^2_{L^2(\RR^2)}\intd\tau\\
 \leq&Ct+C\sup_{p\geq-1}\big\|S_{p}|u|\big\|_{L^2_tL^\infty(\RR^2)}\Big(\sum_{|q|\geq N}\big\|\dot{\Delta}_qj\big\|^{2}_{L^2_tL^2(\RR^2)}\Big)^{\frac12}\|B\|_{\widetilde{L}^\infty_t\dot{B}^0_{2,2}(\RR^2)}\\
 &+CN\sup_{p\geq-1}\big\|S_{p}|u|\big\|_{L^2_tL^\infty(\RR^2)}.
\end{split}
\end{equation}
By resorting to Lemma \ref{local-Bernstein}, we readily have
\begin{equation*}
\begin{split}
\sup_{p\geq-1}\big\|S_{p}|u|\big\|_{L^2_tL^\infty(\RR^2)}\leq&C\sup_{p\geq-1}\Big(2^{2p}\int_0^t\Big(\sup_{k\in\mathbb{Z}^{2}}\big\|\sqrt{\phi_{p,k}}u(\tau)\big\|_{L^2(\RR^2)}\Big)^2\intd\tau\Big)^{\frac12}\\
\leq&C\sup_{p\geq-1}\Big(2^{2p}\int_0^t\sum_{k\in\mathbb{Z}^{2}}\big\|\sqrt{\phi_{p,k}}u(\tau)\big\|^2_{L^2(\RR^2)}\intd\tau\Big)^{\frac12}\\
\leq&C\Big(\sup_{p\geq-1}2^{2p}\int_0^t\sum_{k\in\mathbb{Z}^2}\big\|\sqrt{\phi_{p,k}}u(\tau)\big\|^2_{L^2(\RR^2)}\intd\tau\Big)^{\frac12}.
\end{split}
\end{equation*}
Inserting this estimate into \eqref{eq.est-F-uEB} and using the Cauchy-Schwarz inequality, we immediately have
\begin{equation}\label{eq.est-FV-uEB}
\begin{split}
 &\|E\|^2_{\widetilde{L}^\infty_t\dot{B}^0_{2,2}(\RR^2)}+\|B\|^2_{\widetilde{L}^\infty_t\dot{B}^0_{2,2}(\RR^2)}+\|u(t)\|_{L^2(\RR^2)}^2 +\int_0^t\|j(\tau)\|_{L^2(\RR^2)}^2\intd\tau\\
 &+\int_0^t\|\nabla u(\tau)\|_{L^2(\RR^2)}^2\intd\tau +\lambda_1\sup_{i\in\ZZ}2^{2i}\int_0^t\sum_{k\in\mathbb{Z}^2}\big\|\sqrt{\phi_{i,k}}u(\tau)\big\|^2_{L^2(\RR^2)}\intd\tau\\
 \leq&C+\widetilde{C}\Big(\sum_{|q|\geq N}\big\|\dot{\Delta}_qj\big\|^{2}_{L^2_tL^2(\RR^2)}\Big)^{\frac12}\Big(\sup_{i\in\ZZ}2^{2i}\int_0^t\sum_{k\in\mathbb{Z}^2}\big\|\sqrt{\phi_{i,k}}u(\tau)\big\|^2_{L^2(\RR^2)}\intd\tau\Big)^{\frac12}\|B\|_{L^\infty_t\dot{B}^0_{2,2}(\RR^2)}\\
 &+CN^2+\frac{\lambda_1}{8} \sup_{i\in\ZZ}2^{2i}\int_0^t\sum_{k\in\mathbb{Z}^2}\big\|\sqrt{\phi_{i,k}}u(\tau)\big\|^2_{L^2(\RR^2)}\intd\tau.
\end{split}
\end{equation}
Since
\begin{align*}
\sum_{q\in\ZZ}\int_0^\infty\big\|\dot{\Delta}_qj(t)\big\|^2_{L^2(\RR^2)}\intd t\leq& C\int_0^\infty\|j(t)\|_{L^2(\RR^2)}^2\intd t\\
\leq& C\left(\|u_0\|_{L^2(\RR^2)}^2+\|B_0\|^2_{L^2(\RR^2)}+\|E_0\|^2_{L^2(\RR^2)}\right),
\end{align*}
 we can choose the integer $N $ sufficiently large such that  $$\widetilde{C}\Big(\sum_{|q|\geq N}\big\|\dot{\Delta}_qj\big\|^{2}_{L^2_tL^2(\RR^2)}\Big)^{\frac12}\leq\frac{\lambda_1}{8}.$$ From this, estimate \eqref{eq.est-FV-uEB} reduces to
\begin{equation}\label{est-EB}
  \|E\|^2_{\widetilde{L}^\infty_t\dot{B}^0_{2,2}(\RR^2)}+\|B\|^2_{\widetilde{L}^\infty_t\dot{B}^0_{2,2}(\RR^2)}
+\lambda_1\sup_{i\in\ZZ}2^{2i}\int_0^t\sum_{k\in\mathbb{Z}^2}\big\|\sqrt{\phi_{i,k}}u(\tau)\big\|^2_{L^2(\RR^2)}\intd\tau\leq C.
\end{equation}
So, we complete the proof of this proposition.
 \end{proof}
 Based on the estimates for $u$ established in Proposition \ref{prop-improve}, we further show the global-in-time a priori estimates for smooth solutions in the borderline space.
\begin{proposition}\label{prop-strong}
Let $u_0\in L^2(\RR^2),\,B_0\in L^2_{\rm log}(\RR^2)$ and $E_0\in L^2_{\rm log}(\RR^2)$. Then, for any smooth solution $(u,E,B)$, there exists a positive constant $C$ such that
\begin{equation*}
\big\|E\big\|_{\widetilde{L}^\infty_tL_{\rm log}^2(\RR^2)}^2+\big\|B\big\|_{\widetilde{L}^\infty_tL_{\rm log}^2(\RR^2)}^2+ \int_0^t\big\|j(\tau)\big\|_{L^2_{\rm log}(\RR^2)}^2\intd\tau\leq C\big(t, \|u_0\|_{L^2(\RR^2)}, \|(E_0, B_0)\|_{L^2_{\rm log}(\RR^2)}\big).
\end{equation*}
\end{proposition}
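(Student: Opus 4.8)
The plan is to promote the $\widetilde{L}^\infty_t\dot{B}^0_{2,2}$ bounds of Proposition \ref{prop-improve} to the logarithmic scale by re-running the frequency-localized Maxwell energy identity \eqref{eq.est-E} against the weight $w_q$ defined by $w_q=1$ for $q\leq 0$ and $w_q=q$ for $q>0$, which is exactly the weight built into $L^2_{\rm log}=\dot{H}^{0,0}_1$. First I would multiply \eqref{eq.est-E} by $w_q$ and sum over $q\in\ZZ$. Because the per-frequency estimate already carries $\sup_{\tau\in[0,t]}\|\dot{\Delta}_qE\|_{L^2}^2+\sup_{\tau\in[0,t]}\|\dot{\Delta}_qB\|_{L^2}^2+\int_0^t\|\dot{\Delta}_qj\|_{L^2}^2\intd\tau$ on its left, the weighted sum reproduces precisely $\|E\|_{\widetilde{L}^\infty_tL^2_{\rm log}}^2+\|B\|_{\widetilde{L}^\infty_tL^2_{\rm log}}^2+\int_0^t\|j(\tau)\|_{L^2_{\rm log}}^2\intd\tau$ on the left and $\|E_0\|_{L^2_{\rm log}}^2+\|B_0\|_{L^2_{\rm log}}^2$ from the data, so the whole matter reduces to controlling $\sum_{q\in\ZZ}w_q\big|\int_0^t\int_{\RR^2}\dot{\Delta}_q(u\times B)\cdot\dot{\Delta}_qj\intd x\intd\tau\big|$.

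For this weighted trilinear sum I would use the Bony decomposition $u\times B=\dot{T}_uB+\dot{T}_Bu+\dot{R}(u,B)$, but, in contrast to Lemma \ref{Lem-F-JXB}, keep the time variable inside so as to set up a Grönwall inequality. The leading paraproduct $\dot{T}_uB$ sits at frequency $2^q$ with the velocity entering only through $\|\dot{S}_{p-1}u\|_{L^\infty}\leq C\|u\|_{L^\infty}$; since $w_q\simeq w_p$ for $|p-q|\leq 5$, a Cauchy--Schwarz in $q$ gives $\|\dot{T}_uB\|_{L^2_{\rm log}}\leq C\|u\|_{L^\infty}\|B\|_{L^2_{\rm log}}$, so that after Young this term is bounded by $\tfrac14\int_0^t\|j\|_{L^2_{\rm log}}^2\intd\tau+C\int_0^t\|u(\tau)\|_{L^\infty}^2\|B(\tau)\|_{L^2_{\rm log}}^2\intd\tau$. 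The remainder $\dot{R}(u,B)$ is even more favourable: there high frequencies are transferred down to $q$, so $w_q\leq Cw_p$ and the logarithmic weight can be pushed onto the high-frequency factor of $B$; combining the Bernstein gain $2^q$ from $L^1\to L^2$ with the two-dimensional embedding $\dot{H}^1\hookrightarrow\dot{B}^0_{\infty,2}$ (so that $(2^p\|\dot{\Delta}_pu\|_{L^2})_p\in\ell^2$) and a discrete Young inequality, I would obtain $\|\dot{R}(u,B)\|_{L^2_{\rm log}}\leq C\|\nabla u\|_{L^2}\|B\|_{L^2_{\rm log}}$, hence a contribution $\tfrac18\int_0^t\|j\|_{L^2_{\rm log}}^2\intd\tau+C\int_0^t\|\nabla u(\tau)\|_{L^2}^2\|B(\tau)\|_{L^2_{\rm log}}^2\intd\tau$. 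Both prefactors $\|u\|_{L^\infty}^2$ and $\|\nabla u\|_{L^2}^2$ are integrable on $[0,t]$ by Proposition \ref{prop-improve}, so these two contributions are exactly of Grönwall type.

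The delicate term, and the step I expect to be the main obstacle, is $\dot{T}_Bu$: here the velocity occupies the output frequency $q$, so $w_q\simeq w_p$ places the logarithmic weight on a high frequency of $u$, and the crude bound would require a logarithmically weighted $\dot{H}^1$ control of $u$ that is simply not in the energy budget (we only have $\nabla u\in L^2_tL^2$ and the Morrey--Campanato quantity, neither of which upgrades to a bound on $\sum_{p>0}p\,2^{2p}\|\dot{\Delta}_pu\|_{L^2}^2$). To circumvent this I would split $u=\dot{S}_Nu+(\mathrm{Id}-\dot{S}_N)u$. The low-frequency part activates only weights $w_q\lesssim N$, so its contribution is estimated directly by $C(N)\,\|B\|_{\widetilde{L}^\infty_tL^2}\|\nabla u\|_{L^2_tL^2}\|j\|_{L^2_tL^2}$, a finite constant by Proposition \ref{prop-improve}. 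The high-frequency part would then be treated by adapting the high--low mechanism of \eqref{eq.EB-2}--\eqref{eq.est-FV-uEB}: one pairs it with $\dot{\Delta}_qj$, invokes the bound $\sup_p\|\dot{S}_pu\|_{L^2_tL^\infty}\leq C\big(\sup_i2^{2i}\int_0^t\sum_{k\in\ZZ^2}\|\sqrt{\phi_{i,k}}u\|_{L^2}^2\intd\tau\big)^{1/2}$ supplied by Lemma \ref{local-Bernstein} and Proposition \ref{prop-improve}, and exploits that $\sum_{|q|\geq N}\|\dot{\Delta}_qj\|_{L^2_tL^2}^2$ can be made arbitrarily small by taking $N$ large since $\int_0^t\|j\|_{L^2}^2\intd\tau<\infty$; choosing $N$ so that the coefficient multiplying $\int_0^t\|j\|_{L^2_{\rm log}}^2\intd\tau$ drops below $\tfrac14$ lets me absorb it on the left. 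This balance between the logarithmic weight and the borderline regularity of $u$ is the technical heart of the argument.

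Collecting everything and writing $\Phi(t):=\|E\|_{\widetilde{L}^\infty_tL^2_{\rm log}}^2+\|B\|_{\widetilde{L}^\infty_tL^2_{\rm log}}^2+\int_0^t\|j(\tau)\|_{L^2_{\rm log}}^2\intd\tau$, the above yields an inequality of the form
\begin{equation*}
\Phi(t)\leq C\big(N,t,\|u_0\|_{L^2},\|(E_0,B_0)\|_{L^2_{\rm log}}\big)+\tfrac12\int_0^t\|j(\tau)\|_{L^2_{\rm log}}^2\intd\tau+C\int_0^t\big(\|u(\tau)\|_{L^\infty}^2+\|\nabla u(\tau)\|_{L^2}^2\big)\Phi(\tau)\intd\tau.
\end{equation*}
Absorbing the $j$-integral into $\Phi(t)$ and applying Grönwall's lemma with the weight $\|u\|_{L^\infty}^2+\|\nabla u\|_{L^2}^2\in L^1(0,t)$ (again by Proposition \ref{prop-improve}) then closes the estimate and gives the asserted bound with constant $C(t,\|u_0\|_{L^2},\|(E_0,B_0)\|_{L^2_{\rm log}})$. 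The field $E$ is controlled along the way through the relation $j=E+u\times B$, and the same weighted estimate simultaneously delivers the $\widetilde{L}^\infty_tL^2_{\rm log}$ bound for $E$.
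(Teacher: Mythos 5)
There are two genuine gaps. First, a circularity: at the $\dot{T}_uB$ step and again in your final Gr\"onwall weight you use $\int_0^t\|u(\tau)\|_{L^\infty(\RR^2)}^2\intd\tau<\infty$ ``by Proposition \ref{prop-improve}''. That proposition does \emph{not} give this; it gives only $\sup_{p}\|\dot{S}_{p}u\|_{L^2_tL^\infty}<\infty$ via Lemma \ref{local-Bernstein}, where the supremum over $p$ sits \emph{outside} the time integral, and $\int_0^t\sup_p\|\dot S_pu(\tau)\|^2_{L^\infty}\intd\tau$ cannot be recovered from $\sup_p\int_0^t\|\dot S_pu(\tau)\|^2_{L^\infty}\intd\tau$ (the inequality between $\sup\int$ and $\int\sup$ goes the wrong way). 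Indeed, the $L^2_tL^\infty$ bound on $u$ is exactly the conclusion the paper reaches only \emph{after} Proposition \ref{prop-strong}, in Proposition \ref{coro-L2LI}; assuming it here is circular. The paper avoids this by never removing the frequency truncation: the $\dot T_uB$ contribution is kept as $\|\dot S_{q+5}u\|^2_{L^\infty}\|\dot\Delta_qE\|^2_{L^2}$ and Gr\"onwall is applied \emph{per dyadic block} $q$, so that only the uniform-in-$q$ quantity $\sup_q\int_0^t\|\dot S_{q+5}u(\tau)\|^2_{L^\infty}\intd\tau$ is ever needed.

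Second, your treatment of the term you yourself flag as the heart of the matter, $\dot T_Bu$, does not close. After pairing with $\dot\Delta_qj$ and extracting $\big(\sum_qw_q\|\dot\Delta_qj\|^2_{L^2_tL^2}\big)^{1/2}$ by Cauchy--Schwarz, the companion factor is $\big(\sum_{p>N}w_p\|\dot\Delta_pu\|^2_{L^2_tL^\infty}\big)^{1/2}\lesssim\big(\sum_{p>N}w_p2^{2p}\|\dot\Delta_pu\|^2_{L^2_tL^2}\big)^{1/2}$, which is precisely the logarithmically weighted $\dot H^1$ control of $u$ that you correctly note is not in the energy budget; the smallness of $\sum_{|q|\geq N}\|\dot\Delta_qj\|^2_{L^2_tL^2}$ can shrink a finite constant but cannot render this divergent sum finite, since the weight $w_q\simeq w_p$ has nowhere else to land in this paraproduct. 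The paper resolves this with a different mechanism: it estimates $\int\dot\Delta_q(\dot T_{B}u)\,\dot\Delta_qE\intd x$ by an $L^4$--$L^4$--$L^2$ H\"older inequality and proves the summation lemma $\sup_{q'\geq1}\sqrt{q'}\,2^{-q'/2}\|\dot S_{q'-1}B\|_{L^4(\RR^2)}\leq C\|B\|_{L^2_{\rm log}(\RR^2)}$, which transfers the logarithmic weight onto the \emph{low}-frequency factor $B$ (the Bernstein cost $2^{i/2}$ of each block $\dot\Delta_iB$ being absorbed by the exponential tail $2^{-(q'-i)/2}$), while the high-frequency $u$ is paid for by $\|u\|_{\dot B^{1/2}_{4,2}}\lesssim\|\nabla u\|_{L^2}$; this produces the integrable Gr\"onwall weight $\|\nabla u\|^2_{L^2}$ and a factor $c_q^2/q$ that survives multiplication by $q$. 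Note also that the paper works with the $E$-dissipation identity (pairing with $\dot\Delta_qE$, not $\dot\Delta_qj$) and recovers $\int_0^t\|j\|^2_{L^2_{\rm log}}\intd\tau$ only afterwards from $j=E+u\times B$ by a separate product estimate; this ordering is what lets all absorptions be done against $\int_0^t\|\dot\Delta_qE\|^2_{L^2}\intd\tau$ without any smallness requirement.
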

\begin{proof}
First of all, the same argument as in proving \eqref{eq.est-E} provides
\begin{equation*}
\begin{split}
&\frac12\dtd\big\|\dot{\Delta}_qE(t)\big\|_{L^2(\RR^2)}^2+\frac12\dtd\big\|\dot{\Delta}_qB(t)\big\|_{L^2(\RR^2)}^2+ \big\|\dot{\Delta}_qE(t)\big\|_{L^2(\RR^2)}^2\\
=&-\int_{\RR^2}\dot{\Delta}_q(u\times B)\cdot\dot{\Delta}_qE\intd x.
\end{split}
\end{equation*}
Thanks to the Bony-paraproduct decomposition, the integral in the right side of the above equality can be written as
\begin{equation*}
\begin{split}
\int_{\RR^2}\dot{\Delta}_q(u^{m} B^{\ell})\dot{\Delta}_qE^{i}\intd x=&\int_{\RR^2}\dot{\Delta}_q(\dot{T}_{u^{\ell}}B^{m})\dot{\Delta}_qE^{i}\intd x+\int_{\RR^2}\dot{\Delta}_q(\dot{T}_{B^m}u^{\ell})\dot{\Delta}_qE^{i}\intd x\\
&+\int_{\RR^2}\dot{\Delta}_q\dot{R}(u^{\ell}, B^{m})\dot{\Delta}_qE^{i}\intd x.
\end{split}
\end{equation*}
By the H\"older inequality, we find that for $q>5,$
\begin{equation}\label{eq.TB}
\begin{split}
\int_{\RR^2}\dot{\Delta}_q(\dot{T}_{B^m} u^{\ell})\dot{\Delta}_qE^{i}\intd x
\leq&\sum_{|q'-q|\leq5}\big\|\dot{S}_{q'-1}B\big\|_{L^4(\RR^2)}\big\|\dot{\Delta}_{q'} u\big\|_{L^4(\RR^2)}\big\|\dot{\Delta}_q E\big\|_{L^2(\RR^2)}\\
\leq&\frac{Cc_q}{\sqrt{q}}\sup_{q'\geq1}\sqrt{q'}2^{-\frac{q'}{2}}\big\|\dot{S}_{q'-1}B\big\|_{L^4(\RR^2)}\|u\|_{\dot{B}^{\frac12}_{4,2}(\RR^2)}\big\|\dot{\Delta}_q E\big\|_{L^2(\RR^2)},
\end{split}
\end{equation}
where $c_q\in\ell^2.$

Note that
\begin{align*}
&\sup_{q'\geq1}\sqrt{q'}2^{-\frac{q'}{2}}\big\|\dot{S}_{q'-1}B\big\|_{L^4(\RR^2)}\\
\leq&\sup_{q'\geq1}\sqrt{q'}2^{-\frac{q'}{2}}\sum_{1\leq i\leq  q'-2}\big\|\dot{\Delta}_{i}B\big\|_{L^4(\RR^2)}+\sup_{q'\geq1}\sqrt{q'}2^{-\frac{q'}{2}}\big\|\dot{S}_{0}B\big\|_{L^4(\RR^2)}\\
\leq&\sum_{1\leq i\leq k-2}\sqrt{\frac{k}{i}}2^{-\frac{k-i}{2}}\sqrt{i}2^{-\frac{i}{2}}\big\|\dot{\Delta}_{i}B\big\|_{L^4(\RR^2)}+C\|B\|_{L^2(\RR^2)}\\
\leq&\sum_{1\leq i\leq k-2}\sqrt{k-i}2^{-\frac{k-i}{2}}\sqrt{i}2^{-\frac{i}{2}}\big\|\dot{\Delta}_{i}B\big\|_{L^4(\RR^2)}\\
&+\sum_{1\leq i\leq k-2}2^{-\frac{k-i}{2}}\sqrt{i}2^{-\frac{i}{2}}\big\|\dot{\Delta}_{i}B\big\|_{L^4(\RR^2)}+C\|B\|_{L^2(\RR^2)}\\
\leq&\sup_{i\geq1}\sqrt{i}2^{-\frac{i}{2}}\big\|\dot{\Delta}_{i}B\big\|_{L^4(\RR^2)}+C\|B\|_{L^2(\RR^2)}\leq C\|B\|_{L^2_{\rm log}(\RR^2)}.
\end{align*}
Plugging this estimate in \eqref{eq.TB}  and applying the Cauchy-Schwarz inequality to the resulting estimate, we get
\begin{equation}\label{T1}
\begin{split}
\int_{\RR^2}\dot{\Delta}_q(\dot{T}_{B^m} u^{\ell})\dot{\Delta}_qE^{i}\intd x
\leq&\frac{Cc_q}{\sqrt{q}}\|B\|_{L^2_{\rm log}(\RR^2)}\|u\|_{\dot{B}^{\frac12}_{4,2}(\RR^2)}\big\|\dot{\Delta}_q E\big\|_{L^2(\RR^2)}\\
\leq&\frac{Cc_q}{\sqrt{q}}\|B\|_{L^2_{\rm log}(\RR^2)}\|\nabla u\|_{L^2(\RR^2)}\big\|\dot{\Delta}_q E\big\|_{L^2(\RR^2)}\\
\leq&\frac{Cc^2_q}{q}\|\nabla u\|^2_{L^2(\RR^2)}\|B\|^2_{L^2_{\rm log}(\RR^2)}+\frac14\big\|\dot{\Delta}_q E\big\|^{2}_{L^2(\RR^2)}.
\end{split}
\end{equation}
As for the remainder term, it can be bounded as follows:
\begin{equation}\label{eq.R(u,B)}
\begin{split}
\int_{\RR^2}\dot{\Delta}_q\dot{R}(u^{\ell}, B^{m})\dot{\Delta}_qE^{i}\intd x=&\sum_{q'\geq q-5}\int_{\RR^2}\dot{\Delta}_q\Big(\widetilde{\dot{\Delta}}_{q'}u^{\ell}\dot{\Delta}_{q'}B^{m}\Big)\dot{\Delta}_qE^{i}\intd x\\
\leq&C\sum_{q'\geq q-5}2^{q}\big\|\widetilde{\dot{\Delta}}_{q'}u\big\|_{L^2(\RR^2)}\big\|\dot{\Delta}_{q'}B\big\|_{L^2(\RR^2)}\big\|\dot{\Delta}_qE\big\|_{L^2(\RR^2)}.
\end{split}
\end{equation}
A simple calculation yields that for $q>5,$
\begin{equation*}
\begin{split}
&\sum_{q'\geq q-5}2^{q}\big\|\widetilde{\dot{\Delta}}_{q'}u\big\|_{L^2(\RR^2)}\big\|\dot{\Delta}_{q'}B\big\|_{L^2(\RR^2)}\\
=&\frac{1}{\sqrt{q}}\sum_{q'\geq q-5}\frac{\sqrt{q}}{\sqrt{q'}}2^{q-q'}2^{q'}\big\|\widetilde{\dot{\Delta}}_{q'}u\big\|_{L^2(\RR^2)}\sqrt{q'}\big\|\dot{\Delta}_{q'}B\big\|_{L^2(\RR^2)}\\
\leq&\frac{1}{\sqrt{q}}\sum_{q'\geq q-5}{\sqrt{|q-q'|}}2^{q-q'}2^{q'}\big\|\widetilde{\dot{\Delta}}_{q'}u\big\|_{L^2(\RR^2)}\sqrt{q'}\big\|\dot{\Delta}_{q'}B\big\|_{L^2(\RR^2)}\\
&+\frac{1}{\sqrt{q}}\sum_{q'\geq q-5}2^{q-q'}2^{q'}\big\|\widetilde{\dot{\Delta}}_{q'}u\big\|_{L^2(\RR^2)}\sqrt{q'}\big\|\dot{\Delta}_{q'}B\big\|_{L^2(\RR^2)}.
\end{split}
\end{equation*}
Moreover, by the discrete Young inequality, one has that for $q>5$,
\begin{equation*}
\begin{split}
\sum_{q'\geq q-5}2^{q}\big\|\widetilde{\dot{\Delta}}_{q'}u\big\|_{L^2(\RR^2)}\big\|\dot{\Delta}_{q'}B\big\|_{L^2(\RR^2)}=&\sum_{q'\geq q-5}2^{q-q'}2^{q'}\big\|\widetilde{\dot{\Delta}}_{q'}u\big\|_{L^2(\RR^2)}\big\|\dot{\Delta}_{q'}B\big\|_{L^2(\RR^2)}\\
\leq& \frac{Cc_q}{\sqrt{q}}\|\nabla u\|_{L^2(\RR^2)}\|B\|_{L^2_{\rm log}(\RR^2)},
\end{split}
\end{equation*}
where $c_q\in\ell^2.$

Inserting this estimate into \eqref{eq.R(u,B)} leads to
\begin{equation}\label{T2}
\begin{split}
\int_{\RR^2}\dot{\Delta}_q\dot{R}(u^{\ell}, B^{m})\dot{\Delta}_qE^i\intd x\leq& \frac{Cc_q}{\sqrt{q}}\|\nabla u\|_{L^2(\RR^2)}\|B\|_{L^2_{\rm log}(\RR^2)}\big\|\dot{\Delta}_qE\big\|_{L^2(\RR^2)}\\
\leq&\frac{Cc^2_q}{q}\|\nabla u\|^2_{L^2(\RR^2)}\|B\|^2_{L^2_{\rm log}(\RR^2)}+\frac14\big\|\dot{\Delta}_q E\big\|^{2}_{L^2(\RR^2)}.
\end{split}
\end{equation}
Lastly, we tackle with the para-product term $\int_{\RR^2}\dot{\Delta}_q(\dot{T}_{u^{\ell}} B^{m}) \dot{\Delta}_qE^{i}\intd x$. We see that for $q>5,$
\begin{equation*}
\begin{split}
\int_{\RR^2}\dot{\Delta}_q(\dot{T}_{u^{\ell}} B^{m}) \dot{\Delta}_qE^{i}\intd x=&\sum_{|k-q|\leq 5}\int_{\RR^2}\dot{\Delta}_q\big(\dot{S}_{k-1}u^{\ell}\dot{\Delta}_kB^{m}\big)\dot{\Delta}_qE^{i}\intd x\\
\leq&\sum_{|k-q|\leq 5}\big\|\dot{S}_{k-1}u\big\|_{L^\infty(\RR^2)}\big\|\dot{\Delta}_kB\big\|_{L^2(\RR^2)}\big\|\dot{\Delta}_qE\big\|_{L^2(\RR^2)}\\
\leq&C\big\|\dot{S}_{q+5}u\big\|_{L^\infty(\RR^2)}\sum_{|k-q|\leq 5}\big\|\dot{\Delta}_kB\big\|_{L^2(\RR^2)}\big\|\dot{\Delta}_qE\big\|_{L^2(\RR^2)}\\
\leq&\frac{Cc_q}{\sqrt{q}}\big\|\dot{S}_{q+5}u\big\|_{L^\infty(\RR^2)}\|B\|_{L^2_{\rm log}(\RR^2)}\big\|\dot{\Delta}_qE\big\|_{L^2(\RR^2)}.
\end{split}
\end{equation*}
By the Cauchy-Schwarz inequality, we readily have that for $q>5,$
\begin{equation}\label{T3}
\int_{\RR^2}\dot{\Delta}_q(u\times B)\cdot\dot{\Delta}_qE\intd x\leq\big\|\dot{S}_{q+5}u\big\|^2_{L^\infty(\RR^2)}\big\|\dot{\Delta}_qE\big\|^2_{L^2(\RR^2)}+\frac{Cc^2_q}{q}\|B\|^2_{L^2_{\rm log}(\RR^2)}.
\end{equation}
Collecting all these estimates \eqref{T1}, \eqref{T2}, \eqref{T3}  yields that for $q>5,$
\begin{equation*}
\begin{split}
&\dtd\big\|\dot{\Delta}_qE(t)\big\|_{L^2(\RR^2)}^2+\dtd\big\|\dot{\Delta}_qB(t)\big\|_{L^2(\RR^2)}^2+ \big\|\dot{\Delta}_qE(t)\big\|_{L^2(\RR^2)}^2\\
\leq&\big\|\dot{S}_{q+5}u\big\|^2_{L^\infty(\RR^2)}\big\|\dot{\Delta}_qE\big\|^2_{L^2(\RR^2)}+\frac{Cc^2_q}{q}\|B\|^2_{L^2_{\rm log}(\RR^2)}+\frac{Cc^2_q}{q}\|\nabla u\|^2_{L^2(\RR^2)}\|B\|^2_{L^2_{\rm log}(\RR^2)}.
\end{split}
\end{equation*}
In view of the Gronwall inequality, we immediately have that for $q>5,$
\begin{equation}\label{eq.Gronwall-1}
\begin{split}
&\big\|\dot{\Delta}_qE(t)\big\|_{L^2(\RR^2)}^2+\big\|\dot{\Delta}_qB(t)\big\|_{L^2(\RR^2)}^2+ \int_0^t\big\|\dot{\Delta}_qE(\tau)\big\|_{L^2(\RR^2)}^2\intd\tau\\
\leq&e^{\int_0^t\|\dot{S}_{q+5}u(\tau)\|^2_{L^\infty(\RR^2)}\intd\tau}\Big(\big\|\dot{\Delta}_q(E_0,B_0)\big\|^2_{L^2(\RR^2)}+C\int_0^t\frac{c^2_q}{q}\|B(\tau)\|^2_{L^2_{\rm log}(\RR^2)}\intd\tau\\
&+C\int_0^t\frac{c^2_q}{q}\|\nabla u(\tau)\|^2_{L^2(\RR^2)}\|B(\tau)\|^2_{L^2_{\rm log}(\RR^2)}\intd\tau\Big).
\end{split}
\end{equation}
Note that
\begin{equation*}
\begin{split}
\int_0^t\|\dot{S}_{q+5}u(\tau)\big\|^2_{L^\infty(\RR^2)}\intd\tau\leq& C2^{2q}\int_0^t\sup_{k\in\ZZ^2}\big\|\sqrt{\phi_{q,k}}u(\tau)\big\|^2_{L^2(\RR^2)}\intd\tau\\
\leq& C\sup_{i\in\ZZ}2^{2i}\int_0^t\sup_{k\in\ZZ^2}\big\|\sqrt{\phi_{i,k}}u(\tau)\big\|^2_{L^2((\RR^2))}\intd\tau.
\end{split}
\end{equation*}
Multiplying \eqref{eq.Gronwall-1} by $q$ and summing the resulting inequality over $q>5$, we get by using the $L^2$-estimate \eqref{eq.L2estimate} that
\begin{align*}
&\sum_{q>5}q\big\|\dot{\Delta}_{q}E\big\|_{\widetilde{L}^\infty_tL^2(\RR^2)}^2+\sum_{q>5}q\big\|\dot{\Delta}_{q}B\big\|_{\widetilde{L}^\infty_tL^2(\RR^2)}^2+ \sum_{q>5}q \int_0^t\big\|\dot{\Delta}_{q}E(\tau)\big\|_{L^2(\RR^2)}^2\intd\tau\\&
+\|u(t)\|^{2}_{L^2(\RR^2)}+\|E(t)\|^{2}_{L^2(\RR^2)}+\|B(t)\|^{2}_{L^2(\RR^2)}+\|E(t)\|^{2}_{L^2(\RR^2)}\\
\leq&C\|u_0\|_{L^2(\RR^2)}^2+C\big\|(E_0,B_0)\big\|^2_{L^2_{\rm log}(\RR^2)}+C\int_0^t\|B(\tau)\|^2_{L^2_{\rm log}(\RR^2)}\intd\tau\\
&+C\int_0^t\|\nabla u(\tau)\|^2_{L^2(\RR^2)}\|B(\tau)\|^2_{L^2_{\rm log}(\RR^2)}\intd\tau.
\end{align*}
Since
\begin{align*}
 &\big\|E\big\|_{\widetilde{L}^\infty_tL_{\rm log}^2(\RR^2)}^2+ \big\|B\big\|_{\widetilde{L}^\infty_tL_{\rm log}^2(\RR^2)}^2+  \int_0^t\big\|E(\tau)\big\|_{L^2_{\rm log}(\RR^2)}^2\intd\tau\\
 \leq&\sum_{q>5}q\big\|\dot{\Delta}_{q}E\big\|_{\widetilde{L}^\infty_tL^2(\RR^2)}^2+\sum_{q>5}q\big\|\dot{\Delta}_{q}B\big\|_{\widetilde{L}^\infty_tL^2(\RR^2)}^2+ \sum_{q>5}q \int_0^t\big\|\dot{\Delta}_{q}E(\tau)\big\|_{L^2(\RR^2)}^2\intd\tau\\&
+C\|u(t)\|^{2}_{L^2(\RR^2)}+C\|E(t)\|^{2}_{L^2(\RR^2)}+C\|B(t)\|^{2}_{L^2(\RR^2)}+C\|E(t)\|^{2}_{L^2(\RR^2)},
\end{align*}
we have
\begin{align*}
 &\big\|E\big\|_{\widetilde{L}^\infty_tL_{\rm log}^2(\RR^2)}^2+ \big\|B\big\|_{\widetilde{L}^\infty_tL_{\rm log}^2(\RR^2)}^2+  \int_0^t\big\|E(\tau)\big\|_{L^2_{\rm log}(\RR^2)}^2\intd\tau\\
 \leq&C +C\int_0^t\|B(\tau)\|^2_{L^2_{\rm log}(\RR^2)}\intd\tau
+C\int_0^t\|\nabla u(\tau)\|^2_{L^2(\RR^2)}\|B(\tau)\|^2_{L^2_{\rm log}(\RR^2)}\intd\tau.
 \end{align*}
By the Gronwall inequality again, we eventually get that
\begin{equation}\label{est-EBE}
\begin{split}
&\big\|E\big\|_{\widetilde{L}^\infty_tL_{\rm log}^2(\RR^2)}^2+\big\|B\big\|_{\widetilde{L}^\infty_tL_{\rm log}^2(\RR^2)}^2+ \int_0^t\big\|E(\tau)\big\|_{L^2_{\rm log}(\RR^2)}^2\intd\tau\\
\leq& C e^{Ct+C\int_0^t\|\nabla u(\tau)\|^2_{L^2(\RR^2)}\,\mathrm{d}\tau}.
\end{split}
\end{equation}
Based on this regularity, we turn to show that
 $\int_0^t\|j(\tau)\|^{2}_{L^2_{\rm log}(\RR^2)}\intd\tau<\infty.$ Since $j=E+u\times B$ and $\int_0^t\|E(\tau)\|^{2}_{L^2_{\rm log}(\RR^2)}\intd\tau<\infty$, we just need to show that $\int_0^t\|(u\times B)(\tau)\|^{2}_{L^2_{\rm log}(\RR^2)}\intd\tau<\infty.$
 Thanks to the Bony paraproduct decomposition, one writes
 \begin{align*}
 u^{\ell}B^{i}=\dot{T}_{u_{\ell}}B^{i}+\dot{T}_{B^i}u^{\ell}+\dot{R}(u^{\ell},B^{i}).
 \end{align*}
According to the definition of $L^2_{\rm log}(\RR^2)$, we have
 \begin{align*}
&\int_0^t\|\dot{T}_{u^{\ell}}B^{i}(\tau)\|^{2}_{L^2_{\rm log}(\RR^2)}\intd\tau\\
\leq&C\int_0^t\sum_{q\leq5}\big\|\dot{\Delta}_{q}(\dot{T}_{u^{\ell}}B^{i})\big\|^{2}_{L^2(\RR^2)}\intd\tau
+C\int_0^t\sum_{q>5}q\big\|\dot{\Delta}_{q}(\dot{T}_{u^{\ell}}B^{i})\big\|^{2}_{L^2(\RR^2)}\intd\tau\\
:=&I+II.
 \end{align*}
 By the H\"older inequality, we immediately have
  \begin{align*}
 I\leq &C\sum_{q\leq5}\sum_{|p-q|\leq5}\int_0^t\big\| \dot{S}_{p-1} u(\tau)\big\|^{2}_{L^\infty(\RR^2)}\|\dot{\Delta}_pB(\tau)\|^{2}_{L^2(\RR^2)}\intd\tau\\
  \leq&C\sup_{p\in\ZZ}\int_0^t\big\| \dot{S}_{p-1} u(\tau)\big\|^{2}_{L^\infty(\RR^2)}\intd\tau\sum_{p\in\ZZ}\|\dot{\Delta}_pB\|^{2}_{L^\infty_tL^2(\RR^2)}\\
  \leq&C\sup_{i\in\ZZ}2^{2i}\int_0^t\sup_{k\in\ZZ^2}\big\|\sqrt{\phi_{i,k}}u(\tau)\big\|^2_{L^2((\RR^2))}\intd\tau\|B\|_{\widetilde{L}^\infty_tL^2(\RR^2)}^{2}.
  \end{align*}
  Similarly, we have
  \begin{align*}
 II\leq &C\sum_{q>5}\sum_{|p-q|\leq5}q\int_0^t\big\| \dot{S}_{p-1} u(\tau)\big\|^{2}_{L^\infty(\RR^2)}\|\dot{\Delta}_pB(\tau)\|^{2}_{L^2(\RR^2)}\intd\tau\\
  \leq&C\sup_{p\in\ZZ}\int_0^t\big\| \dot{S}_{p-1} u(\tau)\big\|^{2}_{L^\infty(\RR^2)}\intd\tau\sum_{p\geq1}p\|\dot{\Delta}_pB\|^{2}_{L^\infty_tL^2(\RR^2)}\\
  \leq&C\sup_{i\in\ZZ}2^{2i}\int_0^t\sup_{k\in\ZZ^2}\big\|\sqrt{\phi_{i,k}}u(\tau)\big\|^2_{L^2((\RR^2))}\intd\tau\|B\|_{\widetilde{L}^\infty_tL_{\rm log}^2(\RR^2)}^{2}.
  \end{align*}
 From both estimates for $I$ and $II$, we have
 \[\int_0^t\|\dot{T}_{u^{\ell}}B^{i}(\tau)\|^{2}_{L^2_{\rm log}(\RR^2)}\intd\tau\leq C\sup_{i\in\ZZ}2^{2i}\int_0^t\sup_{k\in\ZZ^2}\big\|\sqrt{\phi_{i,k}}u(\tau)\big\|^2_{L^2((\RR^2))}\intd\tau\|B\|_{\widetilde{L}^\infty_tL_{\rm log}^2(\RR^2)}^{2}.\]
  Next, according to the definition of $L^2_{\rm log}(\RR^2)$, we see that
 \begin{align*}
&\int_0^t\|\dot{T}_{B^{i}}u^{\ell}(\tau)\|^{2}_{L^2_{\rm log}(\RR^2)}\intd\tau\\
\leq&C\int_0^t\sum_{q\leq5}\big\|\dot{\Delta}_{q}(\dot{T}_{B^{i}}u^{\ell})\big\|^{2}_{L^2(\RR^2)}\intd\tau
+C\int_0^t\sum_{q>5}q\big\|\dot{\Delta}_{q}(\dot{T}_{B^{i}}u^{\ell})\big\|^{2}_{L^2(\RR^2)}\intd\tau\\
:=&J_1+J_2.
 \end{align*}
 By the H\"older inequality and the Bernstein inequality, one gets
  \begin{align*}
J_1\leq &C\sum_{q\leq5}\sum_{|p-q|\leq5}\int_0^t\big\| \dot{S}_{p-1} B(\tau)\big\|^{2}_{L^2(\RR^2)}\|\dot{\Delta}_pu(\tau)\|^{2}_{L^\infty(\RR^2)}\intd\tau\\
  \leq&C\|B\|_{\widetilde{L}^\infty_tL^2(\RR^2)}^{2}\sum_{p\in\ZZ}\|\dot{\Delta}_pu\|^{2}_{L^2_tL^\infty(\RR^2)}
  \leq C\|\nabla u\|^{2}_{L^2_tL^2(\RR^2)}\|B\|_{\widetilde{L}^\infty_tL^2(\RR^2)}^{2}.
  \end{align*}
  Similarly, we see that
   \begin{align*}
J_2\leq &C\sum_{q>5}q\sum_{|p-q|\leq5}\int_0^t\big\| \dot{S}_{p-1} B(\tau)\big\|^{2}_{L^\infty(\RR^2)}\|\dot{\Delta}_pu(\tau)\|^{2}_{L^2(\RR^2)}\intd\tau\\
\leq &C\sum_{q>5}\sum_{|p-q|\leq5}\frac{q}{p}\int_0^tp2^{-2p}\big\| \dot{S}_{p-1} B(\tau)\big\|^{2}_{L^\infty(\RR^2)}2^{2p}\|\dot{\Delta}_pu(\tau)\|^{2}_{L^2(\RR^2)}\intd\tau\\
\leq &C\sup_{p\geq1}p2^{-2p}\big\| \dot{S}_{p-1} B(\tau)\big\|^{2}_{L_t^\infty L^\infty(\RR^2)}\sum_{p\geq1}2^{2p}\|\dot{\Delta}_pu \|^{2}_{L^2_tL^2(\RR^2)} \\
  \leq&C\|B\|_{\widetilde{L}^\infty_tL^2_{\rm log}(\RR^2)}^{2}\sum_{p\in\ZZ}2^{2p}\|\dot{\Delta}_pu\|^{2}_{L^2_tL^2(\RR^2)}
  \leq C\|\nabla u\|^{2}_{L^2_tL^2(\RR^2)}\|B\|_{\widetilde{L}^\infty_tL^2_{\rm log}(\RR^2)}^{2}.
  \end{align*}
  As for the remainder term, we find that
   \begin{align*}
&\int_0^t\|\dot{R}(u^{\ell},B^{i})(\tau)\|^{2}_{L^2_{\rm log}(\RR^2)}\intd\tau\\
\leq&C\int_0^t\sum_{q\leq5}\big\|\dot{\Delta}_{q}(\dot{R}(u^{\ell},B^{i}))\big\|^{2}_{L^2(\RR^2)}\intd\tau
+C\int_0^t\sum_{q>6}q\big\|\dot{\Delta}_{q}(\dot{R}(u^{\ell},B^{i}))\big\|^{2}_{L^2(\RR^2)}\intd\tau\\
:=&K_1+K_2.
 \end{align*}
 By the H\"older inequality and the discrete Young inequality, one has
 \begin{align*}
 K_1\leq&C\sum_{q\leq5}\sum_{p\geq q-5}\int_0^t2^{2q}\big\|\widetilde{\dot{\Delta}}_{p}u(\tau)\big\|^{2}_{L^2(\RR^2)}\big\|\dot{\Delta}_{p}B(\tau)\big\|^{2}_{L^2(\RR^2)}\intd\tau\\
 \leq&C\sum_{q\leq5}\sum_{p\geq q-5}\int_0^t2^{2(q-p)}2^{2p}\big\|\widetilde{\dot{\Delta}}_{q}u(\tau)\big\|^{2}_{L^2(\RR^2)}\big\|\dot{\Delta}_{q}B(\tau)\big\|^{2}_{L^2(\RR^2)}\intd\tau\\
 \leq&C\|B\|_{\widetilde{L}^\infty_tL^2(\RR^2)}^{2}\sum_{p\in\ZZ}2^{2p}\|\widetilde{\dot{\Delta}}_pu\|^{2}_{L^2_tL^2(\RR^2)}
  \leq C\|\nabla u\|^{2}_{L^2_tL^2(\RR^2)}\|B\|_{\widetilde{L}^\infty_tL^2(\RR^2)}^{2}.
 \end{align*}
In the similar fashion, we can obtain
\begin{align*}
 K_2\leq&C\sum_{q>5}q\sum_{p\geq q-5}\int_0^t2^{2p}\big\|\widetilde{\dot{\Delta}}_{p}u(\tau)\big\|^{2}_{L^2(\RR^2)}\big\|\dot{\Delta}_{p}B(\tau)\big\|^{2}_{L^2(\RR^2)}\intd\tau\\
 \leq&C\sum_{q\geq1}\sum_{p\geq q-5}\int_0^t\frac{q}{p}2^{2(q-p)}2^{2p}\big\|\widetilde{\dot{\Delta}}_{p}u(\tau)\big\|^{2}_{L^2(\RR^2)}p\big\|\dot{\Delta}_{p}B(\tau)\big\|^{2}_{L^2(\RR^2)}\intd\tau\\
 \leq&C\|B\|_{\widetilde{L}^\infty_tL^2_{\rm log}(\RR^2)}^{2}\sum_{p\geq1}2^{2p}\|\widetilde{\dot{\Delta}}_pu\|^{2}_{L^2_tL^2(\RR^2)}
  \leq C\|\nabla u\|^{2}_{L^2_tL^2(\RR^2)}\|B\|_{\widetilde{L}^\infty_tL^2_{\rm log}(\RR^2)}^{2}.
\end{align*}
Therefore, we finally get
\begin{align*}
&\int_0^t\|(u\times B)(\tau)\|^{2}_{L^2_{\rm log}(\RR^2)}\intd\tau\\
\leq&C\Big(\sup_{i\in\ZZ}2^{2i}\int_0^t\sup_{k\in\ZZ^2}\big\|\sqrt{\phi_{i,k}}u(\tau)\big\|^2_{L^2((\RR^2))}+\|\nabla u\|^{2}_{L^2_tL^2(\RR^2)}\Big)\|B\|_{\widetilde{L}^\infty_tL^2_{\rm log}(\RR^2)}^{2}.
\end{align*}
By Proposition \ref{prop-improve} and estimate \eqref{est-EBE}, we know that $\int_0^t\|(u\times B)(\tau)\|^{2}_{L^2_{\rm log}(\RR^2)}\intd\tau<\infty$. So, we finish the proof of the proposition.
\end{proof}
Based on this regularity in the borderline space, we can show the global-in-time bound for $\int_0^t\|u(\tau)\|_{L^\infty(\RR^2)}^2\intd\tau$, which plays an important role in the proof of some known results such as \cite{PSN}.
\begin{proposition}\label{coro-L2LI}
Let $u_0\in L^2(\RR^2),\,B_0\in L^2_{\rm log}(\RR^2)$ and $E_0\in L^2_{\rm log}(\RR^2)$. Then, for any smooth solution $(u,E,B)$, there holds that
\begin{equation}\label{eq.u-L-infty}
\int_0^t\left\|\widehat{u}(\tau)\right\|_{L^1(\RR^2)}^2\,\mathrm{d}\tau\leq C\big(t, \|u_0\|_{L^2(\RR^2)}, \|(E_0, B_0)\|_{L^2_{\rm log}(\RR^2)}\big).
\end{equation}
\end{proposition}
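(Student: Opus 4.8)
The plan is to deduce the Fourier-$L^1$ bound from the mild (Duhamel) formulation of the velocity equation together with the \emph{a priori} bounds already established in Propositions \ref{prop-improve} and \ref{prop-strong}. The point of departure is the embedding $\|u\|_{L^\infty(\RR^2)}\le\|\widehat u\|_{L^1(\RR^2)}$, so that \eqref{eq.u-L-infty} is precisely the quantity needed to close the $L^2_tL^\infty$ control of the velocity. I would work in the inhomogeneous Fourier-Herz scale $FB^0_{1,1}$, for which $\|\widehat u\|_{L^1}=\sum_{q\ge-1}\|\widehat{\Delta_q u}\|_{L^1}$ and, crucially, products behave well because $\widehat{fg}=\widehat f\ast\widehat g$.

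First I would write the Leray-projected momentum equation as $u(t)=e^{t\Delta}u_0+\int_0^t e^{(t-s)\Delta}\mathbb{P}\big(j\times B-\Div(u\otimes u)\big)(s)\intd s$, using $\Div u=0$ to put the convection term in divergence form. Applying $\dot{\Delta}_q$, taking the Fourier transform and the $L^1_\xi$ norm, the heat multiplier contributes the pointwise factor $e^{-c(t-s)2^{2q}}$; a Young convolution inequality in the time variable then produces a gain of $2^{-2q}$ on the Duhamel integral. This reduces the matter to three contributions, which I estimate separately.

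For the linear part I would avoid the triangle inequality in $q$ (which loses the frequency orthogonality and in fact diverges) and instead compute directly
\[\int_0^\infty\big\|\widehat{e^{\tau\Delta}u_0}\big\|_{L^1(\RR^2)}^2\intd\tau=\int_{\RR^2}\int_{\RR^2}\frac{|\widehat{u_0}(\xi)|\,|\widehat{u_0}(\eta)|}{|\xi|^2+|\eta|^2}\intd\xi\intd\eta\le C\|u_0\|_{L^2(\RR^2)}^2,\]
the last bound following from the scaling invariance of the kernel and a Schur-type argument. For the Lorentz forcing $j\times B$ I would use that $\mathbb{P}$ is an order-zero Fourier multiplier (so the net smoothing is $2^{-2q}$) and estimate the frequency blocks of the product through the regularity $j\in L^2_tL^2_{\rm log}$ and $B\in\widetilde{L}^\infty_tL^2_{\rm log}$ coming from Proposition \ref{prop-strong}; the logarithmic weight in $L^2_{\rm log}$ is exactly what converts the borderline $\ell^2$ summation in the frequency parameter into the $\ell^1$ summation demanded by $FB^0_{1,1}$. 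For the convection term $\Div(u\otimes u)$, whose derivative leaves the net factor $2^{-q}$, I would split $u\otimes u$ by Bony's decomposition and control the low-frequency factor in $L^2_tL^\infty$ by $\sup_{q}\|S_q u\|_{L^2_tL^\infty}\le C\big(\sup_i2^{2i}\int_0^t\sum_k\|\sqrt{\phi_{i,k}}u\|_{L^2}^2\big)^{1/2}$ via Lemma \ref{local-Bernstein} and Proposition \ref{prop-improve}, the remaining factor being absorbed by the dissipation $\nabla u\in L^2_tL^2$.

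The hard part is the summation over the frequency index: at the level of the $L^2$ energy the natural control is $\ell^2$ (i.e. $\dot{B}^{\,\cdot}_{2,2}$), whereas $\widehat u\in L^1$ requires $\ell^1$ (i.e. $FB^0_{1,1}\sim B^0_{\infty,1}$), and the gap between the two is exactly logarithmic. The whole mechanism is that this logarithmic deficit is supplied, for the electromagnetic interaction, by the $L^2_{\rm log}$ regularity of $(E,B,j)$, and, for the purely hydrodynamic self-interaction, by the new Morrey--Campanato type quantity $\sup_i2^{2i}\int_0^t\sum_k\|\sqrt{\phi_{i,k}}u\|_{L^2}^2$, which is ``very close to'' $L^2_tL^\infty$ and thereby furnishes the uniform-in-$q$ control $\sup_q\|S_q u\|_{L^2_tL^\infty}$ that closes the borderline sum. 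It remains to keep track that all constants depend only on $t$ and the data norms, and to treat the low-frequency end where the $2^{-q}$ gain is unfavourable, which the inhomogeneous cut-off $q\ge-1$ resolves.
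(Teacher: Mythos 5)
Your overall architecture (Duhamel formula, Fourier transform, $L^1_\xi$ norm, three separate contributions) is the same as the paper's, and two of your three pieces are sound. For the linear part your direct computation is correct and in fact cleaner than the paper's route: the identity $\int_0^\infty\big\|\widehat{e^{\tau\Delta}u_0}\big\|_{L^1}^2\intd\tau=\int_{\RR^2}\int_{\RR^2}\frac{|\widehat{u_0}(\xi)|\,|\widehat{u_0}(\eta)|}{|\xi|^2+|\eta|^2}\intd\xi\intd\eta$ plus the Schur test with weight $|\eta|^{-1}$ (using $\int_{\RR^2}\frac{|\eta|^{-1}}{|\xi|^2+|\eta|^2}\intd\eta=\pi^2|\xi|^{-1}$) gives $C\|u_0\|_{L^2}^2$, whereas the paper passes through the Fourier--Herz characterization of Proposition \ref{prop.negative} and the embedding $L^2\hookrightarrow F\dot{B}^{-1}_{1,2}$. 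Your treatment of the Lorentz force also matches the paper's mechanism: the $2^{-2q}$ heat gain, with the $L^2_{\rm log}$ weights on $j$ and $B$ from Proposition \ref{prop-strong} paying for the passage from $\ell^2$ to $\ell^1$ in the frequency sum (this is exactly where the paper's remainder term $III$, with its factor $(k+1)$, spends the logarithm).

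The genuine gap is in the convection term, and your proposed mechanism for it would fail. After the derivative ($2^q$), the heat gain, and the $L^1_\xi\to L^2_\xi$ conversion on a dyadic annulus (another $2^q$), your scheme reduces to bounding $\sum_q\|\dot{\Delta}_q(u\otimes u)\|_{L^2_tL^2}$, and every pairing of the bounds actually available for $u$ is one logarithm short of this $\ell^1$ sum. Concretely, in your own bookkeeping (low-frequency factor in $L^2_tL^\infty$, high-frequency factor absorbed by dissipation) the block contribution is $\sup_k\|\dot{S}_ku\|_{L^2_tL^\infty}\cdot 2^{q}\|\dot{\Delta}_qu\|_{L^2_tL^2}\le M\,c_q\|\nabla u\|_{L^2_tL^2}$ with $(c_q)\in\ell^2$ only, so $\sum_q c_q$ diverges logarithmically; the alternative pairing $\|\dot{S}_qu\|_{L^2_tL^\infty}\|\dot{\Delta}_qu\|_{L^\infty_tL^2}$ would require $u\in\widetilde{L}^\infty_t\dot{B}^0_{2,1}$, which is not available either. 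The point is that the Morrey--Campanato quantity gives only a \emph{uniform-in-$q$} bound $\sup_q\|\dot{S}_qu\|_{L^2_tL^\infty}$, and, unlike $(E,B,j)$, the velocity carries no $L^2_{\rm log}$-type weighted norm to pay the logarithm; so your claim that the new quantity "supplies the logarithmic deficit for the hydrodynamic self-interaction" is not substantiated by the sketch. The paper closes this term by a completely different device (Proposition \ref{PropNS}, in the spirit of Chemin--Gallagher): no frequency decomposition at all, but Cauchy--Schwarz in $\xi$ pairing $|\xi|^{\frac12}e^{-(t-\tau)|\xi|^2}$ (whose $L^2_\xi$ norm is $\sim(t-\tau)^{-\frac34}$) against $|\xi|^{-\frac12}\widehat{(u\cdot\nabla)u}$, the bilinear estimate $\big\||\xi|^{-\frac12}\widehat{(u\cdot\nabla)u}\big\|_{L^2}\le C\|\Lambda^{\frac12}u\|_{L^2}\|\nabla u\|_{L^2}$, and then the O'Neil/Lorentz-space Young inequality in time (using $t^{-\frac34}\in L^{\frac43,\infty}(\RR^+)$) together with the energy interpolation $\|\Lambda^{\frac12}u\|_{L^4_tL^2}\le\|u\|^{\frac12}_{L^\infty_tL^2}\|\nabla u\|^{\frac12}_{L^2_tL^2}$. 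You would need to import this (or an equivalent time-Lorentz argument) for the hydrodynamic part; as written, your convection estimate diverges by precisely the logarithm the whole problem turns on.
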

\begin{proof}
By Duhamel formula, one writes the solution $u$ in the following form
\begin{equation*}
u(t,x)=u_{2d}+u_2,
\end{equation*}
where
\begin{equation*}
u_{2}(x,t)=\int_0^te^{(t-\tau)\Delta}\mathbb{P}(j\times B)(\tau)\intd\tau
\end{equation*}
and
\begin{equation*}
u_{2d}(x,t)=e^{t\Delta}u_0+\int_0^te^{(t-\tau)\Delta}\mathbb{P}(u\otimes u)(\tau)\intd\tau
\end{equation*}
which is a solution of the following equations governed by
\begin{equation}\label{eq.2dNS}
\left\{\begin{array}{ll}
\partial_tv+(u\cdot\nabla)u-\Delta v+\nabla\pi=0\qquad(t,x)\in\RR^+\times\RR^2,\\
\Div v=0,\\
v|_{t=0}=u_0.
\end{array}\right.
\end{equation}
First of all, we are going to show
\[\int_0^t\left\|\widehat{v}(\tau)\right\|_{L^1(\RR^2)}^2\,\mathrm{d}\tau<\infty,\]
which is the direct consequence of the following proposition.
\begin{proposition}\label{PropNS}
Let $v$ be a solution of the nonlinear equations \eqref{eq.2dNS}.
 Then, we have
\[ \int_0^t\|\hat{v}(\tau)\|_{L^1(\RR^2)}^2\,\mathrm{d}\tau\leq C\left(\|u_0\|_{L^2(\RR^2)}\right).\]
\end{proposition}
\begin{remark}
Let us point out that in this proposition we give a new method to show that the Leray solution of the two-dimensional Navier-Stokes equations satisfies
\[\int_0^t\big\|u_{2d}(\tau)\big\|^2_{L^\infty(\RR^2)}\,\mathrm{d}\tau<\infty,\]
which was shown in \cite{CG06}. More importantly, we also prove that
$\int_0^t\left\|\widehat{u_{2d}}(\tau)\right\|_{L^1(\RR^2)}^2\,\mathrm{d}\tau<\infty.$
\end{remark}
\begin{proof}[Proof of Proposition \ref{PropNS}.]
By Duhamel formula, we have that
\[v(x,t)=e^{t\Delta}u_0+\int_0^te^{(t-s)\Delta}\mathbb{P}\mathrm{div}\big(u\otimes u\big)\,\mathrm{d}\tau.\]
Taking Fourier transform yields
\[\hat{v}(\xi,t)=e^{-t|\xi|^2}\hat{u}_0+\int_0^te^{-(t-s)|\xi|^2}\left(\mathrm{I_{d}}-\frac{\xi_i\xi_j}{|\xi|^2}\right)i\xi\cdot\widehat{\big(u\otimes u\big)}\,\mathrm{d}\tau.\]
For the linear part, Proposition \ref{prop.negative} allows us to get
\begin{equation*}
\begin{split}
\big\|e^{-t|\xi|^2}\hat{u}_0\big\|_{L^2(\RR^+;\,L^1(\RR^2))}=\left\|t^{\frac12}\big\|e^{-t|\xi|^2}\hat{u}_0\big\|_{L^1(\RR^2)}\right\|_{L^2(\mathbb{R}^+;\,\frac{\mathrm{d}t}{t})}\sim\|u_0\|_{F\dot{B}^{-1}_{1,2}}.
\end{split}
\end{equation*}
For the nonlinear part, we see that
\begin{equation*}
\begin{split}
& \left\|\int_0^te^{-(t-\tau)|\xi|^2}\left(\mathrm{I_{d}}-\frac{\xi_i\xi_j}{|\xi|^2}\right)i\xi\cdot\widehat{\big(u\otimes u\big)}\,\mathrm{d}\tau\right\|_{L^2((\RR^+;\,L^1(\RR^2))}\\
\leq&\left\|\int_0^t\left\||\xi|^{\frac12}e^{-(t-\tau)|\xi|^2}\right\|_{L^2(\RR^2)}\left\||\xi|^{-\frac12}\widehat{\big((u\cdot\nabla) u\big)}\right\|_{L^2(\RR^6)}\,\mathrm{d}\tau\right\|_{L^2(\RR^+)}\\
\leq&\left\|\int_0^t\left\||\xi|^{\frac12}e^{-(t-\tau)|\xi|^2}\right\|_{L^2(\RR^2)}\left\||\xi|^{-\frac12}\widehat{\big((u\cdot\nabla) u\big)}\right\|_{L^2(\RR^2)}\,\mathrm{d}\tau\right\|_{L^2(\RR^+)}.
\end{split}
\end{equation*}
Thanks to the Bony paraproduct decomposition and $\Div u=0$, we have the following estimate for the bilinear term
\[\left\||\xi|^{-\frac12}\widehat{\big((u\cdot\nabla) u\big)}\right\|_{L^2(\RR^2)}\leq C\|\Lambda^{\frac12} u(t)\|_{L^2(\RR^2)}\|\nabla u(t)\|_{L^2(\RR^2)}.\]
Therefore, we have
 \begin{align*}
& \left\|\int_0^te^{-(t-\tau)|\xi|^2}\left(\mathrm{I_{d}}-\frac{\xi_i\xi_j}{|\xi|^2}\right)i\xi\cdot\widehat{\big(u\otimes u\big)}\,\mathrm{d}\tau\right\|_{L^2((\RR^+;\,L^1(\RR^2))}\\
\leq&C\Big\|\int_0^t(t-\tau)^{-\frac34}\|\Lambda^{\frac12} u(t)\|_{L^2(\RR^2)}\|\nabla u(t)\|_{L^2(\RR^2)}\,\mathrm{d}\tau\Big\|_{L^2(\RR^+)}\\
\leq&C\big\|t^{-\frac34}\big\|_{L^{\frac43,\infty}(\RR^+)}\left\|\|\Lambda^{\frac12} u(t)\|_{L^2(\RR^2)}\|\nabla u(t)\|_{L^2(\RR^2)}\right\|_{L^{\frac43,2}(\RR^+)}\\
\leq&C\big\|\|\Lambda^{\frac12} u(t)\|_{L^2(\RR^2)}\big\|_{L^{4,\infty}(\RR^+)}\left\|\|\nabla u(t)\|_{L^2(\RR^2)}\right\|_{L^{2,2}(\RR^+)}\\
\leq&C\big\|\Lambda^{\frac12} u\big\|_{L^4(\RR^+;\,L^2(\RR^2))} \|\nabla u \|_{L^2(\RR^+;\,L^2(\RR^2))},
\end{align*}
where we have used the following lemma.
\begin{lemma}[\cite{LGrafakos,ONeil}]
\begin{itemize}
               \item Let $1<p,q,r<\infty$, $0< s_1,s_2\leq\infty,$ $\frac1p+\frac1q=\frac1r+1$, and $\frac{1}{s_1}+\frac{1}{s_2}=\frac1s.$ Then there holds
               \[\|f\ast g\|_{L^{r,s}(\RR^d)}\leq C(p,q,s_1,s_2)\|f\|_{L^{p,s_1}(\RR^d)}\|g\|_{L^{q,s_2}(\RR^d)}.\]
               \item Let $0<p,q,r\leq\infty$, $0< s_1,s_2\leq\infty,$ $\frac1p+\frac1q=\frac1r$, and $\frac{1}{s_1}+\frac{1}{s_2}=\frac1s.$ Then we have the H\"older inequality for Lorentz spaces
                   \[\|fg\|_{L^{r,s}(\RR^d)}\leq C(p,q,s_1,s_2)\|f\|_{L^{p,s_1}(\RR^d)}\|g\|_{L^{q,s_2}(\RR^d)}.\]
             \end{itemize}

\end{lemma}
Collecting these estimates, we immediately get
\begin{equation*}
\left(\int_0^\infty\|\hat{u}(t)\|^2_{L^1(\RR^2)}\,\mathrm{d}\tau\right)^{\frac12}\leq C\|u_0\|^2_{L^2(\RR^2)}+C\big\|\Lambda^{\frac12} u\big\|_{L^4(\RR^+;\,L^2(\RR^2))}\big\|\nabla u\big\|_{L^2(\RR^+;\,L^2(\RR^2))}.
\end{equation*}
This together with the energy estimate
\[\|u(t)\|_{L^2(\RR^2)}^2+\int_0^t\|\nabla u(\tau)\|_{L^2(\RR^2)}^2\,\mathrm{d}\tau\leq \|u_0\|_{L^2(\RR^2)}^2\]
and the Hausdorff-Young inequality entails the desired result.
\end{proof}

Next, we just need to bound the following the quantity including $u_2.$
Taking the Fourier transform and taking $L^1$-norm, we readily have
\begin{equation*}
\begin{split}
\left(\int_0^t\|\hat{u_{2}}(s)\|_{L^1(\RR^2)}^2\intd s\right)^{\frac12}\leq&\left(\int_0^t\left\|\int_0^se^{-(s-\tau)|\xi|^2}\widehat{\mathbb{P}(j\times B)}(\tau)\intd\tau\right\|_{L^1(\RR^2)}^2\intd s\right)^{\frac12}\\
 \leq&C\left(\int_0^t\left\|\int_0^se^{-(s-\tau)|\xi|^2} \widehat{(j\times B)} (\tau)\intd\tau\right\|_{L^1(\RR^2)}^2\intd s\right)^{\frac12}.
\end{split}
\end{equation*}
The inhomogeneous Bony paraproduct decomposition allows us to write
\begin{align*}
&\left(\int_0^t\left\|\int_0^se^{-(s-\tau)|\xi|^2}\widehat{(j\times B)} (\tau)\intd\tau\right\|_{L^1(\RR^2)}^2\intd s\right)^{\frac12} \\
=&\left(\int_0^t\left\|\int_0^se^{-(s-\tau)|\xi|^2} \widehat{(T_{j^\ell} B^m)} (\tau)\intd\tau\right\|_{L^1(\RR^2)}^2\intd s\right)^{\frac12}\\
&+\left(\int_0^t\left\|\int_0^se^{-(s-\tau)|\xi|^2} \widehat{(T_{B^m} j^\ell )} (\tau)\intd\tau\right\|_{L^1(\RR^2)}^2\intd s\right)^{\frac12} \\
&+\left(\int_0^t\left\|\int_0^se^{-(s-\tau)|\xi|^2} \widehat{ R(B^m ,j^\ell) } (\tau)\intd\tau\right\|_{L^1(\RR^2)}^2\intd s\right)^{\frac12}\\
:=&I+II+III,
\end{align*}
where $1\leq j,\,m\leq3.$

By the  Minkowski inequality, the H\"older inequality and the Young inequality, we have
\begin{equation}\label{L1-I}
\begin{split}
I\leq&\left\|\Big(\int_0^t\Big|\int_0^se^{-(s-\tau)|\xi|^2}\widehat{(T_{j^\ell} B^m)}(\tau)\intd\tau\Big|^2\intd s\Big)^{\frac12}\right\|_{L^1(\RR^2)} \\
\leq &C\Big\|\frac{1}{|\xi|^2}\big\|\widehat{\dot{T}_{j^{\ell}} B^{m}}\big\|_{L^2_t}\Big\|_{L^1(\RR^2)}\\
\leq&C\sum_{q=-1}^{\infty}2^{-2q}\Big\|\big\|\widehat{{S}_{q-1}j}\ast\widehat{\dot{\Delta}_qB }\big\|_{L^2_t}\Big\|_{L^1(\RR^2)}\\
\leq&C\sum_{q=-1}^{\infty}2^{-q}\Big\|\Big\|\widehat{{S}_{q-1}j}\ast \widehat{{\Delta}_qB} \Big\|_{L^2_t}\Big\|_{L^2(\RR^2)}\\
\leq&C\Big(\sum_{q=-1}^{\infty}2^{-2q}\big\|\widehat{{S}_{q-1}j}\big\|^2_{L^2_tL^1(\RR^2)}\Big)^{\frac12}\Big(\sum_{q=-1}^{\infty}
\big\|\widehat{{\Delta}_qB}\big\|^2_{L^\infty_tL^2(\RR^2)}\Big)^{\frac12}.
\end{split}
\end{equation}
Note that
\begin{align*}
\sum_{q=-1}^{\infty}2^{-2q}\big\|\widehat{{S}_{q-1}j}\big\|^2_{L^2_tL^1(\RR^2)}\leq& C\sum_{q=-1}^{\infty}2^{-2q}\big\|\widehat{{\Delta}_{q}j}\big\|^2_{L^2_tL^1(\RR^2)}\\
\leq& C\sum_{q=-1}^{\infty}\big\|\widehat{{\Delta}_{q}j}\big\|^2_{L^2_tL^2(\RR^2)}.
\end{align*}
Plugging this estimate in \eqref{L1-I} and using the Plancherel theorem, we see that
\begin{equation*}
\begin{split}
I\leq&C\Big(\sum_{q=-1}^{\infty}\big\|\widehat{{\Delta}_{q}j}\big\|^2_{L^2_tL^2(\RR^2)}\Big)^{\frac12}\Big(\sum_{q=-1}^{\infty}
\big\|\widehat{{\Delta}_qB}\big\|^2_{L^\infty_tL^2(\RR^2)}\Big)^{\frac12}\\
\leq&C\|j\|_{L^2_tL^2(\RR^2)}\|B\|_{\widetilde{L}^\infty_t{B}^0_{2,2}(\RR^2)}\leq C\|j\|_{L^2_tL^2(\RR^2)}\|B\|_{\widetilde{L}^\infty_tL^2_{\rm log}(\RR^2)}.
\end{split}
\end{equation*}
In the similar way, one has
\begin{equation*}
II\leq C\|j\|_{L^2_tL^2(\RR^2)}\|B\|_{\widetilde{L}^\infty_tL^2_{\rm log}(\RR^2)}.
\end{equation*}
It remains for us to bound the term $III.$ We bound it as follows:
\begin{equation}\label{III-1}
\begin{split}
III\leq&C\left(\int_0^t\left\|\int_0^se^{-(s-\tau)|\xi|^2} \widehat{R(j^\ell,B^m)} (\tau)\intd\tau\right\|_{L^1(\RR^2)}^2\intd s\right)^{\frac12}\\
 \leq&C\sum_{p=-1}^{\infty}\left(\int_0^t\left\|\int_0^s\psi_{p}e^{-(s-\tau)|\xi|^2} \widehat{R(j^\ell,B^m)} (\tau)\intd\tau\right\|_{L^1(\RR^2)}^2\intd s\right)^{\frac12}\\
 \leq&C\sum_{p=-1}^{\infty}\left(\int_0^t\left\|\int_0^s\psi_{p}e^{-c2^{2p}(s-\tau)} \widehat{R(j^\ell,B^m)} (\tau)\intd\tau\right\|_{L^1(\RR^2)}^2\intd s\right)^{\frac12}\\
  \leq&C\sum_{p=-1}^{\infty}\left(\int_0^t\left |\int_0^s2^{2p}e^{-c2^{2p}(s-\tau)} \big\|\psi_{p}\widehat{R(j^\ell,B^m)}\big\|_{L^\infty(\RR^2)} (\tau)\intd\tau\right |^2\intd s\right)^{\frac12}.
\end{split}
\end{equation}
By the Young inequality and the H\"older inequality, one has
\begin{align*}
&\sum_{p=-1}^{\infty}\left(\int_0^t\left |\int_0^s2^{2p}e^{-c2^{2p}(s-\tau)} \big\|\psi_{p}\widehat{R(j^\ell,B^m)}\big\|_{L^\infty(\RR^2)} (\tau)\intd\tau\right |^2\intd s\right)^{\frac12}\\
\leq&\sum_{p=-1}^{\infty}\left(\int_0^t \big\|\psi_{p}\widehat{R(j^\ell,B^m)}\big\|^{2}_{L^\infty(\RR^2)} (\tau) \intd \tau\right)^{\frac12}\\
\leq&\sum_{p=-1}^{\infty}\sum_{k\geq p-5}\big\|\widehat{{\Delta}_kj}\big\|_{L^2_tL^2(\RR^2)}\big\|\widehat{{\Delta}_kB}\big\|_{L^\infty_tL^2(\RR^2)}
\end{align*}
Inserting this estimate into \eqref{III-1} and using Fubini theorem, we readily have
\begin{align*}
III\leq&C\sum_{p=-1}^{\infty}\sum_{k\geq p-5}\big\|\widehat{ {\Delta}_kj}\big\|_{L^2_tL^2(\RR^2)}\big\|\widehat{ {\Delta}_kB}\big\|_{L^\infty_tL^2(\RR^2)}\\
=&C\sum_{k=-1}^{\infty}\sum_{-1\leq p\leq k+5}\big\|\widehat{ {\Delta}_kj}\big\|_{L^2_tL^2(\RR^2)}\big\|\widehat{ {\Delta}_kB}\big\|_{L^\infty_tL^2(\RR^2)}\\
=&C\sum_{k=-1}^{\infty}(k+1)\big\|\widehat{ {\Delta}_kj}\big\|_{L^2_tL^2(\RR^2)}\big\|\widehat{ {\Delta}_kB}\big\|_{L^\infty_tL^2(\RR^2)}\\
\leq&C\|j\|_{L^2_tL^2_{\rm log}(\RR^2)}\|B\|_{\widetilde{L}^\infty_tL^2_{\rm log}(\RR^2)}.
\end{align*}
Collecting estimates for $I, II, III$, we end the proof of this proposition.
\end{proof}
\section{Proof of main results}\label{Proof}
\setcounter{section}{4}\setcounter{equation}{0}
In this section, we are going to show the main theorems. Let us begin with the uniqueness of solution.

 \subsection{Uniqueness}
 This subsection is devoted to prove the uniqueness of solutions established in our theorems. To do this, it suffices to show the following proposition.
 \begin{proposition}\label{prop.uni}
 Let  $ E,B,\widetilde{E},\widetilde{B}\in C_{\rm b}([0,T];\,L^2_{\rm log}(\RR^2))$, $j,\widetilde{j}\in L^2([0,T];\,L^2_{\rm log}(\RR^2))$ and  $ u,\widetilde{u}\in C_{\rm b}([0,T];\,L^2(\RR^2))\cap L^2([0,T];\,\dot{H}^1(\RR^2))$ satisfying
 \[ \int_0^T\big\|(u,\widetilde{u})(\tau)\big\|^2_{L^\infty(\RR^2)}\intd\tau<\infty.\]
 Assume that  $(u,E,B,p)$ and $(\widetilde{u},\widetilde{E},\widetilde{B},\widetilde{p})$ be two solutions of system \eqref{eq.MNS} associated with the same initial data. Then $(u,E,B,p)\equiv(\widetilde{u},\widetilde{E},\widetilde{B},\widetilde{p})$ on interval $[0,T].$
 \end{proposition}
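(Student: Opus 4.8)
The plan is to run an $L^2$ energy estimate on the differences $\delta u:=u-\widetilde u$, $\delta E:=E-\widetilde E$, $\delta B:=B-\widetilde B$, $\delta j:=j-\widetilde j$, which solve the linearised system with \emph{zero} initial data; uniqueness then amounts to showing the difference energy vanishes. First I would take the $L^2$ inner products of the three difference equations against $\delta u$, $\delta E$ and $\delta B$. Incompressibility $\Div\delta u=0$ kills the pressure and makes $\int_{\RR^2}(\widetilde u\cdot\nabla)\delta u\cdot\delta u\intd x=0$, while integrating by parts cancels the two curl contributions $\int_{\RR^2}\mathrm{curl}\,\delta B\cdot\delta E\intd x-\int_{\RR^2}\mathrm{curl}\,\delta E\cdot\delta B\intd x=0$. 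Writing the Ohm and Lorentz differences as $\delta j=\delta E+\delta u\times B+\widetilde u\times\delta B$ and $j\times B-\widetilde j\times\widetilde B=\delta j\times B+\widetilde j\times\delta B$, the algebraic identity behind the energy balance \eqref{eq.dis} gives $\int_{\RR^2}(\delta j\times B)\cdot\delta u\intd x+\int_{\RR^2}\delta j\cdot(\delta u\times B)\intd x=0$. After these cancellations the estimate collapses to
\[
\tfrac12\dtd\big(\|\delta u\|_{L^2(\RR^2)}^2+\|\delta E\|_{L^2(\RR^2)}^2+\|\delta B\|_{L^2(\RR^2)}^2\big)+\|\nabla\delta u\|_{L^2(\RR^2)}^2+\|\delta j\|_{L^2(\RR^2)}^2=\mathcal{N},
\]
with $\mathcal{N}=-\int_{\RR^2}(\delta u\cdot\nabla)u\cdot\delta u\intd x+\int_{\RR^2}\delta j\cdot(\widetilde u\times\delta B)\intd x+\int_{\RR^2}(\widetilde j\times\delta B)\cdot\delta u\intd x$.

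Next I would dispose of the two harmless contributions. The convection term is the standard two–dimensional one: by the Ladyzhenskaya inequality $\|\delta u\|_{L^4}^2\le C\|\delta u\|_{L^2}\|\nabla\delta u\|_{L^2}$ it is bounded by $\tfrac18\|\nabla\delta u\|_{L^2}^2+C\|\nabla u\|_{L^2}^2\|\delta u\|_{L^2}^2$, the last factor being Gr\"onwall–admissible since $\|\nabla u\|_{L^2}^2\in L^1_t$. The term $\int_{\RR^2}\delta j\cdot(\widetilde u\times\delta B)\intd x\le\|\delta j\|_{L^2}\|\widetilde u\|_{L^\infty}\|\delta B\|_{L^2}$ splits by Young into $\tfrac14\|\delta j\|_{L^2}^2+C\|\widetilde u\|_{L^\infty}^2\|\delta B\|_{L^2}^2$; here the dissipation absorbs the first piece while $\|\widetilde u\|_{L^\infty}^2\in L^1_t$ makes the second Gr\"onwall–admissible. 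This is precisely where the hypothesis $\widetilde u\in L^2_tL^\infty$ enters.

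The hard part is the last term $\int_{\RR^2}(\widetilde j\times\delta B)\cdot\delta u\intd x$. Because the magnetic field enjoys no parabolic smoothing, $\delta B$ can only be measured in $L^2$, and $\widetilde j$ is no better than $L^2_tL^2$; H\"older then forces the remaining factor $\delta u$ into $L^\infty$, a norm which in two dimensions is \emph{not} controlled by the difference energy $\|\delta u\|_{L^2}^2+\|\nabla\delta u\|_{L^2}^2$. Merely bounding $\|\delta u\|_{L^\infty}\le\|u\|_{L^\infty}+\|\widetilde u\|_{L^\infty}$ and invoking $u,\widetilde u\in L^2_tL^\infty$ produces a genuinely floating constant and cannot close the argument. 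To overcome this I would tie $\|\delta u\|_{L^\infty}$ back to the difference energy itself: $\delta u$ solves the forced heat equation $\partial_t\delta u-\Delta\delta u=\mathbb{P}\big(-\Div(u\otimes u-\widetilde u\otimes\widetilde u)+(j\times B-\widetilde j\times\widetilde B)\big)$ with $\delta u|_{t=0}=0$, so I would apply the Duhamel/Fourier–$L^1$ machinery of Proposition \ref{PropNS} and Proposition \ref{coro-L2LI} to $\delta u$. This yields $\int_0^t\|\delta u\|_{L^\infty}^2\intd\tau\le C\int_0^t\|\widehat{\delta u}\|_{L^1}^2\intd\tau$, whose right–hand side is controlled through $\|\Lambda^{1/2}\delta u\|_{L^4_tL^2}\lesssim\|\delta u\|_{L^\infty_tL^2}^{1/2}\|\nabla\delta u\|_{L^2_tL^2}^{1/2}$ together with the linear contributions of $\delta j$ and $\delta B$. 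Hence $\|\delta u\|_{L^2_tL^\infty}$ is governed by the difference energy and its dissipation, so the offending term becomes quadratic in the difference and carries a factor that tends to $0$ with the length of the time interval.

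Finally, combining the energy inequality with this Fourier–$L^1$ bound and choosing a short interval $[0,T_0]$ on which the self–referential small factor $\int_0^{T_0}\|\delta u\|_{L^\infty}^2\intd\tau$ is absorbed into the dissipation, Gr\"onwall's lemma forces $\delta u\equiv\delta E\equiv\delta B\equiv0$ on $[0,T_0]$. Since $T_0$ depends only on the $L^1_t$–norms of $\|\nabla u\|_{L^2}^2$, $\|u\|_{L^\infty}^2$, $\|\widetilde u\|_{L^\infty}^2$, $\|j\|_{L^2}^2$ and $\|\widetilde j\|_{L^2}^2$, it can be chosen uniformly, and a finite covering of $[0,T]$ propagates the conclusion to the whole interval, giving $(u,E,B,p)\equiv(\widetilde u,\widetilde E,\widetilde B,\widetilde p)$. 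I expect the delicate bookkeeping in the Fourier–$L^1$ control of $\delta u$—ensuring every factor is either absorbable by the dissipation or multiplied by an $L^1_t$ weight—to be the principal technical burden.
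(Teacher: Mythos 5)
Your setup, the cancellation algebra, the treatment of the convection term, and the term $\int\delta j\cdot(\widetilde u\times\delta B)\intd x$ are all fine, and you correctly identified $\int(\widetilde j\times\delta B)\cdot\delta u\intd x$ as the crux. But the proposed resolution — controlling $\int_0^t\|\delta u\|^2_{L^\infty}\intd\tau$ by applying the Fourier--$L^1$/Duhamel machinery of Propositions \ref{PropNS} and \ref{coro-L2LI} to $\delta u$ — has a genuine gap. In the Duhamel representation of $\delta u$, the Lorentz forcing is $\delta j\times B+\widetilde j\times\delta B$, and the estimate of Proposition \ref{coro-L2LI} is not ``linear in $L^2$'' in either factor: its remainder term (the term $III$ there) produces, after summing the output frequency $p$ over $-1\le p\le k+5$, a series of the form $\sum_{k}(k+1)\big\|\widehat{\Delta_k j}\big\|_{L^2_tL^2}\big\|\widehat{\Delta_k B}\big\|_{L^\infty_tL^2}$, i.e.\ a logarithmic loss of one factor $k$ that must be split between the two entries. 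This is exactly why that proposition needs $\|j\|_{L^2_tL^2_{\rm log}}\|B\|_{\widetilde L^\infty_tL^2_{\rm log}}$ rather than plain $L^2$ norms. Applied to $\widetilde j\times\delta B$, your hypothesis gives $\widetilde j\in L^2_tL^2_{\rm log}$, i.e.\ a weight $\sqrt{k+1}$ on that factor, but the remaining $\sqrt{k+1}$ must fall on $\delta B$, so you need $\|\delta B\|_{\widetilde L^\infty_tL^2_{\rm log}}$ (similarly, $\delta j\times B$ needs $\|\delta j\|_{L^2_tL^2_{\rm log}}$). Your Gr\"onwall scheme only propagates the pure $L^2$ difference energy $\|\delta u\|_{L^2}^2+\|(\delta E,\delta B)\|_{L^2}^2$ plus the dissipation $\|\nabla\delta u\|_{L^2}^2+\|\delta j\|_{L^2}^2$, none of which controls these log-weighted norms. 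Bounding $\|\delta B\|_{L^2_{\rm log}}\le\|B\|_{L^2_{\rm log}}+\|\widetilde B\|_{L^2_{\rm log}}$ instead reintroduces precisely the kind of non-vanishing ``floating constant'' you rejected for $\|\delta u\|_{L^\infty}$: the resulting bound on $\|\delta u\|_{L^2_tL^\infty}$ no longer tends to zero with the difference energy, so the self-referential absorption on a short interval cannot close, and you cannot conclude $\delta u\equiv0$.

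The paper avoids this circle by never putting $\delta u$ in $L^\infty$ at all: it measures $(\delta E,\delta B)$ in $L^2_{\rm log}$ from the start and reuses the trilinear paraproduct estimates of Proposition \ref{prop-strong}, in which the $L^\infty$ factor always lands on the \emph{known} velocities $u,\widetilde u$ (available by hypothesis), the log-weighted norms land on the known fields $j,B$ or on $(\delta E,\delta B)$ themselves, and the velocity difference enters only through $\|\nabla\delta u\|_{L^2}$ (absorbed by the dissipation via Cauchy--Schwarz) and $\|\delta u\|_{L^2}$. This yields a closed Gr\"onwall inequality for $\|\delta u\|_{L^2}^2+\|(\delta E,\delta B)\|_{L^2_{\rm log}}^2$ with weights $\|(B,j)\|^2_{L^2_{\rm log}}+\|(u,\widetilde u)\|^2_{L^\infty}+\|\nabla\widetilde u\|^2_{L^2}\in L^1_t$. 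To repair your argument you would have to upgrade your difference energy to include $\|(\delta E,\delta B)\|_{L^2_{\rm log}}$ and estimate the trilinear terms by frequency decomposition as in Proposition \ref{prop-strong} — which is, in substance, the paper's proof.
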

 \begin{proof}
Letting $(\delta u,\delta E,\delta B,\delta p):=(u-\tilde{u},E-\tilde{E},B-\tilde{B},\pi-\tilde{\pi})$, then we easily find that the difference  $(\delta u,\delta E,\delta B,\delta p)$ satisfies
\begin{equation}\label{eq.diff}
\left\{\begin{array}{ll}
\partial_t\delta u+(u\cdot\nabla)\delta u-\Delta \delta u+\nabla\delta \pi= j\times \delta B+\delta j\times \tilde{B}-(\delta u\cdot\nabla)\tilde{u}\\
\partial_t\delta E-\mathrm{curl}\,\delta B=-\delta j,\\
\partial_t\delta B+\mathrm{curl}\,\delta E=0\\
\mathrm{div}\,\delta u=\mathrm{div}\,\delta B=0,
\end{array}\right.
\end{equation}
where $\delta j=\delta E+\delta u\times B+\tilde{u}\times\delta B.$ It corresponds to the following initial condition
\begin{equation*}
(\delta u,\delta E,\delta B)|_{t=0}=(0,0,0).
\end{equation*}
Taking the standard $L^2$-estimate of $\delta u$ yields
\begin{equation*}
\begin{split}
&\frac12\dtd\big\|\delta u(t)\big\|_{L^2(\RR^2)}^2+\big\|\nabla \delta u(t)\big\|^2_{L^2(\RR^2)}\\
=&\int_{\mathbb{R}^2} (\delta j\times B)\cdot\delta u\intd x+\int_{\mathbb{R}^2} (j\times \delta B)\cdot\delta u\intd x-\int_{\mathbb{R}^2} (\delta u\cdot\nabla)\tilde{u}\cdot\delta u\intd x.
\end{split}
\end{equation*}
By the H\"older inequality and the interpolation theorem, we see that
\begin{equation*}
\begin{split}
-\int_{\mathbb{R}^2} (\delta u\cdot\nabla) \tilde{u}\cdot\delta u\intd x
\leq& \int_{\mathbb{R}^2}|\delta u|^2|\nabla\tilde{u}|\intd x\\
\leq& \|\nabla\tilde{u}\|_{L^2(\RR^2)}\|\delta u\|_{L^4(\RR^2)}^2\\
\leq&C\|\nabla\tilde{u}\|_{L^2(\RR^2)}\|\delta u\|_{L^2(\RR^2)}\|\nabla\delta u\|_{L^2(\RR^2)}\\
\leq&C\|\nabla\tilde{u}\|^2_{L^2(\RR^2)}\|\delta u\|^2_{L^2(\RR^2)}+\frac14 \|\nabla\delta u\|^2_{L^2(\RR^2)}.
\end{split}
\end{equation*}
By the same argument as in the proof of Proposition \ref{prop-strong}, we have
\begin{equation*}
\begin{split}
&\int_{\mathbb{R}^2} (\delta j\times B)\cdot\delta u\intd x+\int_{\mathbb{R}^2} (j\times \delta B)\cdot\delta u\intd x\\
\leq&C\big(\|(B,j)(t)\|^2_{L^2_{\rm log}(\RR^2)}+\|u(t)\|^2_{L^\infty(\RR^2)}\big)\big(\|\delta B(t)\|^2_{L^2_{\rm log}(\RR^2)}+\|\delta E(t)\|^2_{L^2_{\rm log}(\RR^2)}\big)\\
&+\frac14 \|\nabla\delta u\|^2_{L^2(\RR^2)}.
\end{split}
\end{equation*}
Collecting the above estimates, we readily have
\begin{equation}\label{eq.deltau}
\begin{split}
&\big\| \delta u(t)\big\|_{L^2(\RR^2)}^2+\int_0^t\big\|\nabla \delta u(s)\big\|^2_{L^2(\RR^2)}\intd s\\
\leq&\int_0^tC\big(\|(B,j)(\tau)\|^2_{L^2_{\rm log}(\RR^2)}+\|u(\tau)\|^2_{L^\infty(\RR^2)}\big)\big(\|\delta B(\tau)\|^2_{L^2_{\rm log}(\RR^2)}+\|\delta E(\tau)\|^2_{L^2_{\rm log}(\RR^2)}\big)\intd\tau\\
&+C\int_0^t\|\nabla\tilde{u}(\tau)\|^2_{L^2(\RR^2)}\|\delta u(\tau)\|^2_{L^2(\RR^2)}\intd\tau.
\end{split}
\end{equation}
Taking $L^2_{\rm log}$-norm of $(\delta E,\delta B)$ and integrating the resulting equality with respect to time $t$, we obtain
\begin{equation*}
\begin{split}
&\|\delta E(t)\|^2_{ L^2_{\rm log}(\RR^2)}+\|\delta B(t)\|^2_{L^2_{\rm log}(\RR^2)}+\int_0^t\|\delta E(\tau)\|^2_{L^2_{\rm log}(\RR^2)}\intd\tau\\
\leq&\sum_{q=-\infty}^0\int_0^t\int_{\RR^2}\big|\dot{\Delta}_q(\delta u\times B)\dot{\Delta}_q\delta E\big|\,\mathrm{d}x\mathrm{d}\tau+\sum_{q=-\infty}^0\int_0^t\int_{\RR^2}\big|\dot{\Delta}_q( u\times\delta B)\dot{\Delta}_q\delta E\big|\,\mathrm{d}x\mathrm{d}\tau\\
&+\sum_{q=1}^{\infty}q\int_0^t\int_{\RR^2}\big|\dot{\Delta}_q(\delta u\times B)\dot{\Delta}_q\delta E\big|\,\mathrm{d}x\mathrm{d}\tau+\sum_{q=1}^\infty q\int_0^t\int_{\RR^2}\big|\dot{\Delta}_q( u\times\delta B)\dot{\Delta}_q\delta E\big|\,\mathrm{d}x\mathrm{d}\tau.
\end{split}
\end{equation*}
By the same argument as in the proof of Proposition \ref{prop-strong}, we have
\begin{equation*}
\begin{split}
&\sum_{q=-\infty}^0\int_0^t\int_{\RR^2}\big|\dot{\Delta}_q(\delta u\times B)\dot{\Delta}_q\delta E\big|\,\mathrm{d}x\mathrm{d}\tau+\sum_{q=-\infty}^0\int_0^t\int_{\RR^2}\big|\dot{\Delta}_q( u\times\delta B)\dot{\Delta}_q\delta E\big|\,\mathrm{d}x\mathrm{d}\tau\\
&+\sum_{q=1}^{\infty}q\int_0^t\int_{\RR^2}\big|\dot{\Delta}_q(\delta u\times B)\dot{\Delta}_q\delta E\big|\,\mathrm{d}x\mathrm{d}\tau+\sum_{q=1}^\infty q\int_0^t\int_{\RR^2}\big|\dot{\Delta}_q( u\times\delta B)\dot{\Delta}_q\delta E\big|\,\mathrm{d}x\mathrm{d}\tau\\
\leq&C\int_0^t\|B(\tau)\|^{2}_{L^2_{\rm log}(\RR^2)}\|\nabla \delta u(\tau)\|_{L^2(\RR^2)}^2\intd\tau+C\int_0^t\|\widetilde{u}(\tau)\|_{L^\infty(\RR^2)}^2\|\delta B(\tau)\|_{L^2_{\rm log}(\RR^2)}^2\intd\tau\\
&+\frac14\int_0^t\|\delta E(\tau)\|^2_{L^2_{\rm log}(\RR^2)}\intd\tau.
\end{split}
\end{equation*}
Collecting all estimates of $(\delta E,\delta B)$ gives
\begin{equation*}
\begin{split}
&\|\delta E(t)\|^2_{L^2_{\rm log}(\RR^2)}+\|\delta B(t)\|^2_{L^2_{\rm log}(\RR^2)}+\int_0^t\|\delta E(\tau)\|^2_{L^2_{\rm log}(\RR^2)}\intd\tau\\
\leq&C\int_0^t\|B(\tau)\|^{2}_{L^2_{\rm log}(\RR^2)}\|\nabla \delta u(\tau)\|_{L^2(\RR^2)}^2\intd\tau+C\int_0^t\|\widetilde{u}(\tau)\|_{L^\infty(\RR^2)}^2\|\delta B(\tau)\|_{L^2_{\rm log}(\RR^2)}^2\intd\tau.
\end{split}
\end{equation*}
This estimate together with \eqref{eq.deltau} enables us to conclude that
\begin{equation*}
\begin{split}
&\big\|\delta u(t)\big\|_{L^2(\RR^2)}^2+\int_0^t\big\|\nabla \delta u(\tau)\big\|^2_{L^2(\RR^2)}\intd\tau+\big\|(\delta E,\delta B)(t)\big\|^2_{L^2_{\rm log}(\RR^2)}+\int_0^t\|\delta E(\tau)\|^2_{L^2_{\rm log}(\RR^2)}\intd\tau\\
\leq&\int_0^tC\big(\|(B,j)(\tau)\|^2_{L^2_{\rm log}(\RR^2)}+\|u(\tau)\|^2_{L^\infty(\RR^2)}\big)\big(\|\delta B(\tau)\|^2_{L^2_{\rm log}(\RR^2)}+\|\delta j(\tau)\|^2_{L^2_{\rm log}(\RR^2)}\big)\intd\tau.
\end{split}
\end{equation*}
Since $(\delta u,\delta E,\delta B)|_{t=0}=(0,0,0)$, there exists a time $t_0\in[0,T]$ such that
\[\|(\delta u,\delta E,\delta B)(t)\|_{L^2}\equiv0\,\, \text{on}\,\,t\in[0,t_0]\quad\text{and}\quad\|(\delta u,\delta E,\delta B)(t)\|_{L^2}>0\quad\text{on}\,\,(t_0,T]. \]
If $t_0= T$ then the uniqueness follows. Therefore, we assume $t_0< T$.
By the Gronwall inequality, it follows that $(\delta u,\delta E,\delta B)\equiv0 $ on $[t_0,T].$ So, we eventually get the uniqueness of solution.
\end{proof}
  \subsection{Existence}
In this subsection, we focus on the existence statement of Theorem \ref{THM-2}. To do this, we will adopt the following approximate scheme:
\begin{equation}\label{eq.app}
\left\{\begin{array}{ll}
\partial_tu^N+(u^N\cdot\nabla)u^N-\Delta u^N+\nabla p^N=j^N\times B^N\quad(t,x)\in\RR^+\times\RR^2,\\
\partial_tE^N-\mathrm{curl}\,B^N=-j^N,\\
\partial_tB^N+\mathrm{curl}\,E^N=0,\\
j^N=\sigma\left(E^N+u^N\times B^N\right),\\
\Div u^N=\Div B^N=0,\\
(u^N,E^N,B^N)|_{t=0}=(S_{N+1}u_0,S_{N+1}E_0,S_{N+1}B_0).
\end{array}\right.
\end{equation}
Since $(u_0,B_0,E_0)\in\big(L^2(\RR^3)\big)^3$, we have $(u_0,B_0,E_0)\in\cap_{s>0}\big(H^s(\RR^2)\big)^3$. From the main theorem proved in \cite{Masmoudi-10}, we know that the approximate system \eqref{eq.app} exists a unique global solution $(u^N,B^N,E^N)\in\big(C(\RR^+;\,H^s(\RR^2))\big)^3$ for all $s>0.$
Thanks to some a priori estimates established in Section \ref{Priori}, it follows from the Fatou lemma that
\begin{equation}\label{eq.uniform}
 \begin{split}
 &\|u^N(t)\|_{L^2(\RR^2)}^2+\big\|(E^N,B^N)\big\|^2_{\widetilde{L}^\infty_tL^2_{\rm log}(\RR^2)}+\int_0^t\|\nabla u^N(\tau)\|_{L^2(\RR^2)}^2\intd\tau\\
 &+\lambda_1\sup_{i\in\ZZ}2^{2i}\int_0^t\sum_{k\in\mathbb{Z}^2}\big\|\sqrt{\phi_{i,k}}u^{N}(\tau)\big\|^2_{L^2(\RR^2)}\intd\tau+\int_0^t\|j^N(\tau)\|_{L^2_{\rm log}(\RR^2)}^2\intd\tau\leq C,
 \end{split}
 \end{equation}
 where the constant $C$ does not depend on parameter $N$.

Letting $u^{M,N}=u^M-u^N, E^{M,N}=E^M-E^N$ and $B^{M,N}=B^M-B^N$, we see that the triple $(u^{M,N},E^{M,N},B^{M,N})$ solves the following system in $\RR^+\times\RR^2$:
\begin{equation}\label{eq.app-Cau}
\left\{\begin{array}{ll}
u_t^{M,N}+(u^{M}\cdot\nabla)u^{M,N}-\Delta u^{M,N}+\nabla p^{M,N}=j^M\times B^{M,N}+j^{M,N}\times B^N+(u^{M,N}\cdot\nabla)u^{N}\\
E_t^{M,N}-\mathrm{curl}\,B^{M,N}=-j^{M,N},\\
B_t^{M,N}+\mathrm{curl}\,E^{M,N}=0,\\
j^{M,N}=\sigma\left(E^{M,N}+u^{M,N}\times B^M+u^N\times B^{M,N}\right),\\
(u^{M,N},E^{M,N},B^{M,N})|_{t=0}=(S_{M+1}-S_{N+1})(u_0,E_0,B_0).
\end{array}\right.
\end{equation}
By the same argument in proving the uniqueness, we can infer that
\begin{equation*}
\begin{split}
&\big\|u^{M,N}(t)\big\|_{L^2}^2+\int_0^t\big\|\nabla u^{M,N}(\tau)\big\|^2_{L^2}\intd\tau+\sup_i2^{2i}\int_0^t\sum_{k\in\mathbb{Z}^2}\big\|\varphi_{i,k}u^{M,N}(\tau)\big\|^2_{L^2}\intd\tau\\
&+\|E^{M,N}\|^2_{\widetilde{L}^\infty_t \dot{B}^0_{2,2}}+\|B^{M,N}\|^2_{\widetilde{L}^\infty_t \dot{B}^0_{2,2}}+\int_0^t\|E^{M,N}(\tau)\|^2_{L^2}\intd\tau\\
\leq&C\int_0^t\|\nabla u^N(\tau)\|_{L^2}^2\|B^{M,N}(\tau)\|_{L^2}^2\intd\tau+C\int_0^t\big(\|j^N(\tau)\|^2_{L^2}+\|B^N(\tau)\|^2_{L^2}\big)\|B^{M,N}(\tau)\|^2_{L^2}\,\mathrm{d}\tau\\
&+C\sup_{q\in\mathbb{Z}}\int_0^t\big\|\dot{S}_qu^N(\tau)\big\|^2_{L^\infty}\,\mathrm{d}\tau\int_0^t\|E^{M,N}\tau)\|^2_{L^2}\,\mathrm{d}\tau+C\int_0^t\|\nabla{u}^N(\tau)\|^2_{L^2}\|u^{M,N}(\tau)\|^2_{L^2}\intd\tau\\
&+\big\|(u^{M,N}_0,E^{M,N}_0,B^{M,N}_0)\big\|_{L^2}^2.
\end{split}
\end{equation*}
Performing the Gronwall inequality and using the uniform estimate \eqref{eq.uniform}, we get
\begin{equation}\label{eq.Cauchy}
\begin{split}
&\big\|u^{M,N}(t)\big\|_{L^2}^2+\int_0^t\big\|\nabla u^{M,N}(\tau)\big\|^2_{L^2}\intd\tau+\sup_i2^{2i}\int_0^t\sum_{k\in\mathbb{Z}^2}\big\|\varphi_{i,k}u^{M,N}(\tau)\big\|^2_{L^2}\intd\tau\\
&+\|E^{M,N}\|^2_{\widetilde{L}^\infty_t \dot{B}^0_{2,2}}+\|B^{M,N}\|^2_{\widetilde{L}^\infty_t \dot{B}^0_{2,2}}+\int_0^t\|E^{M,N}(\tau)\|^2_{L^2}\intd\tau\\
\leq&\big\|(u^{M,N}_0,E^{M,N}_0,B^{M,N}_0)\big\|_{L^2}^2e^{Ct.}
\end{split}
\end{equation}
This implies that $\{(u^N,B^N,E^N)\}_{N=1}^\infty$ is a Cauchy sequence in the Banach space
\begin{equation*}
X:=\big(L^\infty([0,T];\,L^2)\cap L^2([0,T];\,\dot{H}^1)\big)\times L^\infty([0,T];\,L^2)\times L^\infty([0,T];\,L^2).
\end{equation*}
 Therefore, there exists a strong limit $(u,E,B)$ such that
 \begin{equation}\label{eq.sconver-u}
 u^N\to u \in L^\infty([0,T];\,L^2)\cap L^2([0,T];\,\dot{H}^1)\quad\text{as}\quad N\to\infty;
 \end{equation}
 \begin{equation}\label{eq.sconver-E}
 E^N\to E \in L^\infty([0,T];\,L^2) \quad\text{as}\quad N\to\infty;
 \end{equation}
  \begin{equation}\label{eq.sconver-B}
 B^N\to B \in L^\infty([0,T];\,L^2) \quad\text{as}\quad N\to\infty.
 \end{equation}
 Next, we want to show that
 \begin{equation}\label{eq.sconver-j}
 j^N\to j \in L^2([0,T];\,L^2) \quad\text{as}\quad N\to\infty.
 \end{equation}
 Note that $j^N=\sigma\left(E^N+u^N\times B^N\right)$. Since \eqref{eq.sconver-E} holds, we just need to show that
 \begin{equation*}
u^N\times B^N\to u\times B \in L^2([0,T];\,L^2) \quad\text{as}\quad N\to\infty.
 \end{equation*}
One writes
 \begin{equation*}
 u^N\times B^N-u\times B= (u^N-u)\times B^N+ u\times (B^N-B).
 \end{equation*}
 With the help of the Bony para-product decomposition and the H\"older inequality, we can show that
\begin{equation*}
\begin{split}
&\big\|(u^N-u)\times B^N\big\|_{L^2([0,T];\,L^2) }\\
&\leq C\|B^N\|_{L^\infty([0,T];\,L^2)}\Big(\|u^N-u\|_{L^2([0,T];\,\dot{H}^1)}+\sup_r\frac{1}{r^2}\int_0^t\sum_{k\in\mathbb{Z}^2}\big\|\varphi_{r,k}(u^{N}-u)(\tau)\big\|^2_{L^2}\intd\tau\Big).
\end{split}
\end{equation*}
This together with the uniform estimate \eqref{eq.uniform}, \eqref{eq.sconver-u} and estimate \eqref{eq.Cauchy} entails
\[\big\|(u^N-u)\times B^N\big\|_{L^2([0,T];\,L^2) }\to0\quad\text{as}\quad N\to\infty.\]
In the same way, we have
\[\big\|u\times (B^N-B)\big\|_{L^2([0,T];\,L^2) }\to0\quad\text{as}\quad N\to\infty.\]
Hence, we have the required convergence \eqref{eq.sconver-j}. The main task is now to show that $(u,E,B)$ is a solution of system \eqref{eq.MNS} in the sense of distribution. Let the vector $\omega\in\mathcal{S}(\RR^2)$ satisfying $\Div\omega=0$, and $\vartheta(t)\in\mathcal{D}([0,T))$. Then, we have
\begin{equation*}
\begin{split}
&\langle u^N(0),\omega\rangle\vartheta(0)+\int_0^T\langle u^N(t),\omega\rangle\vartheta(t)\intd t\\
&+\int_0^T\langle u^N,(u^N\cdot\nabla)\omega\rangle\theta(t)\intd t
 +\int_0^T\langle j^N\times B^N,\omega\rangle\theta(t)\intd t=0;
 \end{split}
 \end{equation*}
\begin{equation*}
\langle E^N(0),\omega\rangle\vartheta(0)+\int_0^T\langle B^N(t),\mathrm{curl}\,\omega\rangle\vartheta(t)\intd t=\int_0^T\langle j^N,\omega\rangle\vartheta(t)\intd t;
\end{equation*}
\begin{equation*}
\langle B^N(0),\omega\rangle\vartheta(0)-\int_0^T\langle E^N(t),\mathrm{curl}\,\omega\rangle\vartheta(t)\intd t=0,
\end{equation*}
where $\langle,\rangle$ denotes the standard $L^2$-inner product.

For the linear term, it is easy to show that, as $N\to\infty,$
\[\langle u^N(0),\omega\rangle\vartheta(0)+\int_0^T\langle u^N(t),\omega\rangle\vartheta(t)\intd t\to \langle u(0),\omega\rangle\vartheta(0)+\int_0^T\langle u(t),\omega\rangle\vartheta(t)\intd t;\]
\[
\langle E^N(0),\omega\rangle\vartheta(0)+\int_0^T\langle B^N(t),\mathrm{curl}\,\omega\rangle\vartheta(t)\intd t\to \langle E(0),\omega\rangle\vartheta(0)+\int_0^T\langle B(t),\mathrm{curl}\,\omega\rangle\vartheta(t)\intd t;\]
\[\langle B^N(0),\omega\rangle\vartheta(0)-\int_0^T\langle E^N(t),\mathrm{curl}\,\omega\rangle\vartheta(t)\intd t\to \langle B(0),\omega\rangle\vartheta(0)-\int_0^T\langle E(t),\mathrm{curl}\,\omega\rangle\vartheta(t)\intd t \]
and
\begin{equation*}
\int_0^T\langle j^N,\omega\rangle\vartheta(t)\intd t\to \int_0^T\langle j,\omega\rangle\vartheta(t)\intd t.
\end{equation*}
So, we need to show that as $N\to\infty,$
\begin{equation*}
\begin{split}
&\int_0^T\langle u^N,(u^N\cdot\nabla)\omega\rangle\theta(t)\intd t
 +\int_0^T\langle j^N\times B^N,\omega\rangle\theta(t)\intd t\\
 \to&\int_0^T\langle u,(u\cdot\nabla)\omega\rangle\theta(t)\intd t
 +\int_0^T\langle j\times B,\omega\rangle\theta(t)\intd t. \end{split}
\end{equation*}
A simple calculation yields
\begin{equation*}
\begin{split}
&\int_0^T\langle u^N,(u^N\cdot\nabla)\omega\rangle\theta(t)\intd t-\int_0^T\langle u,(u\cdot\nabla)\omega\rangle\theta(t)\intd t\\
=&\int_0^T\langle u^N- u,(u\cdot\nabla)\omega\rangle\theta(t)\intd t+\int_0^T\langle u,((u-u^N)\cdot\nabla)\omega\rangle\vartheta(t)\intd t.
\end{split}
\end{equation*}
By the H\"older inequality, one has
\begin{equation*}
\begin{split}
&\int_0^T\langle u^N- u,(u\cdot\nabla)\omega\rangle\vartheta(t)\intd t\\
\leq&\|\vartheta\|_{L^1(\RR^+)}\|\nabla\omega\|_{L^\infty(\RR^2)}\big\|u^N- u\big\|_{L^\infty([0,T];\;L^2(\RR^2))}\|u\|_{L^\infty([0,T];\;L^2(\RR^2))}.
\end{split}
\end{equation*}
This combined with the uniform estimate \eqref{eq.uniform} and \eqref{eq.sconver-u} leads to
\[\int_0^T\langle u^N- u,(u\cdot\nabla)\omega\rangle\vartheta(t)\intd t\to 0\quad\text{as}\quad N\to\infty.\]
Performing the same argument, we can obtain
\[\int_0^T\langle u,((u-u^N)\cdot\nabla)\omega\rangle\vartheta(t)\intd t\to0\quad\text{as}\quad N\to\infty.\]
Thus, we have
\[\int_0^T\langle u^N,(u^N\cdot\nabla)\omega\rangle\theta(t)\intd t\to\int_0^T\langle u,(u\cdot\nabla)\omega\rangle\theta(t)\intd t\quad\text{as}\quad N\to\infty.\]
Similarly, we have
\[\int_0^T\langle j^N\times B^N,\omega\rangle\theta(t)\intd t\to\int_0^T\langle j\times B,\omega\rangle\theta(t)\intd t\quad\text{as}\quad N\to\infty.\]
From above, we show that $(u,E,B)$ is a distributional solution of system \eqref{eq.MNS}.

Now, we begin to show the time continuity of solution. Since $u\in \widetilde{L}^\infty(\RR^+;\dot{B}^0_{2,2}(\RR^2))$, there exists a positive integer $N$ such that
\begin{equation}\label{eq.high-small}
\sum_{k\geq N}\|\Delta_k u\|^2_{L^\infty(\RR^+;\,L^2(\RR^2))}<\frac\varepsilon2.
\end{equation}
For all $t_1,\,t_2\in\RR^+$, we assume $t_2>t_1$ without lose of generality. By computations, one has
\begin{equation}\label{eq.meanvale}
\big\|S_N\big(u(t_2)-u(t_1)\big)\big\|_{L^2}\leq\int_{t_1}^{t_2}\|S_N\partial_tu(\tau)\|_{L^2}\intd\tau.
\end{equation}
Recall that
\[\partial_tu=\Delta u-\mathbb{P}\big((u\cdot\nabla) u+j\times B\big).\]
It follows form the Bernstein inequality that
\begin{equation*}
\begin{split}
\|S_N\partial_tu\|_{L^2(\RR^+;\,L^2)}\leq&\|S_N\Delta u\|_{L^2(\RR^+;\,L^2)}+\big\|\mathbb{P}S_N\big((u\cdot\nabla) u+j\times B\big)\big\|_{L^2(\RR^+;\,L^2)}\\
\leq&C2^{N}\|\nabla u\|_{L^2(\RR^+;\,L^2)}+C\big\|S_N\big((u\cdot\nabla) u+j\times B\big)\big\|_{L^2(\RR^+;\,L^2)}\\
\leq&C2^{N}\|\nabla u\|_{L^2(\RR^+;\,L^2)}+C2^{N}\big\|S_N\big((u\cdot\nabla) u+j\times B\big)\big\|_{L^2(\RR^+;\,L^1)}\\
\leq&C2^{N}\|\nabla u\|_{L^2(\RR^+;\,L^2)}+C2^{N}\|\nabla u\|_{L^2(\RR^+;\,L^2)}\|u\|_{L^\infty(\RR^+;\,L^2)}\\
&+C2^{N}\|j\|_{L^2(\RR^+;\,L^2)}\|B\|_{L^\infty(\RR^+;\,L^2)}<\infty.
\end{split}
\end{equation*}
Inserting this estimate into \eqref{eq.meanvale} leads to
\begin{equation*}
\big\|S_N\big(u(t_2)-u(t_1)\big)\big\|_{L^2}\leq C\|S_N\partial_tu\|_{L^2(\RR^+;\,L^2)}(t_2-t_1)^{\frac12}.
\end{equation*}
According to the low-high decomposition technique and \eqref{eq.high-small}, we obtain
\begin{equation*}
\begin{split}
\big\|\big(u(t_2)-u(t_1)\big)\big\|_{L^2}\leq&\big\|S_N\big(u(t_2)-u(t_1)\big)\big\|_{L^2}+2\sum_{k\geq N}\|\Delta_k u\|^2_{L^\infty(\RR^+;\,L^2(\RR^2))}\\
\leq& C(t_2-t_1)^{\frac12}+\varepsilon.
\end{split}
\end{equation*}
This implies $u(t)\in C(\RR^+;\,L^2(\RR^2))$. In the same way as used for $u$, we can obtain that  $E(t)\in C(\RR^+;\,L_{\rm log}^2(\RR^2))$ and  $B(t)\in C(\RR^+;\,L_{\rm log}^2(\RR^2))$.

Now, we begin to show the existence statement in Theorem \ref{THM-2}. By the compact argument, we know that system \eqref{eq.MNS} admits a unique global-in-time solution $(u,B,E)$. By Proposition \ref{prop-strong} and Corollary \ref{coro-L2LI}, we get from Fatou's lemma that $\|(E,B)\|_{\widetilde{L}^\infty_t L^2_{\rm\log}(\RR^2)}\leq C(t)$ and $u\in L^2_{\rm loc}(\RR^+;\;L^\infty(\RR^2)).$  Thus, we finish the proof of our theorems.
\begin{center}{\bf Appendix }\end{center}

\appendix
 \setcounter{section}{5}\setcounter{theorem}{0}\setcounter{equation}{0}
 In this appendix, we will give a lemma and a proposition which have been used in Section \ref{Priori}.
 \begin{lemma}[\cite{BCD11}, Lemma 2.35]\label{lemma.2.35}
For any positive $s$, there holds that
\[\sup_{t>0}\sum_{j\in\mathbb{Z}}t^{s}2^{2js}e^{-ct2^{2j}}<\infty.\]
\end{lemma}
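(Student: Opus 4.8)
The plan is to exploit the scale invariance of the summand. Since $t^{s}2^{2js}=(t2^{2j})^{s}$, each term equals $g(t2^{2j})$, where $g(x):=x^{s}e^{-cx}$ is a fixed function of the single variable $x$, bounded on $(0,\infty)$ and decaying to $0$ at both ends. Varying $t$ merely translates the effective summation index along the geometric grid $\{t2^{2j}\}_{j\in\mathbb{Z}}$ of ratio $4$, so I expect the sum to be controlled by a constant independent of $t$. To make this precise I would split the sum at the scale $t2^{2j}\approx1$: let $j^{*}=j^{*}(t)$ be the largest integer with $t2^{2j^{*}}\le1$.

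For the low-frequency part $j\le j^{*}$ I discard the exponential, using $e^{-ct2^{2j}}\le1$ together with $t2^{2j}=t2^{2j^{*}}\,2^{-2(j^{*}-j)}\le2^{-2(j^{*}-j)}$, which yields
\[
\sum_{j\le j^{*}}(t2^{2j})^{s}e^{-ct2^{2j}}\le\sum_{m\ge0}2^{-2ms}=\frac{1}{1-2^{-2s}},
\]
a finite constant since $s>0$. It is worth stressing that on this side it is the polynomial factor, not the exponential, that supplies the decay as $j\to-\infty$; this is why a crude bound by $e^{-ct2^{2j}/2}$ alone would fail.

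For the high-frequency part $j>j^{*}$, I write $b:=t2^{2(j^{*}+1)}\in(1,4]$ (this captures the non-integer part of the shift, using the maximality of $j^{*}$) and $m:=j-j^{*}-1\ge0$, so that $t2^{2j}=b\,4^{m}$. Then $(t2^{2j})^{s}\le4^{(m+1)s}$ and $e^{-ct2^{2j}}\le e^{-c4^{m}}$, whence
\[
\sum_{j>j^{*}}(t2^{2j})^{s}e^{-ct2^{2j}}\le\sum_{m\ge0}4^{(m+1)s}e^{-c4^{m}},
\]
which converges because the super-exponential decay $e^{-c4^{m}}$ dominates the geometric factor $4^{ms}$. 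Both bounds are independent of $t$, so taking the supremum over $t>0$ finishes the proof.

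The estimate is essentially routine, and the only point requiring genuine care — the sole "obstacle" — is the bookkeeping of the non-integer shift $\tfrac12\log_{2}(1/t)$ through the bounded quantity $b\in(1,4]$, which is what guarantees that the two constants truly do not depend on $t$. One could instead compare the sum with $\int_{0}^{\infty}x^{s}e^{-cx}\,\tfrac{dx}{x}=c^{-s}\Gamma(s)$, but since $g$ is not monotone the integral comparison would still require splitting around its maximum, so the direct dyadic decomposition above is the cleanest route.
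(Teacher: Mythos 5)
Your proof is correct, and every step checks out: $j^{*}(t)$ exists for each $t>0$ since $t2^{2j}\to 0$ as $j\to-\infty$ and $t2^{2j}\to\infty$ as $j\to+\infty$; the low-frequency sum is dominated by the geometric series $\sum_{m\ge0}2^{-2ms}$ (correctly using the polynomial factor, as the exponential supplies no decay there); and the high-frequency sum is dominated by $\sum_{m\ge0}4^{(m+1)s}e^{-c4^{m}}$, with the bounded quantity $b\in(1,4]$ handled cleanly so that both constants are uniform in $t$. Note, however, that the paper does not prove this lemma at all: it is quoted in the appendix directly from the reference [BCD11, Lemma 2.35] and then used to prove Proposition \ref{prop.negative}, so there is no in-paper argument to compare yours against. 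Your dyadic splitting around the critical scale $t2^{2j}\approx 1$ is the standard way to establish such statements (it is essentially the comparison of $\sum_{j}g(t2^{2j})$, $g(x)=x^{s}e^{-cx}$, with a $t$-independent series, which is also how the cited reference treats it), so your write-up serves as a correct, self-contained substitute for the citation.
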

\begin{proposition}\label{prop.negative}
Let $s$ be a positive real number and $(p,r)\in[1,\infty]^2$. Then there exists a constant $C>0$ such that
\begin{equation*}
C^{-1}\|u\|_{F\dot{B}^{-2s}_{p,r}}\leq\left\|t^{s}\big\|e^{-t|\xi|^2}\hat{u}\big\|_{L^p}\right\|_{L^r(\RR^+;\,\frac{\mathrm{d}t}{t})}\leq C \|u\|_{F\dot{B}^{-2s}_{p,r}}\quad\text{for all}\;\;u\in \mathcal{S}'_h.
\end{equation*}
\end{proposition}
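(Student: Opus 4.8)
The plan is to establish the two inequalities separately by reducing everything to the homogeneous Littlewood--Paley pieces and exploiting that on the dyadic annulus $\{|\xi|\sim 2^j\}$ the heat multiplier $e^{-t|\xi|^2}$ is comparable to the scalar $e^{-ct2^{2j}}$. This is the Fourier-side analogue of the classical heat-semigroup characterization of homogeneous Besov spaces (Bahouri--Chemin--Danchin, Theorem~2.34); writing $a_j:=\|\widehat{\dot{\Delta}_j u}\|_{L^p}$ and $g(t):=t^{s}\|e^{-t|\xi|^2}\hat u\|_{L^p}$, the whole point is that the norm $\|u\|_{F\dot{B}^{-2s}_{p,r}}$ equals the $\ell^r$ norm of the sequence $(2^{-2sj}a_j)_j$, so I only need to compare $\|g\|_{L^r(\RR^+;\frac{\mathrm{d}t}{t})}$ with this weighted sequence norm. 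The hypothesis $s>0$ will be used repeatedly to guarantee convergence of the relevant integrals and series.

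For the upper bound I would decompose $\hat u=\sum_j \widehat{\dot{\Delta}_j u}$ and use that $|\xi|\ge \tfrac34 2^j$ on $\mathrm{supp}\,\widehat{\dot{\Delta}_j u}$, so that $e^{-t|\xi|^2}\le e^{-ct2^{2j}}$ with $c=9/16$. This yields the pointwise bound $g(t)\le \sum_j \kappa(t2^{2j})\,b_j$, where $b_j:=2^{-2sj}a_j$ and $\kappa(\tau):=\tau^s e^{-c\tau}$. Passing to the multiplicative variable $t=2^{2x}$ turns the right-hand side into a genuine convolution on $\RR\times\ZZ$, and Young's inequality for such mixed convolutions gives $\|g\|_{L^r(\mathrm{d}t/t)}\le C\,\|(b_j)\|_{\ell^r}$, provided the $\ell^1$-type mass $\sum_{n\in\ZZ}\sup_{y\in[n,n+1]}\kappa(2^{2y})$ is finite; the super-exponential decay of $\kappa$ at $+\infty$ together with the factor $2^{2sy}$ (summable at $-\infty$ exactly because $s>0$) makes this mass finite. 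This is precisely the place where Lemma~\ref{lemma.2.35} supplies the required uniform bound.

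For the lower bound I would reverse the localization: on $\mathrm{supp}\,\widehat{\dot{\Delta}_j u}$ one has $\widehat{\dot{\Delta}_j u}(\xi)=\varphi(2^{-j}\xi)\,e^{t|\xi|^2}\bigl(e^{-t|\xi|^2}\hat u(\xi)\bigr)$, and since $|\xi|\le \tfrac83 2^j$ there, the multiplier $\varphi(2^{-j}\xi)e^{t|\xi|^2}$ is bounded by $Ce^{Ct2^{2j}}$. Hence $a_j\le Ce^{Ct2^{2j}}\|e^{-t|\xi|^2}\hat u\|_{L^p}$, i.e. $2^{-2sj}a_j\le C\,\bigl(2^{-2sj}e^{Ct2^{2j}}t^{-s}\bigr)\,g(t)$. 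Restricting $t$ to the dyadic window $I_j:=[2^{-2(j+1)},2^{-2j}]$ makes the bracketed prefactor uniformly bounded in $j$, so $2^{-2sj}a_j\le C\,g(t)$ for every $t\in I_j$. Taking the $L^r(I_j;\frac{\mathrm{d}t}{t})$ norm (each window having fixed $\frac{\mathrm{d}t}{t}$-measure $2\ln 2$), raising to the $r$-th power and summing over $j$ — the windows $I_j$ being disjoint and covering $\RR^+$ — gives $\|(2^{-2sj}a_j)\|_{\ell^r}\le C\,\|g\|_{L^r(\mathrm{d}t/t)}$, which is the desired estimate; the case $r=\infty$ follows by replacing sums with suprema throughout.

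The main obstacle is the summability exponent. The naive triangle inequality in the upper bound only produces the $\ell^1$ sum $\sum_j 2^{-2sj}a_j$ (i.e. the stronger $F\dot{B}^{-2s}_{p,1}$ norm), which fails to give the wanted $\ell^r$ control for $r>1$; recovering the correct $\ell^r$ norm is exactly what forces the convolution/Young argument above rather than a crude term-by-term estimate, and it is here that the scaling-invariant measure $\frac{\mathrm{d}t}{t}$ plays its role. A secondary technical point is that the reconstructions $\hat u=\sum_j\widehat{\dot{\Delta}_j u}$ and $u=\sum_j\dot{\Delta}_j u$ must be justified, which is legitimate precisely because $u\in\mathcal{S}_h'$; this is where that hypothesis enters.
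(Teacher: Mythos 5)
Your proof is correct, and it reaches the two inequalities by routes that differ from the paper's — mildly in the upper bound, genuinely in the lower bound. For the upper bound you and the paper begin identically (decompose $\hat u=\sum_j\widehat{\dot\Delta_j u}$, bound the heat multiplier by $e^{-ct2^{2j}}$ on each annulus), but where the paper recovers $\ell^r$ summability by a weighted H\"older inequality in $j$, Fubini, and Lemma \ref{lemma.2.35} (the $\frac{\mathrm{d}t}{t}$ integral of $t^{s}2^{2js}e^{-ct2^{2j}}$ producing $\Gamma(s)$), you pass to the multiplicative variable $t=2^{2x}$ and invoke Young's inequality for the resulting convolution; these mechanisms are essentially equivalent — the paper's H\"older--Fubini computation is in effect a proof of the Young inequality you cite — so this difference is cosmetic. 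The lower bound is where you truly depart. The paper reconstructs $\widehat{\dot\Delta_j u}$ via the Gamma-function reproducing identity $\widehat{\dot{\Delta}_ju}=\frac{1}{\Gamma(s+1)}\int_0^\infty t^{s}|\xi|^{2(s+1)}e^{-t|\xi|^2}\widehat{\dot{\Delta}_ju}\,\mathrm{d}t$, then applies H\"older in $t$ with the weight $e^{-ct2^{2j}}$ and Fubini together with Lemma \ref{lemma.2.35}, which is the classical semigroup characterization argument for homogeneous Besov spaces transplanted to the Fourier side. You instead invert the heat multiplier pointwise on the annulus, writing $\widehat{\dot\Delta_j u}=\varphi(2^{-j}\xi)e^{t|\xi|^2}\bigl(e^{-t|\xi|^2}\hat u\bigr)$ and using $\sup_{|\xi|\le\frac{8}{3}2^j}e^{t|\xi|^2}\le e^{Ct2^{2j}}$, then restrict $t$ to the dyadic window $[2^{-2(j+1)},2^{-2j}]$, where the prefactor $2^{-2sj}t^{-s}e^{Ct2^{2j}}$ is bounded uniformly in $j$, and sum over the disjoint windows covering $\RR^+$. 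Your route is more elementary — no reproducing identity, no Fubini exchange — and it exploits exactly what is special about Fourier--Herz norms: both the frequency cut-off and the heat flow act by pointwise multiplication on $\hat u$, so ``undoing'' the heat flow costs only an $L^\infty$ bound on a compactly supported multiplier. The paper's argument is the one that survives in the physical-space Besov setting, where such pointwise inversion is unavailable and one must use the Gamma identity or kernel estimates; as a by-product the paper's proof also fixes nothing your proof misses — both correctly use $s>0$ for convergence and $u\in\mathcal{S}'_h$ for the reconstruction $\hat u=\sum_j\widehat{\dot\Delta_j u}$.
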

\begin{proof}
According to the support property of $\varphi_j$, we see that
\[\big\|t^s\varphi_je^{-t|\xi|^2}\hat{u}\big\|_{L^p}\leq Ct^{s}2^{2js}e^{-ct2^{2j}}2^{-2js}\big\|\widehat{\dot{\Delta}_ju}\big\|_{L^p}.\]
Using the fact that $u\in\mathcal{S}'_h$ and the definition of the homogeneous Fourier-Herz spaces, we have
\begin{equation*}
\begin{split}
t^{s}\big\|e^{-t|\xi|^2}\hat{u}\big\|_{L^p}\leq& \sum_{j\in\mathbb{Z}}\big\|t^s\varphi_je^{-t|\xi|^2}\hat{u}\big\|_{L^p}\\
\leq&C\|u\|_{F\dot{B}^{-2s}_{p,r}}\sum_{j\in\mathbb{Z}}t^{s}2^{2js}e^{-ct2^{2j}}c_{r,j},
\end{split}
\end{equation*}
where $\|c_{r,j}\|_{\ell^r}=1$.

If $r=\infty$, then the inequality readily follows from Lemma \ref{lemma.2.35}.

If $r<\infty$, then using the H\"older inequality and Lemma \ref{lemma.2.35}, we obtain
\begin{equation*}
\begin{split}
\int_0^\infty t^{rs}\big\|e^{-t|\xi|^2}\hat{u}\big\|^r_{L^p}\frac{\mathrm{d}t}{t}
\leq&C\|u\|^r_{F\dot{B}^{-2s}_{p,r}}\int_0^\infty\Big(\sum_{j\in\mathbb{Z}}t^{s}2^{2js}e^{-ct2^{2j}}c_{r,j}\Big)^r\,\frac{\mathrm{d}t}{t}\\
\leq&C\|u\|^r_{F\dot{B}^{-2s}_{p,r}}\int_0^\infty\Big(\sum_{j\in\mathbb{Z}}t^{s}2^{2js}e^{-ct2^{2j}}\Big)^{r-1}\Big(\sum_{j\in\mathbb{Z}}t^{s}2^{2js}e^{-ct2^{2j}}c^r_{r,j}\Big)\,\frac{\mathrm{d}t}{t}\\
\leq&C\|u\|^r_{F\dot{B}^{-2s}_{p,r}}\int_0^\infty \sum_{j\in\mathbb{Z}}t^{s}2^{2js}e^{-ct2^{2j}}c^r_{r,j} \,\frac{\mathrm{d}t}{t}.
\end{split}
\end{equation*}
Using Fubini's theorem, one infers that
\begin{equation*}
\begin{split}
\int_0^\infty t^{rs}\big\|e^{-t|\xi|^2}\hat{u}\big\|^r_{L^p}\frac{\mathrm{d}t}{t}
\leq& C\|u\|^r_{F\dot{B}^{-2s}_{p,r}}\sum_{j\in\mathbb{Z}}c^r_{r,j}\int_0^\infty t^{s}2^{2js}e^{-ct2^{2j}} \,\frac{\mathrm{d}t}{t}\\
\leq&C\Gamma(s)\|u\|^r_{F\dot{B}^{-2s}_{p,r}}\quad\text{with}\quad\Gamma(s)=\int_0^\infty t^{s-1}e^{-t}\,\mathrm{d}t.
\end{split}
\end{equation*}
To prove the other inequality, we use the following identity
\begin{equation*}
\widehat{\dot{\Delta}_ju}=\frac{1}{\Gamma(s+1)}\int_0^\infty t^{s}|\xi|^{s+1}e^{-t|\xi|^2}\widehat{\dot{\Delta}_ju}\,\mathrm{d}t
\end{equation*}
Since $e^{-t|\xi|^2}=e^{-\frac{t}{2}|\xi|^2}e^{-\frac{t}{2}|\xi|^2}$, we have
\begin{equation*}
\begin{split}
\big\|\widehat{\dot{\Delta}_ju}\big\|_{L^p}\leq &C\int_0^\infty t^{s}2^{2j(s+1)}e^{-ct2^{2j}}\Big\|\widehat{\dot{\Delta}_je^{-\frac{t}{2}\Delta}u}\Big\|_{L^p}\,\mathrm{d}t\\
\leq &C\int_0^\infty t^{s}2^{2j(s+1)}e^{-ct2^{2j}}\big\|\widehat{e^{-t\Delta}u}\big\|_{L^p}\,\mathrm{d}t.
\end{split}
\end{equation*}
If $r=\infty$, then we have
\begin{equation*}
\begin{split}
\big\|\widehat{\dot{\Delta}_ju}\big\|_{L^p}\leq & C\bigg(\sup_{t>0}t^{s}\big\|\widehat{e^{-t\Delta}u}\big\|_{L^p}\bigg)\int_0^\infty2^{2j(s+1)}e^{-ct2^{2j}}\,\mathrm{d}t\\
\leq&C2^{2js} \bigg(\sup_{t>0}t^{s}\big\|\widehat{e^{-t\Delta}u}\big\|_{L^p}\bigg).
\end{split}
\end{equation*}
If $r<\infty$, we write
\[\sum_{j\in\mathbb{Z}}2^{-2jsr}\big\|\widehat{\dot{\Delta}_ju}\big\|_{L^p}^r\leq C\sum_{j\in\mathbb{Z}}2^{2jr}\bigg(\int_0^\infty t^se^{-ct2^{2j}}\big\|\widehat{e^{-t\Delta}u}\big\|_{L^p}\,\mathrm{d}t\bigg)^r.\]
The H\"older inequality with the weight $e^{-ct2^{2j}}$ implies that
\begin{equation*}
\begin{split}
 \bigg(\int_0^\infty t^se^{-ct2^{2j}}\big\|\widehat{e^{-t\Delta}u}\big\|_{L^p}\,\mathrm{d}t\bigg)^r
\leq&\bigg(\int_0^\infty e^{-ct2^{2j}}\,\mathrm{d}t\bigg)^{r-1}\int_0^\infty t^{rs}e^{-ct2^{2j}}\big\|\widehat{e^{-t\Delta}u}\big\|^r_{L^p}\,\mathrm{d}t \\
\leq&C2^{-2j(r-1)}\int_0^\infty t^{rs}e^{-ct2^{2j}}\big\|\widehat{e^{-t\Delta}u}\big\|^r_{L^p}\,\mathrm{d}t.
\end{split}
\end{equation*}
By resorting to Lemma \ref{lemma.2.35} and the Fubini theorem, we readily get
\begin{equation*}
\begin{split}
\sum_{j\in\mathbb{Z}}2^{-2jsr}\big\|\widehat{\dot{\Delta}_ju}\big\|_{L^p}^r\leq&C\sum_{j\in\mathbb{Z}}2^{-2j}\int_0^\infty t^{rs}e^{-ct2^{2j}}\big\|\widehat{e^{-t\Delta}u}\big\|^r_{L^p}\,\mathrm{d}t\\
\leq&C\int_0^\infty\bigg(\sum_{j\in\mathbb{Z}}t2^{-2j}e^{-ct2^{2j}}\bigg)t^{rs}\big\|\widehat{e^{-t\Delta}u}\big\|^r_{L^p}\,\frac{\mathrm{d}t}{t}\\
\leq&C\int_0^\infty t^{rs}\big\|\widehat{e^{-t\Delta}u}\big\|^r_{L^p}\,\frac{\mathrm{d}t}{t}.
\end{split}
\end{equation*}
The proposition is thus proved.
\end{proof}

\end{document}